\documentclass[10pt, 3p, times, reqno, a4paper]{amsart}
\usepackage[right=2 cm,left=2 cm,top=1.7 cm,bottom=1.7 cm]{geometry}
\usepackage[utf8]{inputenc}
\usepackage{amsmath,amsxtra,amssymb,latexsym,amscd,amsthm,amsfonts,amstext,upgreek,dsfont,eufrak,cite,amsbsy,amssymb,mathrsfs,float}
\usepackage{color}
\newcommand {\norm}[1] {\left\| #1 \right\|}

\newcommand {\Bnorm}[1] {\Big\| #1 \Big\|}

\newcommand{\parentheses}[1]{\left(#1\right)}\newcommand{\sqrbrackets}[1]{\left[#1\right]}

\def\dis{\displaystyle}
\def\N{\mathbb{N}}

\def\R{\mathbb{R}}

\def\sumuse{\sum_{j=1}^{\infty}}
\def\.{\cdot}
\def\Lp{L^p(\R^N)}
\def\Lq{L^q(\R^N)}

\def\Lh{L^h(\R^N)}
\def\Lep{L^{\Xi}(\R^N)}
\def\Lpx{L^p_{x}}

\def\Linftyx{L^{\infty}_{x}}

\def\0Lep{L^{\Xi}_{0}(\R^N)}
\def\Linfty{L^{\infty}(\R^N)}
\def\p{u}

\newcommand{\opnorm}[1] {\left|\mkern-1.5mu\left|\mkern-1.5mu\left| #1 \right|\mkern-1.5mu\right|\mkern-1.5mu\right|}

\usepackage{hyperref}
\usepackage{verbatim}

\usepackage{multicol,graphicx}
\usepackage{array,multirow}
\usepackage{enumerate,booktabs}
\usepackage{stackengine}

\allowdisplaybreaks
\newcommand{\vertiii}[1]{{\left\vert\kern-0.15ex\left\vert\kern-0.15ex\left\vert #1 \right\vert\kern-0.15ex\right\vert\kern-0.15ex\right\vert}}

\newtheorem{theorem}{{\bf Theorem}}[section]
\theoremstyle{definition} \newtheorem{definition}[theorem]{\bf Definition}

\theoremstyle{plain} \newtheorem{lemma}[theorem]{Lemma}
\newtheorem{proposition}{Proposition}[section]

\newtheorem{remark}{Remark}[section]

\newcommand{\al}{\alpha}

\newcommand{\ep}{\epsilon}

\def\nn{\nonumber}

\def\bq{\begin{equation}}
\def\eq{\end{equation}}
\def\bqq{\begin{eqnarray*}}
	\def\eqq{\end{eqnarray*}}

\usepackage{extarrows}

\title[Well-posed results for time fractional biharmonic equation]{On the initial value problem for a class of nonlinear biharmonic equation  with time-fractional derivative}

\author[N.A. Tuan]{Anh Tuan Nguyen}
\address[N.A. Tuan]{Division of Applied Mathematics,  Thu Dau Mot University, Binh Duong Province, Vietnam}
\email{nguyenanhtuan@tdmu.edu.vn}

\author[T. Caraballo]{Tomás Caraballo}
\address[T. Caraballo]{Departamento de Ecuaciones Diferenciales y Análisis Numérico C/ Tarfia s/n, Facultad de Matemáticas, Universidad de Sevilla, Sevilla 41080, Spain}
\email{caraball@us.es}

\author[N.H. Tuan]{Nguyen Huy Tuan$^*$}
\address[N.H. Tuan]{Department of Mathematics and Computer Science, University of Science, Ho Chi Minh City, Vietnam; and Vietnam National University, Ho Chi Minh City, Vietnam}
\email{nhtuan@hcmus.edu.vn}

\thanks{$^*$Corresponding author: Nguyen Huy Tuan (nhtuan@hcmus.edu.vn).}

\begin{document}
\mdseries 	
\begin{abstract}
In this work, we investigate the IVP  for a time-fractional fourth-order equation with nonlinear source terms. More specifically, we consider the time-fractional biharmonic with exponential nonlinearity and the time-fractional Cahn-Hilliard equation. By using the Fourier transform concept, the generalized formula for the mild solution as well as the smoothing effects of resolvent operators are proved. For the IVP associated with the first one, by using the Orlicz space with the function $ \Xi(z)=e^{|z|^p}-1$ and some embeddings between it and the usual Lebesgue spaces, we prove that the solution is a global-in-time solution or it shall blow up in a finite time if the initial value is regular. In the case of singular initial data, the local-in-time/global-in-time existence and uniqueness are derived. Also, the regularity of the mild solution is investigated. For the IVP associated with the second one, some modifications to the generalized formula are made to deal with the nonlinear term. We also establish some important estimates for the derivatives of resolvent operators, they are the basis for using the Picard sequence to prove the local-in-time existence of the solution.
		
\vspace*{0.1cm}
		
\noindent {\bf Keywords:} Time-fractional, biharmonic equations, fourth order, Cahn-Hilliard equations, well-posedness, global existence, local existence, exponential nonlinearity.\\[2mm]
		{\bf 2010 MSC:  26A33, 33E12, 35B40, 35K30, 35K58}  
	\end{abstract}

	\maketitle

\section{Introduction}
Our objective in this paper is to study the following initial value problem associated with the time-fractional derivative with biharmonic operator
\begin{align}\label{GeneralPro}\tag{P}
\begin{cases}
\dis \partial^\al_{0|t}  u(t,x)  + \Delta^2 u(t,x)=G(t,x,u),&\quad  \mbox{in}\quad\R^+\times\R^N, \\ 
\dis u(0,x)= u_0(x), & \quad \mbox{in} \quad \R^N,
\end{cases}
\end{align}
where $0<\al<1$ and, for an absolutely continuous in time function $ w $, the definition the Caputo time-fractional derivative operator $\partial^\al_{0|t}$ is introduced in \cite{Rev1} as follows
\begin{equation}\label{Caputo_Def}
\partial^\al_{0|t} w(t)= \frac{1}{\Gamma(1-\al)} \int_0^t (t-s)^{-\al} \frac{dw}{ds} ds,
\end{equation}
here, we assume that the integration makes sense $\Gamma$ is the Gamma function. The main equation in \eqref{GeneralPro} contains the biharmonic operator $\Delta ^2$ which is often called higher-order parabolic equations
and began to receive widespread attention for their surprising and unexpected properties.  More importantly, the higher-order parabolic equations can be used to model many problems in applications, namely, the study of weak interactions of dispersive waves, the theory of combustion, the phase transition, the higher-order diffusion... 
The most common higher-order parabolic equation is probably the polyharmonic heat equation, especially, the fourth-order heat equation or also called the biharmonic heat equation.
The present paper only considers the source function $ G $ to be in two nonlinear cases:  the exponential nonlinear type and the Cahn-Hilliard equation form.

\subsection{{Fractional partial differential equations}}
Over the last decades, the theory of fractional derivatives has been well developed. As a result, the fractional partial differential equations (FPDEs) have also been studied more and more widely. These new kinds of PDEs have many unexpected properties and numerous applications in many applied and theoretical fields of science and engineering. This is why many researchers have shown a big interest in the study of  FPDEs. Recently, there are many interesting works concerning diffusion equations with non-local time derivatives and time-fractional derivatives.  We can refer the reader to some interesting papers on FPDEs, for example, T. Caraballo et al. \cite{Tomas1,Tomas2}, {R. Zacher \cite{Rev6,Rev7}, V. Vespri et al. \cite{Rev2}},  E. Nane et al. \cite{Erkan, Erkan1}, P.N. Carvalho and G. Planas \cite{Planas}, H. Dong and D. Kim \cite{Dong, Dong1}, N.H. Tuan et al. \cite{Tuan, Tuan1} and references therein. {We especially consider the interesting works \cite{Rev3},\cite{Rev4},\cite{Rev5} because our views in approaching the problem are somewhat similar to theirs. Indeed, the works \cite{Rev3},\cite{Rev4} study time-fractional problems with second order differential operators through the fundamental solution. While in \cite{Rev3}, Dipierro and  co-authors  establish existence and uniqueness for the solution in an appropriate functional space,  and in \cite{Rev4}, Zacher et al.  consider decay estimates of the solution. The work \cite{Rev5} investigates the same problem as \cite{Rev4} but the results are provided in a bounded domain of $ \R^N $ and applied to investigate some specific examples corresponding to their kernel $ k $. When considered these works, we found some important remarks about appropriate functional spaces to study the fundamental solution (Remark \ref{Rev suggestion}). Motivated by those, we made some detailed observations for ours. On the other hand, our main contributions are the results for models with the biharmonic operators whose properties are somewhat different from the second order ones. Furthermore, we focus on studying specific effects of different types of nonlinearity to our mild solution. To provide a clearer view, we will present specific discussions below.}
\subsection{Discussion on Problem \eqref{GeneralPro} with exponential nonlinearity }
The source function $ G $ is the exponential nonlinearity satisfying $ G(0)=0 $ and
\begin{align}\label{Property of J}
\left|G(u)-G(v)\right|&\le L|u-v|\parentheses{|u|^{m-1}e^{\kappa|u|^p}+|v|^{m-1}e^{\kappa|v|^p}},
\end{align}
for every $ u,v\in\R, m>2\text{~or~}m=1,p>1 $ and $ L $ is a positive constant independent of $ u,v. $  In the following, we will discuss  in more detail why we chose this function $G$ as in \eqref{Property of J}. 

\noindent$\bullet$ \textit{In terms of mathematical theory}: It was common knowledge that when we consider the IVP for the classical Schr\"odinger equations with the polynomial nonlinearity $ u|u|^{p-1},~p\in(1,\infty) $ and a initial data function in $ H^s(\R^N),~s\in[0,N/2) $, the value $\mathbf{\bar{c}}=1+4(N-2s)^{-1}$ is called the critical exponent. Then, the power case $ p $ of the nonlinear function is equal to (respectively less than) $ \mathbf{\bar{c}} $ is called the critical case (respectively subcritical case). However, when considering the functional space $ H^{\frac{N}{2}}(\R^N) $, the critical value $ \mathbf{\bar{c}} $ will be larger than any power exponent of the polynomial nonlinearity. Hence, the nonlinear functions of exponential type grow higher than any kind of a power nonlinearity at infinity and also vanishes like a power at zero, which can be seen as the critical nonlinearity of this case. This is also one of the reasons why exponential nonlinearity has been studied by many mathematicians not only for both the Schr\"odinger equation and some other types of PDEs. To provide an overview of this kind of nonlinearity, let us recall some related works. The framework introduced above is based on results in \cite{Nakamura-Ozawa}. In this work, Nakamura and Ozawa study the small data global $ H^{N/2}(\R^N)-$solution of the IVP for the Schr\"odinger equations with the exponential nonlinearity. The IVP for heat equations with this type of nonlinearity was considered in \cite{Ioku1}. In \cite{Ioku1}, under the smallness assumption on the initial data in the Orlicz space, Ioku has shown the existence of a global-in-time solution of the semilinear heat equations. Under the smallness condition of initial data, decay estimates and the asymptotic behavior for global-in-time solutions of a semilinear heat equation with the nonlinearity given by $ f(u)=|u|^{4/N}ue^{u^2} $ was investigated in \cite{Kawakami Et al.} by Furioli et al. For more results about the exponential nonlinearity, we refer the reader to Bartolucci et al. \cite{Ponce}, Petropoulou \cite{Petropoulou}, Duong \cite{D.V.Duong}, Ioku \cite{Ioku2} and the references therein.

\noindent $ \bullet $ \textit{In terms of application}: These exponential nonlinearities as in \eqref{Property of J} are not only investigated for the nonlinear Schr\"odinger equations but also for other types of PDEs because of many applications in phenomena modeling. Let us mention two well-known applications in combustion theory as follows. The first one is the IVP for the equation $ u_t-\Delta u=ke^u $, it can be used to model the ignition solid fuel. The second one is the description of the small fuel loss steady-state model by the IVP associated with equation $ -\Delta u=ke^{\frac{u}{1+\varepsilon u}}.$ More applications and details can be found in \cite{Combustion Theory}, Kazdan, Warner \cite{diff geometry} and references therein.

\noindent $ \bullet $ \textit{Contributions, challenges and novelties}:
To the best of our knowledge, FPDEs with nonlinearities of exponential type have not been studied yet.  Our work can be seen as one of the first results in this topic.  Due to the nonlinearity of exponential type, it is not possible to apply $L ^p-L^q$ estimates of some previous works \cite{And, And1} to our current problem.  In contrast to the case $ s>N/2, $ the embedding $ \Linfty\hookleftarrow H^{N/2} $ is not true, and in view of  Trudinger--Moser's inequality, we obtain the embeddings $ H^{N/2}(\R^N)\hookrightarrow\Lep\hookrightarrow\Lq $ for any $ p\le q<\infty, $ where $ \Lep $ is the Orlicz space with the function $ \Xi(z)=e^{|z|^p}-1 $ (see Definition \ref{Main_Orlicz_Space}).  To deal with the exponential type,  we shall use the Orlicz space and the embeddings between it and the usual Lebesgue spaces.  However, since our problem is considered in the whole space $ \R^N $, the embedding $ \Lq\hookrightarrow\Lp $ does not hold anymore when $ q>p $.  For Orlicz spaces in classical derivatives, the use of the standard smoothing effect of the exponential resolution and Taylor expansion operators play important roles. However, they are not available for problems with time-fractional derivatives.    
The appearance of the Mittag-Leffler function and the Gamma function also caused a lot of difficulties in setting up some needed estimates related to the Orlicz space for Problem \eqref{GeneralPro}. 
Fortunately, thanks to the results shown in \cite{Mainardi}, the standard smoothing effect can be achieved with a presentation via the $M$--Wright function of the Mittag-Leffler function. We can also overcome some of the difficulties caused by the Gamma function with some special inequalities. 
Our main results in this section are briefly described as follows 
\begin{itemize}
\item  In the regular case of $ u_0~(u_0\in\Lp\cap {C_0(\R^N)}), $ we can derive that our mild solution blows up at a finite time or the maximal time that ensures the unique existence of the solution is infinity. 
\item  With the assumption of the initial value in the space $ \Lep $, the local-in-time existence of mild solution can be obtained by a fixed point argument without any smallness assumptions on the initial function. Furthermore, under the stronger assumption that $ u_0\in\0Lep $, the global well-posed results for the solution will be established. To achieve this goal, we have to use the techniques introduced by Y. Chen et al. \cite{Atienza}, and weighted spaces to deal with the singular term of the mild solution. 
 \end{itemize}
\subsection{Discussion on  Problem \eqref{GeneralPro} with Cahn-Hilliard source term  }
In this subsection,  we introduce and discuss  Problem \eqref{GeneralPro} with another source  $ G(u)\equiv\Delta F(u) $, here $F$  denotes the derivative of double-well potential; in general, we consider a cubic polynomial like
$
F(u)=u^3-u.
$
For the case $\al=1$, Problem \eqref{GeneralPro}  is
reduced to the standard  Cahn-Hilliard equation.   The Cahn–Hilliard equation was proposed for the first time by J. W. Cahn and J. E. Hilliard \cite{1}, and is one of the most often studied problems
of mathematical physics, 
which describes the process of phase separation of
a binary alloy below the critical temperature.  More recently it has appeared in
nano-technology, in models for stellar dynamics, as well as in the theory of galaxy formation
as a model for the evolution of two components of inter-galactic material (see \cite{Blow}). Let us mention some previous works on the standard Cahn-Hilliard equations with derivatives of integer order.  In \cite{Abel}, the authors considered a Cahn-Hilliard equation which is the conserved gradient flow of a nonlocal total free energy functional.  Bosch and Stoll \cite{Bos} proposed a fractional inpainting model based on a fractional-order vector-valued Cahn--Hilliard equation, L.A. Caffarelli,  N.E. Muler \cite{Ca}.  We can list many classical papers related to the study of Cahn–Hilliard equation, see, e.g., Dlotko \cite{Dlotko}, Temam \cite{Temam}, Akagi \cite{Akagi}, Zelik \cite{Zelik1,Zelik2} and the references therein. 

\noindent $ \bullet $ \textit{Contributions, challenges and novelties}: As we mentioned above, when we consider the FPDEs in $ \R^N $, the relationship between two spaces $ \Lp $ and $ \Lq $ with $ p\neq q $ is not fulfilled. Specifically, one of the greatest challenges, when we investigate the Cahn-Hilliard equations, is to deal with a nonlinearity of the form $ G(u)=\Delta F(u) $. Due to the appearance of the Laplacian, when handling the existence and the uniqueness of the mild solution by the successive approximations method and Young's convolution inequality, the property of $ d/dx\parentheses{f(x)*g(x)} $ needs to be applied to make the second-order derivative of the solution representation operator appear. This novelty in this case is setting up the key estimates as in Lemma \ref{lemqt}. It is worth noting that even when we have the main tools available, it is still not an easy task to prove our desired existence results. Besides, when finding the regularity results, we have to estimate the higher-order derivative of the solution representation operators. 
By learning about the results and techniques of  \cite{Luca} and \cite{Liu}, we found a way to obtain the local-in-time existence and uniqueness result for  Problem \eqref{GeneralPro}  with the Cahn-Hilliard source. The  global-in-time existence of the solution is a difficult topic and will probably be studied in a forthcoming work. The section about the time-fractional Cahn-Hilliard equation in this work includes:
\begin{itemize}
\item To find the solution representation by the Fourier transform and some related properties;
\item To establish some useful linear estimates;
\item To prove the existence, uniqueness and regularity of local-in-time solution by using the Picard sequence method and the smallness assumption for the initial data function.
\end{itemize}

\subsection{The outline} In Section 2, we demonstrate an approach to present the formula of the mild solution and, based on it, we establish some important linear estimates. Also in this section, we introduce some notations and definitions related to the so-called Orlicz space, a generalization of
Lebesgue spaces and some useful embeddings between them.  In Section 3, we investigate Problem \eqref{GeneralPro} with the exponential nonlinearities under two separate assumptions on the initial datum function. In particular, for the first assumption, we show that the mild solution exists on $ [0,\infty) $ or blows up in a finite time. The local existence and the global-in-time well-posedness of the solution will be stated under the second assumption of the initial function. The main results on the problem with the second type of nonlinearity source function will be analyzed in Section 4. In general, by using the smallness assumption on the initial function, we derive the local-in-time existence and uniqueness of the mild solution for the IVP associated with the time-fractional Cahn-Hilliard equation. Furthermore, the regularity result will also be proved.

\section{Preliminaries}
\subsection{Generalized mild solution}
It is well known that for the following IVP involving a classical homogeneous biharmonic equation
\begin{align*}
\begin{cases}
\partial_t \varphi(t,x)+\Delta^2 \varphi(t,x)=0,\qquad&\text{in}\quad\R^+\times\R^N,\\
\varphi(t,x)=\varphi_0(x),&\text{in}\quad\{0\}\times\R^N,
\end{cases}
\end{align*}  
the solution is given by
\begin{align*}
\varphi(t,x)=\sqrbrackets{\mathcal{F}^{-1}\parentheses{e^{-t|\xi|^4}}*\varphi_0}(x)=\sqrbrackets{(2\pi)^{\frac{-\N}{2}}\int_{\R^N}e^{i< \xi,x>-t|\xi|^4}d\xi}*\varphi_0(x),
\end{align*}
where the Fourier transform and its inverse are denoted by $ \mathcal{F},\mathcal{F}^{-1}, $ respectively, and
$
<\xi,x>= \sum_{j=1}^N \xi_j x_j,~(\xi,x)\in\R^{2N}.
$
We recall the following lemma for the kernel $ \mathscr K (t,x)= \mathcal F^{-1} (e^{-t|\xi|^4} ). $ 
\begin{lemma}\label{Pre Lp-Lq estimate}
Suppose that $p\ge1$. Then, for any $ t>0 $ we have
\begin{align*}
\|\mathscr{K}(t )\|_{L^p}\le c_p t^{-\frac{N}{4}(1-\frac{1}{p})},
\qquad\|D^m \mathscr{K}(t )\|_{L^p}\le c_{p,m} t^{-\frac{N}{4}(1-\frac{1}{p})-\frac{m}{4}}. 
\end{align*}
\end{lemma}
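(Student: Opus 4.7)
The plan is to prove both bounds by a scaling argument, reducing the problem at time $t$ to the same problem at time $1$, and then verifying that the endpoint $\|\mathscr{K}(1)\|_{L^p}$ (and $\|D^m\mathscr{K}(1)\|_{L^p}$) are finite because $e^{-|\xi|^4}$ is a Schwartz function.

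First I would establish the scaling identity. Starting from
\[
\mathscr{K}(t,x) = (2\pi)^{-N/2}\int_{\R^N} e^{i\langle \xi,x\rangle - t|\xi|^4}\, d\xi,
\]
the change of variable $\xi = t^{-1/4}\eta$ (with Jacobian $t^{-N/4}$) yields
\[
\mathscr{K}(t,x) = t^{-N/4}\,\mathscr{K}\!\bigl(1,\, t^{-1/4}x\bigr).
\]
Differentiating before the change of variable brings down a factor $(i\xi)^\beta$ for each multi-index $\beta$ with $|\beta|=m$, and the same substitution then gives
\[
D^m\mathscr{K}(t,x) = t^{-N/4 - m/4}\,D^m\mathscr{K}\!\bigl(1,\, t^{-1/4}x\bigr).
\]

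Next I would take the $L^p$ norm of both sides and use the further change of variable $y = t^{-1/4}x$ (with $dx = t^{N/4}dy$). For the first identity this gives
\[
\|\mathscr{K}(t)\|_{L^p}^p = t^{-Np/4}\cdot t^{N/4}\,\|\mathscr{K}(1)\|_{L^p}^p = t^{-\frac{N}{4}(p-1)}\|\mathscr{K}(1)\|_{L^p}^p,
\]
so $\|\mathscr{K}(t)\|_{L^p} = t^{-\frac{N}{4}(1-\frac{1}{p})}\|\mathscr{K}(1)\|_{L^p}$; the derivative identity gives the extra factor $t^{-m/4}$ exactly as needed.

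Finally, I would argue that the constants $c_p := \|\mathscr{K}(1)\|_{L^p}$ and $c_{p,m} := \|D^m\mathscr{K}(1)\|_{L^p}$ are finite for every $p\ge 1$. This is immediate: the symbol $e^{-|\xi|^4}$ belongs to the Schwartz class $\mathscr{S}(\R^N)$, so does each $(i\xi)^\beta e^{-|\xi|^4}$, and the (inverse) Fourier transform maps $\mathscr{S}$ to itself. Hence $\mathscr{K}(1,\cdot)$ and $D^m\mathscr{K}(1,\cdot)$ are Schwartz functions and in particular lie in every $L^p(\R^N)$, $1\le p\le\infty$. No step here is really an obstacle; the only point that requires mild care is keeping track of the scaling powers, since $N$, $p$ and $m$ all enter. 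The bound on $\|\mathscr{K}(1)\|_{L^p}$ could also be made quantitative via the standard stationary-phase / pointwise decay estimate $|\mathscr{K}(1,x)|\lesssim (1+|x|)^{-(N+\gamma)}$ for some $\gamma>0$, but that is not required for the qualitative statement as written.
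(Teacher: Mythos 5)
Your scaling argument is correct: the substitution $\xi=t^{-1/4}\eta$ gives $\mathscr{K}(t,x)=t^{-N/4}\mathscr{K}(1,t^{-1/4}x)$ and $D^m\mathscr{K}(t,x)=t^{-N/4-m/4}(D^m\mathscr{K})(1,t^{-1/4}x)$, the further change $y=t^{-1/4}x$ produces exactly the stated powers of $t$, and the constants are finite because $(i\xi)^\beta e^{-|\xi|^4}$ is Schwartz, so its inverse Fourier transform lies in every $L^p(\R^N)$. The paper states this lemma without proof (it is recalled as known for the biharmonic heat kernel), but your rescaling is precisely the device the authors use to prove the harder Lemma \ref{lemqt} for the time-fractional kernels, where the same substitution (there $\xi=\vartheta(t^{\al}\zeta)^{-1/4}$, combined with the M-Wright representation and Minkowski's integral inequality) reduces everything to the finiteness of $\int_{\R^N}\big|\overline{\mathscr B}_k(z)\big|^p\,dz$ --- the exact analogue of your finiteness of $\|D^m\mathscr{K}(1)\|_{L^p}$.
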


In view of the above approach, to find the representation for the mild solution to Problem \eqref{GeneralPro}, we consider the IVP for a homogeneous time-fractional biharmonic equation as follows 
\begin{align}\label{HomogeneousIVP Orlicz}
\begin{cases}
\dis \partial^\al_{0|t}\varphi(t,x)+\Delta^2\varphi(t,x)=0,\qquad &\mbox{in}\quad\R^+\times\R^N,\\
\dis \varphi(0,x)=\varphi_0(x),&\mbox{in} \quad \R^N.
\end{cases}
\end{align}
 Applying the Laplace transform, $ \mathscr{L} $, with respect to the time variable to the first equation of \eqref{HomogeneousIVP Orlicz}, we have
\begin{align*}
z^{\alpha}\mathscr{L}\{\varphi\}(z,x)-z^{\alpha-1}\varphi_0(x)+\Delta^2\mathscr{L}\{\varphi\}(z,x)=0.
\end{align*}
Then, by assuming that $ \varphi_0 $ belongs to some appropriate spaces and using the Fourier transform with respect to the spatial variable, the following equation holds
\begin{align*}
z^{\alpha}\mathcal{F}(\mathscr{L}\{\varphi\})(z,\xi)-z^{\alpha-1}\mathcal{F}(\varphi_0)(\xi)+|\xi|^4\mathcal{F}(\mathscr{L}\{\varphi\})(z,\xi)=0.
\end{align*}
By some simple calculations, one has
\begin{align*}
\mathcal{F}(\mathscr{L}\{\varphi\})(z,\xi)=\frac{z^{\alpha-1}}{z^{\alpha}-|\xi|^4}.
\end{align*}
 We now use the inverse Laplace transform to obtain
\begin{align*}
\mathcal{F}(\varphi)(t,\xi)=\widehat{\varphi}(t,\xi)=E_{\alpha,1}\parentheses{-t^{\alpha}|\xi|^4}\widehat{\varphi_0}(\tau).
\end{align*}
Thanks to the Duhamel principle, the Fourier transform of the solution to Problem \eqref{GeneralPro} is given by
\begin{align*}
\widehat{u}(t,\xi)= E_{\al,1} (-t^{\alpha} |\xi|^4)\widehat{u_0}(\xi)+ \int_0^t (t-s)^{\al-1} E_{\al,\al} (- (t-\tau)^\al  |\xi|^4) \widehat{G(u)} (\tau,\xi) d\tau.
\end{align*}
Where $ E_{\al,1},E_{\al,\al} $ are Mittag-Leffler functions. Using the inverse Fourier transform, we obtain
\begin{align*}
u(t,x)= \sqrbrackets{\mathcal F^{-1} \Big(E_{\al,1} (-t^\al |\xi|^4)\Big)*u_0}(x)+ \int_0^t \mathcal F^{-1} \Big((t-\tau)^{\al-1} E_{\al,\al} (-t^\al |\xi|^4)\Big)(x)*G(u(\tau,x))d\tau,
\end{align*}
where we have used the fact that $ \widehat{f*g}(\tau)=\widehat{f}(\tau)\widehat{g}(\tau) $. For convenience, we denote 
\begin{align} \label{a11}
\mathbb K_{1,\al} (t,x)= \mathcal F^{-1} \Big(E_{\al,1} (-t^\al |\xi|^4)\Big)(x),\quad
\mathbb K_{2,\al}(t,x)= \mathcal F^{-1} \Big(t^{\al-1} E_{\al,\al} (-t^\al |\xi|^4)\Big)(x),
\end{align}
and set operators $ \mathbb{Z}_{i,\al}~(i=1,2) $ as follows 
\begin{align*}
\mathbb Z_{i,\al}(t,x) v(t,x)=  \mathbb K_{i,\al} (t,x) * v(t,x)= \int_{\mathbb R^N} \mathbb K_{i,\al} (t,x-y) v(t,y)dy ,~~i=1,2.
\end{align*}
Then, we rewrite our solution formula in a concise form 
\begin{align}\label{MILD SOLUTION FORMULAE}
u(t,x)= \mathbb Z_{1,\al}(t,x)u_0(x)+ \int_0^t \mathbb Z_{2,\al}(t-\tau,x)G(u(\tau,x))d\tau.
\end{align}
{\begin{remark}\label{Rev suggestion}
It is worth noting that the above approach is similar to the common one used to construct the fundamental solution for time-fractional problems with second order differential operators. Let us provide some remarks from interesting works about functional spaces in which the fundamental kernels are considered. 
\begin{enumerate}[(i)]
\item The work \cite{Rev3} studies an evolution problem with the Caputo derivative of order $ \alpha\in(0,1) $ and the Dirac delta distribution centered at $ x = 0 $ (the initial data function). The solution formula of this problem is given by 
\begin{align*}
u(\xi,t)=\mathcal{F}^{-1}\Big(E_{\al,1}\big[(a-4\pi^2b|\xi|^2)t^{\alpha}\big]\Big),\quad a\ge0,b>0.
\end{align*}    
Then, based on it, the authors have made very interesting comments about functional spaces to which the Fourier transform of the solution belongs. More precisely, they showed that $ \mathcal{F}u(\cdot,t) $ is in $ \Lp $ if and only if $ p\in(N/2,\infty) $. This results implies some different case for functional spaces depending on the dimension $ N $.
\item In \cite[Section 3]{Rev4} the authors proved optimal decay estimates for solutions to a time-fractional diffusion equation. Their solution is as follows
\begin{align*}
u(x,t)=\int_{\R^N}Z(x-y,t)u_0(y)dy,~\text{ where }~\mathcal{F}\{Z\}(\xi,t)=E_{\alpha,1}\big(-|\xi|^2t^\al\big).
\end{align*}
From the above, they deduced as a conclusion that $ Z(t) $ fails to belong to $ \Lp $ for $ N\ge4 $ and $ p\ge\frac{N}{N-2} $.
\end{enumerate}
These facts are important when using the fundamental solution to establish well-posed results. In the spirit of the above works, we also present some similar comments on the estimates for kernels $ \mathbb{K}_{1,\alpha},\mathbb{K}_{2,\alpha} $ at Remark \ref{Functional spaces remarks}.
\end{remark}}
In order to achieve the standard smoothing effect of $ \mathbb{Z}_{i,\al}~(i=1,2) $, we also present the mild solution in another form. To this end, we recall the definition of Mittag-Leffler function via the M-Wright type function as follows
\begin{align} \label{e1}
E_{\al,1}(-z)= \int_0^\infty \mathcal M_\al (\zeta) e^{-z\zeta} d\zeta,~~\quad 
E_{\al,\al}(-z)=\int_0^\infty \al\zeta \mathcal M_\al (\zeta) e^{-z\zeta} d\zeta,~~z \in \mathbb C.
\end{align}
Then, we have the second type representation of the solution of Problem \eqref{Main Problem}

\begin{align}\label{Solution Formula}	u(t,x)&=\int_{0}^{\infty}\mathcal{M}_{\alpha}(\zeta)\sqrbrackets{\mathscr K (\zeta t^{\al},x)*\p_0(x)}d\zeta \nn\\
	&+\int_{0}^{t}\int_{0}^{\infty}(t-\tau)^{\alpha-1}\alpha \zeta\mathcal{M}_{\alpha}(\zeta)\sqrbrackets{\mathscr K (\zeta(t-\tau)^{\al},x)*G(u(\tau,x))}d\zeta d\tau.
\end{align}
Due to the great impact of the operator $ \mathbb{K}_{1,\alpha},\mathbb{K}_{2,\alpha} $to our results for mild solutions, we present the following Theorem which can be seen as the combination of Theorem 3.1, Theorem 3.2 and Remark 1.6 of \cite{Wang}.
{\begin{theorem}\label{Continuous_uni_operator_topology}
		Let $ X=\Lp~(1\le p<\infty)$ or $X=C_0(\R^N).$ Then, $ \mathbb{Z}_{1,\al}(t) $ and $ t^{1-\al}\mathbb{Z}_{2,\al}(t) $ are bounded linear operators on $ X $. In addition, for $ w\in X $, $ t\to\mathbb{Z}_{1,\al}(t),~t\to t^{1-\al}\mathbb{Z}_{2,\al}(t) $ are continuous functions from $ \R^+ $ to $ X $. 
\end{theorem}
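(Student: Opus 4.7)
The plan is to work from the $M$-Wright representation already recorded just above the theorem, namely
\begin{align*}
\mathbb{Z}_{1,\al}(t)w(x) &= \int_0^\infty \mathcal{M}_\al(\zeta)\bigl[\mathscr{K}(\zeta t^\al,\cdot) * w\bigr](x)\, d\zeta,\\
t^{1-\al}\mathbb{Z}_{2,\al}(t)w(x) &= \int_0^\infty \al\zeta\,\mathcal{M}_\al(\zeta)\bigl[\mathscr{K}(\zeta t^\al,\cdot) * w\bigr](x)\, d\zeta,
\end{align*}
so that each operator is an average of the classical biharmonic convolution $w \mapsto \mathscr{K}(\tau,\cdot) * w$ against a probability-type density in $\zeta$. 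This reduces everything to (a) uniform bounds for $\mathscr{K}(\tau,\cdot)*w$ in $X$, and (b) strong continuity in $\tau$ of that convolution on $X$.

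For boundedness, I would apply Young's convolution inequality together with Lemma \ref{Pre Lp-Lq estimate} taken at $p=1$: the exponent $-\tfrac{N}{4}(1-\tfrac{1}{1})$ vanishes, so $\|\mathscr{K}(\tau)\|_{L^1}\le c_1$ uniformly in $\tau>0$. Hence $\|\mathscr{K}(\zeta t^\al,\cdot)*w\|_X \le c_1\|w\|_X$ for $X=L^p(\R^N)$, $1\le p<\infty$, and also for $X=C_0(\R^N)$ since the convolution of an $L^1$ kernel with a $C_0$ function remains in $C_0$. Using the standard moment identities for the $M$-Wright density, $\int_0^\infty \mathcal{M}_\al(\zeta)d\zeta=1$ and $\int_0^\infty \al\zeta\,\mathcal{M}_\al(\zeta)d\zeta=1/\Gamma(\al)$, I get
\begin{align*}
\|\mathbb{Z}_{1,\al}(t)w\|_X\le c_1\|w\|_X,\qquad \|t^{1-\al}\mathbb{Z}_{2,\al}(t)w\|_X\le \frac{c_1}{\Gamma(\al)}\|w\|_X,
\end{align*}
which gives boundedness uniformly in $t\in\R^+$.

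For the continuity statement, fix $w\in X$ and $t_0\in\R^+$ and let $t\to t_0$. Writing
\begin{align*}
\mathbb{Z}_{1,\al}(t)w-\mathbb{Z}_{1,\al}(t_0)w=\int_0^\infty \mathcal{M}_\al(\zeta)\bigl[\mathscr{K}(\zeta t^\al,\cdot)*w-\mathscr{K}(\zeta t_0^\al,\cdot)*w\bigr]d\zeta,
\end{align*}
I would show that the biharmonic convolution $\tau\mapsto \mathscr{K}(\tau,\cdot)*w$ is $X$-continuous in $\tau>0$. This is the classical strong continuity of the semigroup generated by $-\Delta^2$ on $L^p(\R^N)$ ($1\le p<\infty$) and on $C_0(\R^N)$: it follows by approximating $w$ in $X$ by Schwartz functions (for which continuity in $\tau$ is plain from the Fourier representation $e^{-\tau|\xi|^4}\widehat{w}(\xi)$ and dominated convergence) and then transferring via the uniform $L^1$-bound on $\mathscr K$. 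Thus, for each $\zeta>0$, the integrand converges to zero in $X$; it is dominated by $2c_1\|w\|_X\,\mathcal{M}_\al(\zeta)$, which is integrable. Lebesgue's dominated convergence theorem (taking $X$-norms inside the integral) then yields $\mathbb{Z}_{1,\al}(t)w\to \mathbb{Z}_{1,\al}(t_0)w$ in $X$. The same argument with the dominating function $2c_1\|w\|_X\,\al\zeta\,\mathcal{M}_\al(\zeta)$, whose integrability is guaranteed by the moment identity above, covers $t^{1-\al}\mathbb{Z}_{2,\al}(t)w$.

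The main technical obstacle is really the strong continuity of the biharmonic convolution semigroup on $L^p$ and on $C_0$; once that is in hand, the two steps above are routine. It is a standard fact but worth stating explicitly, since it leverages the scaling $\mathscr{K}(\tau,x)=\tau^{-N/4}\mathscr{K}(1,\tau^{-1/4}x)$ and the density of Schwartz functions in $X$. A secondary care is needed at $t=0$ if the domain $\R^+$ is meant to include it: then one must additionally verify $\mathscr K(\tau,\cdot)*w\to w$ in $X$ as $\tau\to 0^+$, which is the approximate-identity property following from $\int\mathscr{K}(1,y)dy=1$ and the scaling above.
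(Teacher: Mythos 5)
Your argument is correct, but it is worth pointing out that the paper does not actually prove this theorem: it imports it wholesale as ``the combination of Theorem 3.1, Theorem 3.2 and Remark 1.6 of \cite{Wang}'', i.e.\ it appeals to the abstract theory of fractional resolvent families attached to a generator of a strongly continuous semigroup (here $-\Delta^2$ on $\Lp$ or $C_0(\R^N)$). Your proposal replaces that citation with a self-contained subordination argument: the $M$-Wright representation \eqref{Solution Formula} exhibits $\mathbb{Z}_{1,\al}(t)$ and $t^{1-\al}\mathbb{Z}_{2,\al}(t)$ as averages of the classical biharmonic convolution against the densities $\mathcal M_\al(\zeta)\,d\zeta$ and $\al\zeta\mathcal M_\al(\zeta)\,d\zeta$, whose total masses $1$ and $1/\Gamma(\al)$ you correctly read off from Lemma \ref{lem21}; boundedness then follows from Young's inequality with the $\tau$-uniform bound $\|\mathscr K(\tau)\|_{L^1}\le c_1$ of Lemma \ref{Pre Lp-Lq estimate}, and strong continuity follows from dominated convergence for the Bochner integral in $\zeta$ once the strong continuity of $\tau\mapsto\mathscr K(\tau)*w$ on $X$ is known. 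The trade-off is clear: the paper's route is shorter and, as its own Remark notes, applies verbatim to any space on which $\Delta^2$ generates a $C_0$-semigroup, whereas your route is elementary, stays entirely within the tools the paper already develops (the kernel estimates and the moment identity), and makes the constants explicit. The one ingredient you should either prove in full or cite precisely is the strong continuity of the classical biharmonic semigroup on $\Lp$ and $C_0(\R^N)$; your sketch (continuity for Schwartz data via the Fourier symbol $e^{-\tau|\xi|^4}$, then density plus the uniform $L^1$ bound) is the standard and correct way to do it, and your caveat about $t=0$ is well taken since the theorem only claims continuity on $\R^+=(0,\infty)$.
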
}
\begin{remark}
	{In fact, although theorems of \cite{Wang} can be applied for other spaces in which $ \Delta^2 $ generates a strongly continuous semigroup, in this work, we only focus on the spaces $ \Lp~(1\le p<\infty) $ and $ C_0(\R^N) $.}
\end{remark}
We continue the work by introducing some useful $ L^p- $estimates for the kernel $ \mathbb{K}_{1,\alpha},\mathbb{K}_{2,\alpha} $ by the following lemma.
\begin{lemma} \label{lemqt}
Let {$p\ge1$ and $ k\in\mathbb{N} $} be constants such that {$ k<4-N\parentheses{1-\frac{1}{p}}.$}
Then, there exist two constants $\mathscr C_{k,p}, \overline{ \mathscr C_{k,p}} $ which depend only on $ \al $ and $ N $, such that
\begin{align} \label{K111}
\Big\| D^k \mathbb K_{1,\al} (t ) \Big\|_{L^p} \le \mathscr C_{k,p} (\al, N)  t^{-\frac{\al N}{4}-\frac{\al k }{4}+ \frac{\al N}{4p}} 
\end{align}
and
\begin{align} \label{K22222}
\Big\| D^k \mathbb K_{2,\al} (t ) \Big\|_{L^p} \le \overline{ \mathscr C_{k,p}} (\al, N)    t^{\al-\frac{\al N}{4}-1+\frac{\al N}{4p}-\frac{\al k}{4}}.
\end{align}
\end{lemma}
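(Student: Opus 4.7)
\medskip

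\noindent\textbf{Proof proposal.} The plan is to reduce these bounds to the already available bounds for the biharmonic heat kernel $\mathscr K$ in Lemma \ref{Pre Lp-Lq estimate} by means of the M\mbox{-}Wright representation \eqref{e1}. Indeed, taking the inverse Fourier transform in \eqref{e1} applied to $z=t^\alpha|\xi|^4$ (and using that $\mathcal F^{-1}(e^{-\zeta t^\alpha|\xi|^4})(x)=\mathscr K(\zeta t^\alpha,x)$), one obtains the subordination identities
\begin{align*}
\mathbb K_{1,\alpha}(t,x)=\int_0^\infty \mathcal M_\alpha(\zeta)\,\mathscr K(\zeta t^\alpha,x)\,d\zeta,
\qquad
\mathbb K_{2,\alpha}(t,x)=t^{\alpha-1}\int_0^\infty \alpha\zeta\,\mathcal M_\alpha(\zeta)\,\mathscr K(\zeta t^\alpha,x)\,d\zeta.
\end{align*}
Differentiating $k$ times in $x$ inside the integral (the interchange being justified by the fast decay of $\mathcal M_\alpha$ together with Lemma \ref{Pre Lp-Lq estimate}), the problem is reduced to integrals of $D^k\mathscr K(\zeta t^\alpha,\cdot)$ weighted by $\mathcal M_\alpha$.

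\smallskip

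\noindent Next, I would apply Minkowski's integral inequality to move the $L^p(\mathbb R^N)$ norm inside each integral and insert the bound from Lemma \ref{Pre Lp-Lq estimate}:
\begin{align*}
\bigl\| D^k\mathbb K_{1,\alpha}(t)\bigr\|_{L^p}
\;\le\;\int_0^\infty \mathcal M_\alpha(\zeta)\,\bigl\|D^k\mathscr K(\zeta t^\alpha)\bigr\|_{L^p}\,d\zeta
\;\le\; c_{p,k}\int_0^\infty \mathcal M_\alpha(\zeta)\,(\zeta t^\alpha)^{-\frac{N}{4}(1-\frac{1}{p})-\frac{k}{4}}\,d\zeta.
\end{align*}
Pulling the $t$\mbox{-}factor out yields the claimed exponent $-\frac{\alpha N}{4}+\frac{\alpha N}{4p}-\frac{\alpha k}{4}$. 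The constant is then controlled by the moment
\begin{align*}
\int_0^\infty \zeta^{\,\delta}\,\mathcal M_\alpha(\zeta)\,d\zeta=\frac{\Gamma(1+\delta)}{\Gamma(1+\alpha\delta)},\qquad \delta>-1,
\end{align*}
applied with $\delta=-\frac{N}{4}(1-\frac{1}{p})-\frac{k}{4}$. The convergence condition $\delta>-1$ is precisely the hypothesis $k<4-N(1-\tfrac{1}{p})$, which is where that assumption enters in an essential way.

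\smallskip

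\noindent The estimate \eqref{K22222} for $\mathbb K_{2,\alpha}$ is obtained by the same procedure, only with the extra factor $\alpha\zeta$ in the subordinator, so that the relevant moment becomes the one at order $\delta+1$, still finite under the same hypothesis; the prefactor $t^{\alpha-1}$ combines with $t^{-\alpha(\frac{N}{4}(1-\frac{1}{p})+\frac{k}{4})}$ to give the claimed power.

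\smallskip

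\noindent I expect the main technical obstacle to be the rigorous justification of bringing $D^k$ and $\mathcal F^{-1}$ inside the $\zeta$\mbox{-}integral in the subordination formula: one must check local integrability of $\mathcal M_\alpha(\zeta)(\zeta t^\alpha)^{-\frac{N}{4}(1-\frac{1}{p})-\frac{k}{4}}$ near $\zeta=0$, which again uses exactly the inequality $k<4-N(1-\tfrac{1}{p})$, together with the exponential decay of $\mathcal M_\alpha$ at infinity to handle the tail. Once these two endpoints are controlled, Fubini and dominated convergence close the argument, and the constants $\mathscr C_{k,p}(\alpha,N)$ and $\overline{\mathscr C_{k,p}}(\alpha,N)$ are read off as products of the $c_{p,k}$ from Lemma \ref{Pre Lp-Lq estimate} and the ratio of Gamma functions above.
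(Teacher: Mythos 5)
Your proposal is correct and follows essentially the same route as the paper: the M-Wright subordination of $\mathbb K_{i,\al}$ to the biharmonic heat kernel, Minkowski's integral inequality in the $\zeta$-variable, and the moment formula $\int_0^\infty \zeta^{\delta}\mathcal M_\al(\zeta)\,d\zeta=\Gamma(1+\delta)/\Gamma(1+\al\delta)$ (Lemma \ref{lem21}), with the hypothesis $k<4-N(1-\tfrac{1}{p})$ entering exactly as the condition $\delta>-1$. The only cosmetic difference is that the paper re-derives the scaling of $\mathscr K$ explicitly through the rescaled profile $\overline{\mathscr B}_k$ (so as to exhibit the constants $\mathscr C_{k,p}$ in closed form) rather than quoting Lemma \ref{Pre Lp-Lq estimate} as a black box.
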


\begin{proof}~
	
\noindent\textbf{\textit{Step 1. To verify the first inequality}}

In this step, we deal with the term $\mathbb K_{1,\al} (t,x)$. In fact, the  representation  of $\mathbb K_{1,\al} (t,x)$ and  \eqref{e1} together with Fubini's theorem allow us to deduce
\begin{align*}
\mathbb K_{1,\al} (t,x)	= \mathcal F^{-1} \Big(E_{\al,1} (-t^\al |\xi|^4)\Big)(x)&=\parentheses{2\pi}^{-N} \int_{\mathbb R^N}   e^{i <\xi, x>}  \Big(E_{\al,1} (-t^\al |\xi|^4)\Big) d\xi\nn\\
&= \parentheses{2\pi}^{-N}\int_{\mathbb R^N}   e^{i <\xi, x>}  \Big(\int_0^\infty \mathcal M_\al (\zeta) e^{-t^\al \zeta  |\xi|^4} d\zeta \Big) d\xi\nn\\
&=\parentheses{2\pi}^{-N} \int_0^\infty \int_{\mathbb R^N} \mathcal M_\al (\zeta)    e^{i <\xi, x>} e^{-t^\al \zeta  |\xi|^4}    d\xi d\zeta.
\end{align*}
By setting  $\xi = \vartheta  (t^\al \zeta )^{-\frac{1}{4}}$, it is straightforward that  $d\xi = (t^\al \zeta  )^{-\frac{N}{4}} d\vartheta  $ and $|\xi|^4= (t^\al \zeta  )^{-1} |\vartheta|^4$.
Let us  denote by
\begin{align} \label{B}
\overline{\mathscr B}_k(y)= \int_{\mathbb R^N} |\vartheta|^ke^{i <y,\vartheta>  } e^{-|\vartheta|^4} d\vartheta,\quad k\ge0.
\end{align}
By some simple transformations, we find the following equality
\begin{align*}
\mathbb K_{1,\al} (t,x)&=  \int_0^\infty  \int_{\mathbb R^N}   (t^\al \zeta )^{-\frac{N}{4}}  \mathcal M_\al (\zeta)    e^{{i <\vartheta,  x> t^{-\frac{\al}{4}} \zeta ^{-\frac{1}{4}}}}   e^{-|\vartheta|^4} d\vartheta    d\zeta  \nn\\
&= t^{-\frac{\al N}{4}}\Bigg(    \int_0^\infty \zeta^{-\frac{N}{4}} \mathcal M_\al (\zeta)  \int_{\mathbb R^N} e^{{i <\vartheta,  x> t^{-\frac{\al}{4}} \zeta ^{-\frac{1}{4}}}}    e^{-|\vartheta|^4}   d\vartheta    d \zeta    \Bigg)\\
&= t^{-\frac{\al N}{4}}  \int_0^\infty \zeta^{-\frac{N}{4}} \mathcal M_\al (\zeta)   \overline{\mathscr B}_0 (x  (t^\al \zeta  )^{-\frac{1}{4}} )  d\zeta.
\end{align*}
If we set $x  (t^\al \zeta )^{-\frac{1}{4}} =z, $ then it follows immediately  that $dx=  (t^\al \zeta )^{\frac{N}{4}} dz$. Applying Minkowski’s inequality in integral form, we have
{\begin{align*}
		\left(\int_{\R^N}\left|\int_0^\infty \zeta^{-\frac{N}{4}} \mathcal M_\al (\zeta)   \overline{\mathscr B}_0 (x  (t^\al \zeta  )^{-\frac{1}{4}} )  d\zeta\right|^pdx\right)^{\frac{1}{q}}\le\int_0^\infty\left(\int_{\R^N}\left| \zeta^{-\frac{N}{4}} \mathcal M_\al (\zeta)   \overline{\mathscr B}_0 (x  (t^\al \zeta  )^{-\frac{1}{4}} )  \right|^pdx\right)^{\frac{1}{q}}d\zeta.
\end{align*}}
It follows that
\begin{align*}
\Big\|\mathbb K_{1,\al} (t,x)\Big\|_{L^p} 
&\le  t^{-\frac{\al N}{4}} \int_0^\infty    \Bigg( \int_{\mathbb R^N} \Big| \zeta^{-\frac{N}{4}} \mathcal M_\al (\zeta)     \overline{\mathscr B}_0 \left(x  (t^\al \zeta  )^{-\frac{1}{4}} \right) \Big| ^p (t^\al \zeta )^{\frac{N}{4}}  dz \Bigg)^{\frac{1}{p}} d\zeta  \nn\\
&= t^{-\frac{\al N}{4}+\frac{\al N}{4p}}    \left( \int_0^\infty \zeta^{\frac{N}{4p}-\frac{N}{4}}  \mathcal M_\al (\zeta)      d\zeta \right)  \Bigg( \int_{\mathbb R^N}   \Big|  \overline{\mathscr B}_0 (z) \Big|^p dz \Bigg)^{\frac{1}{p}}.
\end{align*}
By setting $  \Theta_{p,N}=\Bigg( \int_{\mathbb R^N}  \Big|  \overline{\mathscr B}_0 (z) \Big|^p dz \Bigg)^{\frac{1}{p}} $ and using Lemma \ref{lem21},  we obtain the following bound
\begin{align} \label{K11}
\Big\|\mathbb K_{1,\al} (t )\Big\|_{L^p} \le   \frac{\Theta_{p,N} \Gamma(\frac{N}{4p}-\frac{N}{4}+1)  }{\Gamma(\frac{\al N}{4p}-\frac{\al N}{4}+1) } t^{-\frac{\al N}{4}+\frac{\al N}{4p}} .
\end{align}
 Next, let us consider the derivative of $\mathbb K_{1,\al} (t,x)$. It is easy to see that
\begin{align*}
\Big\|D \mathbb K_{1,\al} (t )\Big\|_{L^p}= \left( \int_{\mathbb R^N} \big|D \mathbb K_{1,\al} (t,x)\big|^p   dx  \right)^{1/p}.
\end{align*}
We have a view on the modus of the boundness for the  term $D^k \mathbb K_{1,\al} (t,x)$ as follows
\begin{align} \label{d1}
\big|D^k \mathbb K_{1,\al} (t,x)\big| \le 	t^{-\frac{\al N}{4}}  \int_0^\infty \zeta^{-\frac{N}{4}} \mathcal M_\al (\zeta)  \big| D^k \overline{\mathscr B}_0 (x  (t^\al \zeta  )^{-\frac{1}{4}} ) \big|  d\zeta
\end{align}
It follows from {$|\vartheta|^k \le N^{k-1}\sum_{j=1}^N |\vartheta_j|^k $} that
\begin{align} \label{d22}
\Big| D^k  \overline{\mathscr B}_0 \big(x  (t^\al \zeta  )^{-\frac{1}{4}} )  \Big| &\le {N^{k-1}} \sum_{j=1}^N \Bigg| \int_{\mathbb R^N} \parentheses{i\vartheta_j t^{-\frac{\al}{4}}  \zeta^{-\frac{1}{4}}}^k e^{i <\vartheta,x> t^{-\frac{\al}{4}}  \zeta^{-\frac{1}{4}}} e^{-|\vartheta|^4} d\vartheta \Bigg|\nn\\
&\le {N^{k-1}} t^{-\frac{\al k}{4}}  \zeta^{-\frac{k}{4}} \int_{\mathbb R^N} |\vartheta|^k  \big|e^{i <\vartheta,x> t^{-\frac{\al}{4}}  \zeta^{-\frac{1}{4}}} e^{-|\vartheta|^4} \big| d\vartheta.
\end{align}
Then, by using a new variable $x  (t^\al \zeta)^{-\frac{1}{4}} =z$ and some simple caculations, we find that
\begin{equation} \label{d3}
\int_{\mathbb R^N} |\vartheta|^k \big| e^{i <\vartheta,x> t^{-\frac{\al}{4}}  \zeta^{-\frac{1}{4}}} e^{-|\vartheta|^4} \big| d\vartheta= \overline{\mathscr B}_k \big( z\big) .
\end{equation}
Combining \eqref{d1}, \eqref{d22}, \eqref{d3}, 
\begin{align*}
\Big\| D^k \mathbb K_{1,\al} (t,x) \Big\|_{L^p}&= \Bigg( \int_{\mathbb R^N} \Big|D^k \mathbb K_{1,\al} (t,x) \Big|^p dx \Bigg)^{\frac{1}{p}}\nn\\
&\le {N^{k-1}} \Bigg( \int_{\mathbb R^N} \Bigg| t^{-\frac{\al N}{4}-\frac{\al k}{4}}     \int_0^\infty  \mathcal M_\al (\zeta) \zeta^{-\frac{k}{4}-\frac{N}{4}} \overline {\mathscr B}_k \big( z\big)    d\zeta\Bigg|^p dx \Bigg)^{\frac{1}{p}}\\
&\le {N^{k-1}} t^{-\frac{\al N}{4}-\frac{\al k}{4}}   \Bigg( \int_{\mathbb R^N} \Big|    \int_0^\infty \mathcal M_\al (\zeta) \zeta^{-\frac{k}{4}-\frac{N}{4}}  \overline {\mathscr B}_k \big( z\big)     d\zeta \Big|^p   dx \Bigg)^{\frac{1}{p}}.
\end{align*}
Applying  Minkowski’s inequality in integral form and noting that $dx=  (t^\al \zeta )^{\frac{N}{4}} dz$,  
	\begin{align} \label{f1}
		\Big\| D^k  \mathbb K_{1,\al} (t,x) \Big\|_{L^p} &\le{N^{k-1}}   t^{-\frac{\al N}{4}-\frac{\al k}{4}}   \Bigg( \int_{\mathbb R^N} \Big|    \int_0^\infty \mathcal M_\al (\zeta) \zeta^{-\frac{k}{4}-\frac{N}{4}}  \overline {\mathscr B}_k \big( z\big)     d\zeta \Big|^p   dx \Bigg)^{\frac{1}{p}}\nn\\
		&\le{N^{k-1}} t^{-\frac{\al N}{4}-\frac{\al k}{4}}    \int_0^\infty \Bigg( \int_{\mathbb R^N} \Big|     \mathcal M_\al (\zeta) \zeta^{-\frac{k}{4}-\frac{N}{4}}  \overline {\mathscr B}_k \big( z\big)     \Big|^p (t^\al \zeta)^{\frac{N}{4}} dz\Bigg)^{\frac{1}{p}} d\zeta \nn\\
		&\le{N^{k-1}}  t^{-\frac{\al N}{4}-\frac{\al k}{4}+ \frac{\al N}{4p}}   \int_0^\infty \zeta^{-\frac{k}{4}-\frac{N}{4}+\frac{ N}{4p}} \mathcal M_\al (\zeta)       \Bigg( \int_{\mathbb R^N} \Big| \overline{ \mathscr B}_k (z ) \Big| ^p dz \Bigg)^{\frac{1}{p}} d\zeta. 
	\end{align}
Due to the condition $1- \frac{k}{4}-\frac{N}{4}+\frac{N}{4p}>0$ and Lemma \ref{lem21}, we obtain
	$$
	\int_0^\infty \zeta^{-\frac{k}{4}-\frac{N}{4}+\frac{N}{4p}} \mathcal M_\al (\zeta)  d\zeta= \frac{\Gamma \left(1- \frac{k}{4}-\frac{N}{4}+\frac{N}{4p} \right)}{\Gamma \left( \frac{-\al k}{4}-\frac{\al N}{4}+\frac{\al N}{4p}+1 \right)}
	$$
and, together  with  \eqref{f1},  allow us to deduce the following boundness result
\begin{align*}
\Big\| D^k  \mathbb K_{1,\al} (t,x) \Big\|_{L^p} \le \mathscr C_{k,p} (\al, N)   t^{-\frac{\al N}{4}-\frac{\al k}{4}+ \frac{\al N}{4p}} , 
\end{align*}
	where we denote 
	\[
	\mathscr C_{k,p} (\al, N) = \frac{{N^{k-1}}\Gamma \left(1- \frac{k}{4}-\frac{N}{4}+\frac{N}{4p} \right)}{\Gamma \left( \frac{-\al k}{4}-\frac{\al N}{4}+\frac{\al N}{4p}+1 \right)} \Bigg( \int_{\mathbb R^N} \Big|  \overline{\mathscr B}_k (z) \Big|^p dz \Bigg)^{\frac{1}{p}}.
	\] 
	
\noindent\textbf{\textit{Step 2. Verify the second inequality}}

The  representation  of $\mathbb K_{2,\al} (t,x)$ and  \eqref{e1} together with Fubini's theorem imply
\begin{align}\label{a1}
\mathbb K_{2,\al} (t,x)=  \mathcal F^{-1} \Big(t^{\al-1} E_{\al,\al} (-t^\al |\xi|^4)\Big)&= t^{\al-1} \int_{\mathbb R^N}  e^{-i <\xi, x>} \Big(E_{\al,\al} (-t^\al |\xi|^4)\Big) d\xi\nn\\
&=t^{\al-1}  \parentheses{2\pi}^{-N}\int_{\mathbb R^N}  e^{i <\xi, x>} \Big(\int_0^\infty \al\zeta \mathcal M_\al (\zeta) e^{-t^\al \zeta |\xi|^4} d\zeta \Big) d\xi \nn\\
&= t^{\al-1}  \parentheses{2\pi}^{-N}\int_0^\infty \al\zeta  \mathcal M_\al (\zeta)  \Bigg( \int_{\mathbb R^N}  e^{i <\xi, x>}e^{-t^\al \zeta |\xi|^4} d\xi \Bigg)  d\zeta.
\end{align}
By setting  $\xi = \vartheta  (t^\al \zeta )^{-\frac{1}{4}}$, we deduce $d\xi = (t^\al \zeta )^{-\frac{N}{4}} d\vartheta  $ and $|\xi|^4= (t^\al \zeta )^{-1} |\vartheta|^4$.	Using  \eqref{a1}, 
\begin{align*}
\mathbb K_{2,\al} (t,x)=  \al t^{\al-1}  \int_0^\infty  \zeta  (t^\al \zeta)^{-\frac{N}{4}}  \mathcal M_\al (\zeta)  \int_{\mathbb R^N}   e^{i <\vartheta,  x> t^{-\frac{\al}{4}}\eta^{-\frac{1}{4}}}   e^{-|\vartheta|^4} d \vartheta    d\zeta.
\end{align*}
By a similar argument as in Step 1, 
\begin{align*}
\Big\|\mathbb K_{2,\al} (t,x)\Big\|_{L^p} &\le  \al t^{\al-\frac{\al N}{4}-1} \int_0^\infty    \Bigg( \int_{\mathbb R^N} \Bigg| \zeta^{1-\frac{N}{4}}  \mathcal M_\al (\zeta)     \overline{\mathscr B}_0 (x  (t^\al  \zeta)^{-\frac{1}{4}} ) \Bigg| ^p dx \Bigg)^{\frac{1}{p}} d\zeta \nn\\
&=   \al t^{\al-\frac{\al N}{4}-1} \int_0^\infty    \Bigg( \int_{\mathbb R^N} \Big| \zeta^{1-\frac{N}{4}} \mathcal M_\al (\zeta)     \overline{\mathscr B}_0 \left(x  (t^\al \zeta)^{-\frac{1}{4}} \right) \Big| ^p (t^\al \zeta)^{\frac{N}{4}}  dz \Bigg)^{\frac{1}{p}} d\zeta \nn\\
&=  \al t^{\al-\frac{\al N}{4}-1+\frac{\al N}{4p}}      \Bigg( \int_{\mathbb R^N}   \Big|  \overline{\mathscr B}_0 (z) \Big|^p dz \Bigg)^{\frac{1}{p}} \int_0^\infty \zeta^{1+\frac{N}{4p}-\frac{N}{4}}  \mathcal M_\al (\zeta)      d\zeta  \nn\\
&= \frac{\al \Theta_{p,N} \Gamma(\frac{N}{4p}-\frac{N}{4}+2)  }{\Gamma(\frac{\al N}{4p}-\frac{\al N}{4}+1+\al) }  t^{\al-\frac{\al N}{4}-1+\frac{\al N}{4p}}.
\end{align*}
Now, we  estimate the derivative of the quantity $\mathbb K_{2,\al}$. Let us recall the following formula 
\begin{align*}
\mathbb K_{2,\al} (t,x)& =t^{\al-1-\frac{\al N}{4}}  \int_0^\infty \al \zeta^{1-\frac{N}{4}} \mathcal M_\al (\zeta)  \overline{\mathscr B}_0 \big(xt^{-\frac{\al}{4}}  \zeta^{-\frac{1}{4}} \big)d\zeta.
\end{align*}
In view of the boundedness of  $D \mathbb K_{1,\al} (t,x)$, we have 
\begin{align} \label{d111}
\big|D^k \mathbb K_{2,\al} (t,x)\big| \le 	t^{\al-1-\frac{\al N}{4}}   \int_0^\infty \al \zeta^{1-\frac{N}{4}} \mathcal M_\al (\zeta)  \big| D^k \overline{\mathscr B}_0 (x  (t^\al \zeta  )^{-\frac{1}{4}} ) \big|  d\zeta.
\end{align}
It follows from $|\vartheta|^k{\le N^{k-1}}\sum_{j=1}^N |\vartheta_j|^k $ that
\begin{align} \label{d2}
\Big| D^k \overline{\mathscr B}_0 \big(xt^{-\frac{1}{4}}  \eta^{-\frac{1}{4}} \big)  \Big| &\le{N^{k-1}} \sum_{j=1}^N \Bigg| \int_{\mathbb R^N} \parentheses{i\vartheta_j t^{-\frac{\al}{4}}  \zeta^{-\frac{1}{4}}}^k e^{i <\vartheta,x> t^{-\frac{\al}{4}}  \zeta^{-\frac{1}{4}}} e^{-|\vartheta|^4} d\vartheta \Bigg|\nn\\
&\le {N^{k-1}}t^{-\frac{\al k}{4}}  \zeta^{-\frac{k}{4}} \int_{\mathbb R^N} |\vartheta|^k  \big|e^{i <\vartheta,x> t^{-\frac{\al}{4}}  \zeta^{-\frac{1}{4}}} e^{-|\vartheta|^4} \big| d\vartheta.
\end{align}
By using subtitution $x  (t^\al \zeta)^{-\frac{1}{4}} =z$, the  second derivative with respect to $x$ of $\mathbb K_{2,\al} (t,x)$ is estimated by
\begin{align*}
&\Big\| D^k  \mathbb K_{2,\al} (t,x) \Big\|_{L^p}= \Bigg( \int_{\mathbb R^N} \Big| D^k  \mathbb K_{2,\al} (t,x) \Big|^p dx \Bigg)^{\frac{1}{p}}\nn\\
&\le {N^{k-1}} \Bigg( \int_{\mathbb R^N} \Bigg| t^{\al-1-\frac{\al N}{4}-\frac{\al k}{4}}   \int_0^\infty \al \zeta^{1-\frac{N}{4}-\frac{k}{4}} \mathcal M_\al (\zeta)  \int_{\mathbb R^N} |\vartheta|^k \Big| \exp\Big({i <\vartheta,x> t^{-\frac{1}{4}}  \zeta^{-\frac{1}{4}}}\Big) e^{-|\vartheta|^4} \Big| d\vartheta     d\zeta \Bigg|^p dx\Bigg)^{\frac{1}{p}}\nn\\
&={N^{k-1}} \Bigg( \int_{\mathbb R^N} \Bigg| t^{\al-1-\frac{\al N}{4}-\frac{\al k}{4}}   \int_0^\infty  \al \mathcal M_\al (\zeta) \zeta^{1-\frac{N}{4}-\frac{ k}{4}} \overline {\mathscr B}_k \big( z\big)    d\zeta\Bigg|^p dx \Bigg)^{\frac{1}{p}} \nn\\
& \le{N^{k-1}} t^{\al-1-\frac{\al N}{4}-\frac{\al k}{4}}  \Bigg( \int_{\mathbb R^N} \Big|    \int_0^\infty \al \mathcal M_\al (\zeta) \zeta^{1-\frac{N}{4}-\frac{ k}{4}}  \overline {\mathscr B}_k \big( z\big)     d\zeta \Big|^p   dx \Bigg)^{\frac{1}{p}}.
\end{align*}
Applying Minkowski’s inequality in integral form, we find that
\begin{align} \label{e11}
\Big\| D^k \mathbb K_{2,\al} (t,x) \Big\|_{L^p} &\le{N^{k-1}}  t^{\al-1-\frac{\al N}{4}-\frac{\al k}{4}}   \Bigg( \int_{\mathbb R^N} \Big|    \int_0^\infty \al \mathcal M_\al (\zeta) \zeta^{1-\frac{N}{4}-\frac{ k}{4}}  \overline {\mathscr B}_k \big( z\big)     d\zeta \Big|^p   dx \Bigg)^{\frac{1}{p}} \nn\\
&\le{N^{k-1}} \al t^{\al-1-\frac{\al N}{4}-\frac{\al k}{4}}   \int_0^\infty \Bigg( \int_{\mathbb R^N} \Big|     \mathcal M_\al (\zeta) \zeta^{1-\frac{N}{4}-\frac{ k}{4}} \overline {\mathscr B}_k \big( z\big)     \Big|^p (t^\al \zeta)^{\frac{N}{4}} dz\Bigg)^{\frac{1}{p}} d\zeta \nn\\
&\le{N^{k-1}}  \al t^{\al-\frac{\al N}{4}-1+\frac{\al N}{4p}-\frac{\al k}{4}} \int_0^\infty \zeta^{1-\frac{N}{4}+\frac{N}{4p}-\frac{ k}{4}} \mathcal M_\al (\zeta)       \Bigg( \int_{\mathbb R^N} \Big| \overline{ \mathscr B}_k (z ) \Big| ^p dz \Bigg)^{\frac{1}{p}} d\zeta \nn\\
&={N^{k-1}}\al \Bigg( \int_{\mathbb R^N} \Big|  \overline{\mathscr B}_k (z) \Big|^p dz \Bigg)^{\frac{1}{p}}  t^{\al-\frac{\al N}{4}-1+\frac{\al N}{4p}-\frac{\al k}{4}}  \int_0^\infty \zeta^{1-\frac{N}{4}+\frac{N}{4p}-\frac{ k}{4}}  \mathcal M_\al (\zeta)  d\zeta. 
\end{align}
Let us continue by computing the integral term on the right-hand side of \eqref{e1}. Indeed, using Lemma \eqref{lem21} and  noting that $2-\frac{N}{4}+\frac{N}{4p}-\frac{ k}{4}>0$, we immediately derive
	\begin{equation} \label{e12}
		\int_0^\infty \zeta^{1-\frac{N}{4}+\frac{N}{4p}-\frac{ k}{4}}  \mathcal M_\al (\zeta)  d\zeta= \frac{\Gamma \left( 2-\frac{N}{4}+\frac{N}{4p}-\frac{ k}{4} \right)}{\Gamma \left( 1+\al-\frac{\al N}{4}+\frac{\al N}{4p}-\frac{ \al k}{4} \right)}.
	\end{equation}
	Combining \eqref{e11} and \eqref{e12}, we find that there exists $\overline{ \mathscr C_{k,p}} (\al, N)$ such that 
	\begin{align*}
		\Big\| D^k \mathbb K_{2,\al} (t,x) \Big\|_{L^p} \le {N^{k-1}}\Bigg( \int_{\mathbb R^N} \Big|  \overline{\mathscr B}_k (z) \Big|^p dz \Bigg)^{\frac{1}{p}} \frac{\al \Gamma \left( 2-\frac{N}{4}+\frac{N}{4p}-\frac{ k}{4} \right)}{\Gamma \left( 1+\al-\frac{\al N}{4}+\frac{\al N}{4p}-\frac{ \al k}{4} \right)}   t^{\al-\frac{\al N}{4}-1+\frac{\al N}{4p}-\frac{\al k}{4}} .
	\end{align*}
We end the proof here.
\end{proof}
{\begin{remark}\label{Functional spaces remarks}
Let us state some comments on the assumptions of the above lemma as follows.
\begin{enumerate}[(i)]
\item When $ p=1 $ the assumtion $ k<4-N\left(1-\frac{1}{p}\right) $ implies that we can take $ k $ from the set $ \{0,1,2,3\} $. In addition, when $ p=1 $ and $ k=0 $, from the facts that $ \Theta_{1,N}=1,~\frac{\alpha}{\Gamma(1+\alpha)}<\Gamma(2)=1 $, we can bound $ \mathscr C_{k,p} $ and $ \overline{ \mathscr C_{k,p}} $ by $ 1 $. 	
\item When $ k=0 $, the assumption becomes $ \frac{N}{4}\left(1-\frac{1}{p}\right)<1 $. This assumption is always satisfied whenever $ N\le4 $. On the other hand, when $ N\ge5 $, we need to consider the condition that $ p<\frac{N}{N-4} $ further, if we want to apply this lemma. 
\item When $ k\ge1 $, we have a certain restriction on the hypothesis for $ p $. For example, when $ N=4 $ the hypothesis $ 1\le p<\frac{N}{N-k} $ implies that $ p\in\{1,2,3\}. $ In short, when using Lemma \ref{lemqt},  the larger $ k $ and dimension $ N $, the more restricted on the amount of $ p. $ 
\end{enumerate}
\end{remark}}

\begin{remark}
From \cite[Proposition 2.1]{Galaktionov}, we can bound $ \Theta_{p,N} $ by a constant $\Theta_{N}$ independent of $ p $. This fact will be needed when we set up some linear estimates.
\end{remark}
\subsection{Space setting}
\begin{definition}\label{Original definition}
	{Assume that a function $ \Xi:\R^+\cup\{0\}\rightarrow\R^+\cup\{0\} $ is increasing convex, right continuous at $ 0 $ and} 
	\begin{align*}
		\lim\limits_{z\to\infty}\Xi(z)=\infty.
	\end{align*}
Then, we define the Orlicz space $ L^{\Xi}(\R^N) $ in the following fashion
\begin{align*}
L^{\Xi}(\R^N)=\left\{\varphi\in L_{loc}^1(\R^N);\int_{\R^N}\Xi\parentheses{\frac{|\varphi(x)|}{\kappa}}dx<\infty,\text{ for some }\kappa>0\right\}.
\end{align*}
\end{definition}
\begin{remark}
	The Orlicz space $ L^{\Xi}(\R^N) $ mentioned above is a Banach space, endowed with the Luxemburg norm
	\begin{align*}
		\norm{\varphi}_{\Xi}=\inf\left\{\kappa>0;\int_{\R^N}\Xi\parentheses{\frac{|\varphi(x)|}{\kappa}}dx\le1\right\}.
	\end{align*}
\end{remark}

\begin{remark}
Let $ 1<p <\infty $, by choosing $ \Xi(z)=z^p $, we can identify the space $ L^{\Xi}(\R^N) $ with the usual Lebesgue space $ \Lp. $ For the sake of brevity, we set 
\begin{align*}
\norm{\cdot}_{L^p+L^q}:=\norm{\cdot}_{\Lp\cap\Lq},\qquad p,q\in[1;\infty].
\end{align*}
\end{remark}

\begin{definition}\label{Main_Orlicz_Space}
Let $ 1\le p <\infty $, in the rest of this work, we use the symbol $ \Lep $ to indicate the Orlicz spaces with $ \Xi(z)=e^{|z|^p}-1$. We also denote 
\begin{align*}
\norm{\cdot}_{L^q+\Xi}:=\norm{\cdot}_{\Lq\cap\Lep},\qquad q\in[1;\infty].
\end{align*}
\begin{definition}
Let $ 1\le p<\infty $. We define the following subspace of $ \Lep $
\begin{align*}
	L^{\Xi}_0(\R^N)=\left\{\varphi\in L_{loc}^1(\R^N);\int_{\R^N}\Xi\parentheses{\frac{|\varphi(x)|}{\kappa}}dx<\infty,\text{ for every }\kappa>0\right\}.
\end{align*}
\begin{remark}
It can be shown from \cite{Ioku2} that $ \0Lep=\overline{C_0^{\infty}(\R^N)}^{\Lep}. $
\end{remark}
\end{definition}
\end{definition}

From the previous definitions, we can note that the Orlicz space is a generalization of the usual Lebesgue space. Let us introduce some of the useful embeddings between Orlicz spaces and Lebesgue spaces that we will need in our main results section.   
\begin{lemma}\label{Embedding exp to Lq}
	For every constants $ p,q $ satisfying $ 1\le p\le q<\infty $, the embedding $ \Lep\hookrightarrow\Lq $ holds. In addition, 
	\begin{align}
		\norm{\varphi}_{L^q}\le\sqrbrackets{\Gamma\parentheses{\frac{q}{p}+1}}^{\frac{1}{q}}\norm{\varphi}_{\Xi}.
	\end{align}
\end{lemma}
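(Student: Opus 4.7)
The plan is to reduce the embedding to a pointwise inequality relating $y^s$ and $e^y-1$, and then to integrate. Set $\lambda := \norm{\varphi}_{\Xi}$; if $\lambda = 0$ then $\varphi = 0$ a.e.\ and the claim is trivial, so assume $\lambda > 0$. From the definition of the Luxemburg norm, choosing $\kappa_n \downarrow \lambda$ satisfying $\int_{\R^N}(e^{(|\varphi|/\kappa_n)^p}-1) dx \le 1$ and invoking monotone convergence (since $\Xi$ is non-decreasing and continuous on $[0,\infty)$) yields the baseline bound
\[
\int_{\R^N}\Bigl(e^{(|\varphi(x)|/\lambda)^p}-1\Bigr) dx \le 1.
\]

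The crux of the argument is the pointwise inequality
\[
y^s \;\le\; \Gamma(s+1)\bigl(e^y-1\bigr), \qquad y\ge 0,\ s\ge 1. \qquad (\ast)
\]
To establish $(\ast)$ I would invoke the integral representation $\Gamma(s) = \int_0^\infty t^{s-1}e^{-t}dt$. The substitution $t=u+y$ gives
\[
\Gamma(s)\,e^y \;=\; \int_{-y}^\infty (u+y)^{s-1} e^{-u} du \;\ge\; \int_0^\infty (u+y)^{s-1} e^{-u} du \;\ge\; y^{s-1}\int_0^\infty e^{-u} du \;=\; y^{s-1},
\]
where the first inequality discards a non-negative contribution on $[-y,0]$ and the second uses $(u+y)^{s-1}\ge y^{s-1}$ for $u\ge 0$, both valid because $s-1\ge 0$. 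Setting $g(y) := \Gamma(s+1)(e^y-1) - y^s$ one then computes $g'(y) = s\bigl[\Gamma(s)\,e^y - y^{s-1}\bigr] \ge 0$, and combined with $g(0)=0$ this yields $g\ge 0$ on $[0,\infty)$, i.e.\ $(\ast)$.

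Finally, applying $(\ast)$ with $s = q/p \ge 1$ and $y = (|\varphi(x)|/\lambda)^p$ gives
\[
\frac{|\varphi(x)|^q}{\lambda^q} \;\le\; \Gamma\!\left(\tfrac{q}{p}+1\right)\Bigl(e^{(|\varphi(x)|/\lambda)^p}-1\Bigr);
\]
integrating over $\R^N$ and using the baseline bound produces $\norm{\varphi}_{L^q}^q \le \Gamma(q/p+1)\lambda^q$, which is exactly the advertised norm estimate and in particular shows $\varphi \in \Lq$, so the embedding holds. The delicate point is $(\ast)$ at non-integer $s$: the naive Taylor expansion $e^y-1 = \sum_{k\ge 1}y^k/k!$ reproduces the inequality only at integer exponents $s=k$ (where $\Gamma(k+1)=k!$), so the integral representation of $\Gamma$ is essential to cover the full range $s \ge 1$ with the sharp constant; this is the only real technical obstacle.
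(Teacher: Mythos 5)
Your proof is correct. Note that the paper states Lemma \ref{Embedding exp to Lq} without any proof — it is imported from the Orlicz-space literature (this is essentially Lemma 2.3 in Ioku's work \cite{Ioku1} on heat equations with exponential nonlinearity) — so there is no in-paper argument to compare against; your write-up supplies the standard one in full. The two ingredients you isolate are exactly the right ones: attainment of the infimum in the Luxemburg norm (via monotone convergence), giving $\int_{\R^N}\bigl(e^{(|\varphi(x)|/\lambda)^p}-1\bigr)\,dx\le 1$ with $\lambda=\norm{\varphi}_{\Xi}$, and the pointwise bound $y^{s}\le\Gamma(s+1)(e^{y}-1)$ for $y\ge0$, $s\ge1$, applied with $s=q/p$ and $y=(|\varphi(x)|/\lambda)^{p}$. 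Your derivation of that bound — $\Gamma(s)e^{y}\ge y^{s-1}$ from the shifted integral representation of $\Gamma$, then monotonicity of $g(y)=\Gamma(s+1)(e^{y}-1)-y^{s}$ with $g(0)=0$ — is valid for all real $s\ge1$, which is indeed required since $q/p$ need not be an integer; the Taylor-series shortcut would only cover integer exponents, as you correctly point out.
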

\begin{lemma}\label{Embedding_Lq_Linfty}
Given $ 1\le q\le p $, we have $ \Lq\cap L^{\infty}(\R^N)\hookrightarrow \0Lep\subsetneq\Lep $. In particular, for any $ \varphi\in\Lq\cap L^{\infty}(\R^N) $ the following bound holds
\begin{align*}
\norm{\varphi}_{\Xi}\le(\log2)^{\frac{-1}{p}}\sqrbrackets{\norm{\varphi}_{L^q}+\norm{\varphi}_{L^{\infty}}}.
\end{align*}
\end{lemma}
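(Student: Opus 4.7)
The plan is to establish the three parts of the lemma in sequence: the norm inequality, the membership in $L^{\Xi}_{0}(\R^N)$, and the strict inclusion $L^{\Xi}_{0}\subsetneq L^{\Xi}$.

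First I would prove the quantitative bound. Set $\kappa := (\log 2)^{-1/p}\sqrbrackets{\norm{\varphi}_{L^q}+\norm{\varphi}_{L^{\infty}}}$ and aim to verify that $\int_{\R^N}\Xi(|\varphi(x)|/\kappa)\,dx\le 1$, so that by the definition of the Luxemburg norm $\norm{\varphi}_{\Xi}\le\kappa$. By the choice of $\kappa$ one has $|\varphi(x)|/\kappa\le (\log 2)^{1/p}$ almost everywhere, hence $s(x):=(|\varphi(x)|/\kappa)^p$ takes values in $[0,\log 2]$. The convexity of $e^s-1$ and the fact that this function vanishes at $s=0$ and equals $1$ at $s=\log 2$ give the chord bound
\[
e^{s}-1\le \frac{s}{\log 2},\qquad s\in[0,\log 2].
\]
The crucial step is then the splitting $(|\varphi|/\kappa)^p=(|\varphi|/\kappa)^q\cdot(|\varphi|/\kappa)^{p-q}$ together with $(|\varphi|/\kappa)^{p-q}\le(\log 2)^{(p-q)/p}$, which yields
\[
\Xi(|\varphi|/\kappa)\le \frac{(|\varphi|/\kappa)^q}{(\log 2)^{q/p}}.
\]
Integrating and invoking $\kappa^q\ge\norm{\varphi}_{L^q}^q/(\log 2)^{q/p}$ collapses the right-hand side to $1$, which gives the norm estimate.

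Next, to show $L^q\cap L^{\infty}\hookrightarrow L^{\Xi}_{0}$, I need to verify that the integral in the definition of $L^{\Xi}_{0}$ is finite for \emph{every} $\kappa>0$, not only for some $\kappa$. Fix an arbitrary $\kappa>0$ and set $M:=(\norm{\varphi}_{L^{\infty}}/\kappa)^p$; the convexity argument above, applied now on $[0,M]$, gives the chord estimate $e^{s}-1\le (e^M-1)s/M$, so
\[
\Xi(|\varphi|/\kappa)\le \frac{e^{M}-1}{M\kappa^p}\,|\varphi|^{p}.
\]
Since $q\le p$, the pointwise bound $|\varphi|^{p}\le \norm{\varphi}_{L^{\infty}}^{p-q}|\varphi|^{q}$ puts $|\varphi|^{p}$ in $L^{1}(\R^N)$, so the integral is finite. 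This gives $\varphi\in L^{\Xi}_{0}(\R^N)$ and in particular the embedding is well-defined.

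Finally, the strict inclusion $L^{\Xi}_{0}\subsetneq L^{\Xi}$ is verified by exhibiting a function lying in $L^{\Xi}$ but outside $L^{\Xi}_{0}$. A standard construction is a truncated logarithmic singularity: take $\varphi(x)=\parentheses{\log(1/|x|)}^{1/p}\mathbf{1}_{B(0,1/2)}(x)$, for which $e^{|\varphi|^{p}}-1=|x|^{-1}-1$ is integrable near the origin (so $\varphi\in L^{\Xi}$ with admissible $\kappa=1$), while for any $\kappa<1$ the integrand behaves like $|x|^{-1/\kappa^{p}}$, which is non-integrable; hence $\varphi\notin L^{\Xi}_{0}$. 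The main subtlety I expect to encounter is in the first step: the inequality relies on using $q\le p$ exactly once, to bound the leftover factor $(|\varphi|/\kappa)^{p-q}$ by a constant depending only on $\log 2$, and this is what quantitatively ties the $L^{\infty}$ contribution into the Luxemburg norm.
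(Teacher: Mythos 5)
Your argument is correct. Note first that the paper itself states this lemma without proof (it is imported from the Orlicz-space literature, in the spirit of Ioku and Ioku--Ruf--Terraneo), so there is no in-paper proof to compare against; your write-up is a legitimate self-contained verification. The quantitative step is exactly the right mechanism: with $\kappa=(\log2)^{-1/p}\sqrbrackets{\norm{\varphi}_{L^q}+\norm{\varphi}_{L^{\infty}}}$ one has $(|\varphi|/\kappa)^p\le\log 2$ a.e., the chord bound $e^s-1\le s/\log 2$ on $[0,\log2]$ applies, and the factorization $(|\varphi|/\kappa)^p=(|\varphi|/\kappa)^q(|\varphi|/\kappa)^{p-q}$ with $q\le p$ reduces the integral to $\norm{\varphi}_{L^q}^q/(\kappa^q(\log2)^{q/p})\le 1$, giving $\norm{\varphi}_{\Xi}\le\kappa$ by the definition of the Luxemburg norm. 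The membership in $\0Lep$ via the chord bound on $[0,M]$ with $M=(\norm{\varphi}_{L^\infty}/\kappa)^p$ and the interpolation $|\varphi|^p\le\norm{\varphi}_{L^\infty}^{p-q}|\varphi|^q$ is also sound, and it verifies the integral-based definition of $\0Lep$ that the paper adopts. The only imprecision is in the counterexample for $\0Lep\subsetneq\Lep$: for $\varphi=(\log(1/|x|))^{1/p}\mathbf{1}_{B(0,1/2)}$ the integrand $|x|^{-1/\kappa^p}$ is non-integrable near the origin only when $1/\kappa^p\ge N$, not for every $\kappa<1$ as you claim, and for $N=1$ the value $\kappa=1$ is not admissible either; but since a single divergent $\kappa>0$ suffices to exclude $\varphi$ from $\0Lep$, and some $\kappa$ (large enough, depending on $N$) always makes $\varphi\in\Lep$, the conclusion survives after this small correction.
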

\begin{lemma}\label{Kernel_apply_to_Orlicz}
Let $ p\ge1 $ and $ \al\in(0,1) $. {Then, we can find constants $ \mathscr{C}_{1,h},\mathscr{C}_{2,h},\mathscr{C}_{\Xi} $ such that the following results hold.}
\begin{enumerate}[(i)]
\item Suppose that $ h\in[1,p] $ satisfies $h>N/4 $, for any $ \varphi\in\Lh $, we have
\begin{align*}
&\norm{\mathbb Z_{1,\al}(t)\varphi}_{\Xi}\le {\mathscr{C}_{1,h}}t^{\frac{-\alpha N}{4h}}\sqrbrackets{\log\parentheses{1+t^{\frac{-\alpha N}{4}}}}^{\frac{-1}{p}}\norm{\varphi }_{L^h},\\
&\norm{\mathbb Z_{2,\al}(t)\varphi}_{\Xi}\le {\mathscr{C}_{2,h}}t^{\frac{-\alpha N}{4q}}\sqrbrackets{\log\parentheses{1+t^{\frac{-\alpha N}{4}}}}^{\frac{-1}{p}}\norm{\varphi }_{L^h}.
\end{align*}
\item For any $ \varphi\in\Lep, $ we have
\begin{align*}
\norm{\mathbb Z_{1,\al}(t)\varphi}_{\Xi}\le  \norm{\varphi }_{\Xi},\qquad\norm{\mathbb Z_{2,\al}(t)\varphi}_{\Xi}\le t^{\alpha-1}\norm{\varphi }_{\Xi}.
\end{align*}
\end{enumerate}
\end{lemma}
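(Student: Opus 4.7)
The proof splits naturally into the two parts, and I would handle part (ii) first since it is conceptually simpler and uses a Young-type convolution inequality on Orlicz spaces. The plan is to establish the following ``convolution estimate'' in $L^\Xi$: for every $K\in L^1(\R^N)$ and $\varphi\in L^\Xi(\R^N)$,
\begin{equation*}
\norm{K*\varphi}_\Xi \le \norm{K}_{L^1}\norm{\varphi}_\Xi.
\end{equation*}
The idea is to write $|K*\varphi(x)| \le \norm{K}_{L^1}\int \mu(y)|\varphi(x-y)|\,dy$ with the probability density $\mu=|K|/\norm{K}_{L^1}$; then using the convexity of $\Xi$ together with $\Xi(0)=0$ (which gives $\Xi(a z)\le a\Xi(z)$ for $0\le a\le 1$) and Jensen's inequality, one obtains $\int \Xi(|K*\varphi|/\kappa)\,dx \le \norm{K}_{L^1}\int \Xi(|\varphi|/\kappa)\,dy$. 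Choosing $\kappa=\norm{K}_{L^1}\norm{\varphi}_\Xi$ and using positive homogeneity of the Luxemburg norm then delivers the stated inequality. Applying this to $K=\mathbb{K}_{1,\al}(t)$ and $K=\mathbb{K}_{2,\al}(t)$ and invoking Lemma~\ref{lemqt} with $p=1,k=0$ to control $\norm{\mathbb{K}_{1,\al}(t)}_{L^1}$ by a constant and $\norm{\mathbb{K}_{2,\al}(t)}_{L^1}$ by a constant times $t^{\al-1}$ (as observed in the remark following that lemma) finishes part (ii).

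For part (i), I would start directly from the Luxemburg definition and exploit the Taylor expansion $\Xi(z)=e^{|z|^p}-1=\sum_{k\ge1}|z|^{pk}/k!$. Thus the condition $\int \Xi(|\mathbb{Z}_{1,\al}(t)\varphi|/\kappa)\,dx \le 1$ becomes the summation criterion
\begin{equation*}
\sum_{k=1}^\infty \frac{1}{k!\,\kappa^{pk}}\norm{\mathbb{Z}_{1,\al}(t)\varphi}_{L^{pk}}^{pk}\le 1 .
\end{equation*}
Each term is treated by Young's convolution inequality, $\norm{\mathbb{Z}_{1,\al}(t)\varphi}_{L^{pk}}\le\norm{\mathbb{K}_{1,\al}(t)}_{L^{r_k}}\norm{\varphi}_{L^h}$ with $1/r_k=1-1/h+1/(pk)$, combined with the decay estimate \eqref{K111} at $k=0$ from Lemma~\ref{lemqt}. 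This yields a bound of the form $\norm{\mathbb{Z}_{1,\al}(t)\varphi}_{L^{pk}}^{pk}\le \mathscr{C}^{pk}\,t^{\al N/4-\al N pk/(4h)}\norm{\varphi}_{L^h}^{pk}$, and after factoring out $t^{\al N/4}$ the series reduces to a Taylor series for an exponential. The $\kappa$-threshold then comes from solving $t^{\al N/4}(e^B-1)\le 1$ with $B$ proportional to $\norm{\varphi}_{L^h}^p\,t^{-\al N/(4h)}/\kappa^p$, which after extracting $\kappa$ produces precisely the logarithmic factor $[\log(1+t^{-\al N/4})]^{-1/p}$ in the stated bound. The argument for $\mathbb{Z}_{2,\al}$ is parallel, using \eqref{K22222} instead of \eqref{K111} and generating the expected extra $t^{\al-1}$-type contribution.

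The main technical obstacle is ensuring that the series actually sums and that the multiplicative constant remains finite. This requires two things: first, that Lemma~\ref{lemqt} is applicable for every $k$, which is exactly where the hypothesis $h>N/4$ is used --- since $1/r_k\to 1-1/h$ as $k\to\infty$, the bound $r_k<N/(N-4)$ (needed for $k=0$ in Lemma~\ref{lemqt}) is asymptotically equivalent to $h>N/4$; second, that the constants $\mathscr{C}_{0,r_k}(\al,N)$ are uniformly bounded in $k$. The latter follows because $\Theta_{r_k,N}$ is majorized by the $k$-independent constant $\Theta_N$ from the remark after Lemma~\ref{lemqt}, and the Gamma-function ratio $\Gamma(1-N/4+N/(4r_k))/\Gamma(1-\al N/4+\al N/(4r_k))$ varies continuously over a compact range of $r_k$ once $h>N/4$. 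Handling these uniformity issues carefully is what makes the argument go through; once they are controlled, the rest is a clean combination of Young's inequality, Lemma~\ref{lemqt}, and the power-series manipulation of the Orlicz modular.
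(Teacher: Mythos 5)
Your proof of part (i) is essentially the paper's own argument: expand the Orlicz modular as $\sum_k \|\cdot\|_{L^{pk}}^{pk}/(k!\kappa^{pk})$, control each $L^{pk}$ norm by Young's convolution inequality together with the $k=0$ case of Lemma \ref{lemqt}, factor out $t^{\al N/4}$, resum the exponential, and solve the resulting inequality for $\kappa$ to produce the logarithmic factor; you also correctly flag the two points the paper must (and does) address, namely that $h>N/4$ is exactly what keeps Lemma \ref{lemqt} applicable along the whole sequence of exponents $r_k$ and that the constants $\mathscr C_{0,r_k}$ must be bounded uniformly in $k$ (the paper does this via the $p$-independent bound $\Theta_N$ and the Gautschi inequality). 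For part (ii) you deviate slightly: the paper reuses the same power-series device, comparing $\|t^{1-\al}\mathbb Z_{2,\al}(t)\varphi\|_{L^{pj}}\le\|\varphi\|_{L^{pj}}$ term by term, whereas you prove the general Orlicz--Young inequality $\|K*\varphi\|_{\Xi}\le\|K\|_{L^1}\|\varphi\|_{\Xi}$ by Jensen's inequality applied to the probability density $|K|/\|K\|_{L^1}$. Both routes rest on the same input (the $L^1$ kernel bounds $\|\mathbb K_{1,\al}(t)\|_{L^1}\le1$ and $\|\mathbb K_{2,\al}(t)\|_{L^1}\le t^{\al-1}$ from Lemma \ref{lemqt} and the remark following it); your convexity argument is marginally cleaner and works for an arbitrary Orlicz function rather than the specific $\Xi(z)=e^{|z|^p}-1$, at the cost of a small bookkeeping step in normalizing $\kappa$ via the homogeneity of the Luxemburg norm, which you gesture at correctly. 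I see no gap.
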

\begin{proof}
Firstly, by using Young's convolution inequality, there exists a constant $ q\in[1,\infty] $ such that
\begin{align}
\norm{\mathbb{Z}_{i,\al}(t)\varphi}_{L^p}\le\norm{\mathbb{K}_{i,\al}(t)}_{L^q}\norm{\varphi}_{L^h}.
\end{align}
Then, thanks to Lemma \ref{lemqt}, we have
\begin{align*}
\norm{\mathbb{Z}_{1,\al}(t)\varphi}_{L^p}\le \frac{\Theta_{N}\Gamma\parentheses{1-\frac{ N}{4}\parentheses{\frac{1}{h}-\frac{1}{p}}}}{\Gamma\parentheses{1-\frac{ \alpha  N}{4}\parentheses{\frac{1}{h}-\frac{1}{p}}}}t^{\frac{- \alpha  N}{4}\parentheses{\frac{1}{h}-\frac{1}{p}}}\norm{\varphi}_{L^h},\\
\norm{t^{1-\alpha}\mathbb{Z}_{2,\al}(t)\varphi}_{L^p}=t^{1-\alpha}\norm{\mathbb{Z}_{2,\al}(t)\varphi}_{L^p}\le \frac{\alpha \Theta_{N}\Gamma\parentheses{2-\frac{ N}{4}\parentheses{\frac{1}{h}-\frac{1}{p}}}}{\Gamma\parentheses{2-\frac{ \alpha  N}{4}\parentheses{\frac{1}{h}-\frac{1}{p}}}}t^{\frac{-\alpha  N}{4}\parentheses{\frac{1}{h}-\frac{1}{p}}}\norm{\varphi}_{L^h}.
\end{align*}

We can show that the constants on the right-hand side of the above estimates can be bounded by two constants $ \mathscr{C}_{1,h},\mathscr{C}_{2,h} $ that are independent of $ p $, respectively. In fact, by properties of the Gamma function when $ 0<\frac{N}{4}\parentheses{\frac{1}{h}-\frac{1}{p}}<1<\frac{29}{20} $, we obtain
\begin{align*}
\norm{\mathbb{Z}_{1,\al}(t)\varphi}_{L^p}\le \Theta_{N}\Gamma\parentheses{1-\frac{ N}{4h}}t^{\frac{- \alpha  N}{4}\parentheses{\frac{1}{h}-\frac{1}{p}}}\norm{\varphi}_{L^h}.
\end{align*}
On the other hand, the Gautschi inequality implies
\begin{align*}
\frac{\Gamma\parentheses{2-\frac{N}{4}\parentheses{\frac{1}{h}-\frac{1}{p}}}}{\Gamma\parentheses{2-\frac{\alpha N}{4}\parentheses{\frac{1}{h}-\frac{1}{p}}}}\le\sqrbrackets{1-\frac{\alpha N}{4}\parentheses{\frac{1}{h}-\frac{1}{p}}}^{\frac{(\alpha-1)N}{4}\parentheses{\frac{1}{h}-\frac{1}{p}}}
\le\parentheses{1-\frac{\alpha N}{4h}}^{\frac{(\alpha-1)N}{4h}}.
\end{align*}
It follows that
\begin{align*}
\norm{t^{1-\alpha}\mathbb{Z}_{2,\al}(t)\varphi}_{L^p}\le \al\Theta_{N}\parentheses{1-\frac{\alpha N}{4h}}^{\frac{(\alpha-1)N}{4h}}t^{\frac{-\alpha  N}{4}\parentheses{\frac{1}{h}-\frac{1}{p}}}\norm{\varphi}_{L^h}.
\end{align*}
We are now ready to verify our main statements. Because the techniques are the same, we will present only the proof for the second one, $ \mathbb{Z}_{2,\al}(t)\varphi $. We note that for $ j\ge1 $
\begin{align*}
\norm{t^{1-\alpha}\mathbb{Z}_{2,\al}(t)\varphi}_{L^{pj}}^{pj}
\le \mathscr{C}_{2,h}^{pj}t^{\frac{-\alpha Npj}{4}\parentheses{\frac{1}{h}-\frac{1}{pj}}}\norm{\varphi }_{L^h}^{pj}\le t^{\frac{\alpha N}{4}}\sqrbrackets{\mathscr{C}_{2,h}t^{\frac{-\alpha N}{4h}}\norm{\varphi }_{L^h}}^{pj}.
\end{align*}
Then, the Taylor expansion of the exponential leads us to 
\begin{align}\label{SP-Main Inequality 2}
\int_{\R^N}\sqrbrackets{\exp\parentheses{\frac{\left|t^{1-\alpha}\mathbb{Z}_{2,\al}(t)\varphi(x)\right|^p}{\kappa^p}}-1}dx &=\sumuse\frac{\norm{t^{1-\alpha}\mathbb{Z}_{2,\al}(t)\varphi}_{L^{pj}}^{pj}}{j!\kappa^{pj}}\nn\\
&\le t^{\frac{\alpha N}{4}}\sumuse\frac{\sqrbrackets{\mathscr{C}_{2,h}t^{\frac{-\alpha N}{4h}}\norm{\varphi }_{L^h}}^{pj}}{j!\kappa^{pj}} \\
&\nonumber=t^{\frac{\alpha N}{4}}\sqrbrackets{\exp\parentheses{\frac{\mathscr{C}_{2,h}t^{\frac{-\alpha N}{4h}}\norm{\varphi }_{L^h}}{\kappa}}^p-1}. 
\end{align}
Next, assume that the right hand side of the above estimate is less than or equal $ 1 $. Then, we can easily find that
\begin{align*}
\mathscr{C}_{2,h}t^{\frac{-\alpha N}{4h}}\sqrbrackets{\log\parentheses{1+t^{\frac{-\alpha N}{4}}}}^{\frac{-1}{p}}\norm{\varphi }_{L^h}\le\kappa.
\end{align*}
In view of \eqref{SP-Main Inequality 2}, if we set
\begin{align*}
&A:=\left\{\kappa>0;\int_{\R^N}\sqrbrackets{\exp\parentheses{\frac{\left|\mathbb{Z}_{2,\al}(t)\varphi(x)\right|^p}{\kappa^p}}-1}dx\le1\right\},\\
&B:=\left\{\kappa>0;\mathscr{C}_{2,h}t^{\frac{-\alpha N}{4h}}\sqrbrackets{\log\parentheses{1+t^{\frac{-\alpha N}{4}}}}^{\frac{-1}{p}}\norm{\varphi }_{L^h}\le\kappa\right\},
\end{align*} 
the cover result $ B\subset A $ holds. This implies that 
\begin{align*}
\inf A\le\inf B=\mathscr{C}_{2,h}t^{\frac{-\alpha N}{4h}}\sqrbrackets{\log\parentheses{1+t^{\frac{-\alpha N}{4}}}}^{\frac{-1}{p}}\norm{\varphi }_{L^h}.
\end{align*}
We obtain the first results of this lemma. 	To prove the remaining result, we only need   to modify slightly inequality \eqref{SP-Main Inequality 2} in the following way
\begin{align*}
\int_{\R^N}\sqrbrackets{\exp\parentheses{\frac{\left|t^{1-\alpha}\mathbb{Z}_{2,\al}(t)\varphi(x)\right|^p}{\kappa^p}}-1}dx&=\sumuse\frac{\norm{t^{1-\alpha}\mathbb{Z}_{2,\al}(t)\varphi}_{L^{pj}(\R^N)}^{pj}}{j!\kappa^{pj}}\\
&\le \sumuse\frac{\norm{\varphi }_{L^{pj}(\R^N)}^{pj}}{j!\kappa^{pj}}=\exp\parentheses{\frac{\norm{\varphi }_{L^p}}{\kappa}}^p-1. 
\end{align*}
Then, our statements follow.
\end{proof}

\begin{proposition}\label{ContinuityProposition}
Assume that $ \varphi\in\0Lep $. Then, we have
\begin{align*}
\mathbb{Z}_{1,\al}(t)\varphi\in C\parentheses{[0,T];\0Lep}.
\end{align*}	
\end{proposition}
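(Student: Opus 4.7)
The plan is to exploit the density characterization $L^{\Xi}_0(\R^N)=\overline{C_0^{\infty}(\R^N)}^{\Lep}$ noted after Definition~\ref{Main_Orlicz_Space} together with the contraction estimate of Lemma~\ref{Kernel_apply_to_Orlicz}(ii). The strategy has two stages: first establish the claim for the smooth dense core, then upgrade by uniform approximation.

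First I would handle $\psi\in C_0^{\infty}(\R^N)$. Such a $\psi$ lies in $\Lp\cap C_0(\R^N)$, so Theorem~\ref{Continuous_uni_operator_topology} gives
\[
\mathbb{Z}_{1,\al}(\cdot)\psi\in C\bigl([0,T];\Lp\bigr)\cap C\bigl([0,T];C_0(\R^N)\bigr).
\]
Consequently $\mathbb{Z}_{1,\al}(t)\psi\in \Lp\cap L^{\infty}(\R^N)$ for each $t$, and Lemma~\ref{Embedding_Lq_Linfty} (with $q=p$) puts $\mathbb{Z}_{1,\al}(t)\psi$ into $\0Lep$ while also providing the quantitative bound
\[
\bigl\|\mathbb{Z}_{1,\al}(t)\psi-\mathbb{Z}_{1,\al}(s)\psi\bigr\|_{\Xi}\le (\log 2)^{-1/p}\Bigl(\bigl\|\mathbb{Z}_{1,\al}(t)\psi-\mathbb{Z}_{1,\al}(s)\psi\bigr\|_{L^p}+\bigl\|\mathbb{Z}_{1,\al}(t)\psi-\mathbb{Z}_{1,\al}(s)\psi\bigr\|_{L^{\infty}}\Bigr),
\]
whose right-hand side tends to $0$ as $s\to t$ by Theorem~\ref{Continuous_uni_operator_topology}. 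Hence $\mathbb{Z}_{1,\al}(\cdot)\psi\in C([0,T];\0Lep)$.

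Second, for general $\varphi\in\0Lep$ I would choose $\psi_n\in C_0^{\infty}(\R^N)$ with $\|\psi_n-\varphi\|_{\Xi}\to 0$. By the linearity of $\mathbb{Z}_{1,\al}(t)$ and Lemma~\ref{Kernel_apply_to_Orlicz}(ii),
\[
\sup_{t\in[0,T]}\bigl\|\mathbb{Z}_{1,\al}(t)\psi_n-\mathbb{Z}_{1,\al}(t)\varphi\bigr\|_{\Xi}\le \|\psi_n-\varphi\|_{\Xi}\longrightarrow 0.
\]
Thus $\mathbb{Z}_{1,\al}(\cdot)\varphi$ is the uniform limit on $[0,T]$ of the sequence of continuous $\0Lep$-valued maps $\mathbb{Z}_{1,\al}(\cdot)\psi_n$. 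Since $\0Lep$ is, by construction, a closed subspace of $\Lep$, the limit is again $\0Lep$-valued, and the uniform limit of continuous maps is continuous, giving $\mathbb{Z}_{1,\al}(\cdot)\varphi\in C([0,T];\0Lep)$.

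The step I expect to be most delicate is the continuity at the endpoint $t=0$: Theorem~\ref{Continuous_uni_operator_topology} is stated on $\R^+$, so one must justify $\mathbb{Z}_{1,\al}(t)\psi\to\psi$ in $\Lp$ and in $C_0(\R^N)$ as $t\to 0^+$ for $\psi\in C_0^{\infty}$. This can be pulled from the Fourier-side identity $\widehat{\mathbb{Z}_{1,\al}(t)\psi}(\xi)=E_{\al,1}(-t^{\al}|\xi|^4)\widehat{\psi}(\xi)$ together with $E_{\al,1}(0)=1$ and dominated convergence, after which the $L^p\cap L^{\infty}$ convergence for smooth compactly supported data is standard and the density argument above closes the proof.
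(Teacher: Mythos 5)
Your proof is correct and follows essentially the same route as the paper's: approximate $\varphi$ by $C_0^{\infty}(\R^N)$ functions, use the non-expansiveness $\|\mathbb{Z}_{1,\al}(t)\,\cdot\,\|_{\Xi}\le\|\cdot\|_{\Xi}$ from Lemma~\ref{Kernel_apply_to_Orlicz} uniformly in $t$, and obtain continuity for smooth data from Theorem~\ref{Continuous_uni_operator_topology} combined with Lemma~\ref{Embedding_Lq_Linfty}; your ``uniform limit of continuous maps'' packaging is the same three-term triangle-inequality argument the paper writes out. Your extra care about continuity at $t=0$ addresses a point the paper's proof glosses over (it only takes $t_1,t_2>0$), so that is a refinement rather than a genuinely different approach.
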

\begin{proof}
Since $ \varphi\in\0Lep $, there exists a sequence $ \{\varphi_n\}_{n\in\N}\subset C_{0}^{\infty}(\R^N) $ such that $\varphi_n$ converges to $ \varphi $ with respect to $\Lep$ norm. This implies that, for any $ t>0 $, $ \mathbb{Z}_{1,\al}(t)\varphi_n$ will converge to $ \mathbb{Z}_{1,\al}(t)\varphi. $ Indeed, by applying Lemma \ref{Kernel_apply_to_Orlicz}, we have
\begin{align*}
\norm{\mathbb{Z}_{1,\al}(t)\varphi_n-\mathbb{Z}_{1,\al}(t)\varphi}_{\Xi}\le\norm{\varphi_n-\varphi}_{\Xi}\xrightarrow{n\rightarrow\infty}0.
\end{align*}
By taking two number $ t_1,t_2>0 $,  the triangle inequality implies
\begin{align*}
\begin{array}{lcr}
&\dis\norm{\mathbb{Z}_{1,\al}(t_2)\varphi-\mathbb{Z}_{1,\al}(t_1)\varphi}_{\Xi}&\\[0.3cm]
&\dis\le\norm{\mathbb{Z}_{1,\al}(t_2)\varphi_n-\mathbb{Z}_{1,\al}(t_2)\varphi}_{\Xi}			+\norm{\mathbb{Z}_{1,\al}(t_1)\varphi_n-\mathbb{Z}_{1,\al}(t_1)\varphi}_{\Xi}&\\[0.3cm]
&\dis+\norm{\mathbb{Z}_{1,\al}(t_2)\varphi_n-\mathbb{Z}_{1,\al}(t_1)\varphi_n}_{\Xi}&.
\end{array}
\end{align*}
Combining Lemma \ref{Kernel_apply_to_Orlicz}, the definition of $ \0Lep $ and the application of Theorem \ref{Continuous_uni_operator_topology} {for $ \Lp $ and $ C_0(\mathbb{R}^N) $}
{\begin{align*}
\begin{cases}
\dis\lim\limits_{t_2\to t_1}\norm{\mathbb{Z}_{1,\al}(t_2)\varphi_n-\mathbb{Z}_{1,\al}(t_1)\varphi_n}_{L^p}=0,\quad&\varphi_n\in C_{0}^{\infty}(\R^N),\\[0.3cm]
\dis\lim\limits_{t_2\to t_1}\norm{\mathbb{Z}_{1,\al}(t_2)\varphi_n-\mathbb{Z}_{1,\al}(t_1)\varphi_n}_{L^\infty}=0,&\varphi_n\in C_{0}^{\infty}(\R^N),
\end{cases}
\end{align*}}
and  
\begin{align*}
\begin{cases}
\dis\lim\limits_{n\to\infty}\norm{\mathbb{Z}_{1,\al}(t_1)\varphi_n-\mathbb{Z}_{1,\al}(t_1)\varphi}_{\Xi}=0,\\[0.3cm]
\dis\lim\limits_{n\to\infty}\norm{\mathbb{Z}_{1,\al}(t_2)\varphi_n-\mathbb{Z}_{1,\al}(t_2)\varphi}_{\Xi}=0.
\end{cases}
\end{align*}
Consequently, by an appropriate choice of $ n $, the desired conclusion of this proposition can be drawn easily.
\end{proof}

\section{Time fractional biharmonic equation with exponential nonlinearity}
In this section, we investigate the IVP for the time-fractional biharmonic equation with exponential nonlinearity
\begin{align}\tag{P1}\label{Main Problem}
\begin{cases}
\dis \partial^\al_{0|t}\p(t,x)+\Delta^2\p(t,x)=G(u(t,x)),\qquad &\mbox{in}\quad\R^+\times\R^N,\\
\dis\p(0,x)=\p_0(x),&\mbox{in} \quad \R^N,
\end{cases}
\end{align} 
with the following assumptions of the initial function: 

\noindent\textit{\textbf{Assumption 1}}: The initial function $ \p_0 $ belongs to  $ \Lp\cap {C_0(\R^N)} $. 

\noindent\textit{\textbf{Assumption 2}}: The  initial function $ \p_0 $  belongs to $ \Lep $ or $ \0Lep$. 

\subsection{Unique existence of mild solution under the first assumption for the initial function}
In this part, we investigate Problem \eqref{Main Problem} with the assumption that  $ u_0 $ belongs to the space $ \Lp\cap{C_0(\R^N)} $.  
\begin{theorem}\label{LocalExistenceTheoremCase1}
	Assume that $ \p_0\in\Lep\cap{C_0(\R^N)} $.  Then, there exists a unique solution of 
	Problem \eqref{Main Problem} that belongs to $ C\parentheses{(0,T];\Lep\cap{C_0(\R^N)}}. $ 
\end{theorem}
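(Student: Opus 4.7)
The plan is to realize the mild solution as the unique fixed point of
\[
\Phi(u)(t) := \mathbb{Z}_{1,\alpha}(t) u_0 + \int_0^t \mathbb{Z}_{2,\alpha}(t-\tau) G(u(\tau))\,d\tau
\]
on the complete metric space $X_T := C([0,T]; \Lep \cap C_0(\R^N))$ endowed with the norm $\vertiii{u} := \sup_{t\in[0,T]}\bigl(\norm{u(t)}_{\Xi} + \norm{u(t)}_{L^\infty}\bigr)$. The hypothesis $u_0 \in \Lep \cap C_0(\R^N)$ yields $u_0 \in \Lp \cap L^\infty(\R^N)$ via Lemma \ref{Embedding exp to Lq}, hence $u_0 \in \0Lep$ by Lemma \ref{Embedding_Lq_Linfty}; so Proposition \ref{ContinuityProposition} together with Theorem \ref{Continuous_uni_operator_topology} guarantee that $t \mapsto \mathbb{Z}_{1,\alpha}(t) u_0$ is continuous with values in $\Lep \cap C_0(\R^N)$. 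The linear part of $\Phi(u)$ is then controlled uniformly in $t$ by $\norm{u_0}_{\Xi} + \norm{u_0}_{L^\infty}$, using Lemma \ref{Kernel_apply_to_Orlicz}(ii) for the Orlicz component and Young's convolution inequality together with the $L^1$-bound $\norm{\mathbb{K}_{1,\alpha}(t)}_{L^1}\le 1$ coming from Lemma \ref{lemqt} at $p=1$, $k=0$ for the $L^\infty$ component.

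Next, I fix a radius $R > \norm{u_0}_{\Xi} + \norm{u_0}_{L^\infty}$ and restrict attention to the closed ball $B_R := \{u \in X_T : \vertiii{u} \le R\}$. Since $G(0) = 0$, inequality \eqref{Property of J} with $v = 0$ produces the pointwise bound $|G(u(\tau,x))| \le L|u(\tau,x)|^m e^{\kappa|u(\tau,x)|^p}$, so for $u \in B_R$ one immediately has $\norm{G(u(\tau))}_{L^\infty} \le L R^m e^{\kappa R^p}$. For the Orlicz norm I would invoke the embedding $\Lep \hookrightarrow L^{mp}(\R^N)$ from Lemma \ref{Embedding exp to Lq} to bound $\norm{G(u(\tau))}_{L^p} \le L e^{\kappa R^p}\norm{u(\tau)}_{L^{mp}}^m$, and then appeal to Lemma \ref{Embedding_Lq_Linfty} with $q = p$ to conclude $\norm{G(u(\tau))}_{\Xi} \le C_R$ uniformly in $\tau \in [0,T]$. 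Applying Lemma \ref{Kernel_apply_to_Orlicz}(ii) to the integrand, the Orlicz-norm of the nonlinear part of $\Phi(u)(t)$ is bounded by $C_R \int_0^t (t-\tau)^{\alpha - 1}\,d\tau = C_R T^\alpha / \alpha$, with an analogous $L^\infty$-bound from Young's convolution inequality and Lemma \ref{lemqt}. Choosing $T$ sufficiently small turns $\Phi$ into a self-map on $B_R$.

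The contraction property follows from the full Lipschitz-type bound \eqref{Property of J}: by H\"older's inequality together with the same chain of embeddings, $\norm{G(u(\tau)) - G(v(\tau))}_{\Xi}$ is controlled by a constant (depending on $R$) times $\vertiii{u - v}$, so that $\vertiii{\Phi(u) - \Phi(v)} \le C_R T^{\alpha}\vertiii{u - v}$, rendering $\Phi$ a strict contraction for $T$ small. Banach's fixed point theorem then provides the unique mild solution in $B_R$; time continuity of the nonlinear part is verified via Lebesgue's dominated convergence theorem applied to the integrand in \eqref{MILD SOLUTION FORMULAE}. The main obstacle, as I see it, is obtaining the uniform bound on $\norm{G(u)}_{\Xi}$: the exponential factor $e^{\kappa|u|^p}$ in \eqref{Property of J} cannot be tamed by the Orlicz norm alone, and this forces the working norm on $X_T$ to include the $L^\infty$ component and to exploit the embedding $\Lep \hookrightarrow L^{mp}$ in order to absorb the polynomial factor $|u|^m$. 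After this central estimate is in place, the remaining steps are a routine adaptation of the standard contraction scheme for semilinear evolution equations.
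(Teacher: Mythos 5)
Your proposal is correct and follows essentially the same route as the paper: a Banach fixed-point argument in $C$ with values in $\Lep\cap C_0(\R^N)$ under the combined $\Xi$ plus $L^\infty$ norm, taming the exponential factor in \eqref{Property of J} by the $L^\infty$ component and the polynomial factor via the embedding $\Lep\hookrightarrow\Lmp$, with the smoothing bounds of Lemmas \ref{lemqt} and \ref{Kernel_apply_to_Orlicz} and dominated convergence for time continuity. The only (immaterial) difference is that the paper works on a ball centered at $\mathbb{Z}_{1,\al}(t)u_0$ rather than at the origin.
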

\begin{proof}
The proof is begun by fixing a constant $\Im>0 $ and choosing a small time $ T<\sqrt[\al]{\al\Im_1^{-1}} $, where $ \Im_1 $ is defined in \eqref{definition of Im_1}. Next, we consider the following space
\begin{align*}
\mathbf{A}:=\left\{u\in C\parentheses{(0,T];\Lep\cap{C_0(\R^N)}};\sup_{t\in(0,T]}\norm{u(t)-\mathbb{Z}_{1,\al}(t)\p_{0}}_{L^{\infty}+\Xi}\le\Im \right\},
\end{align*}
and the operator $ \mathscr F:\mathbf{A}\rightarrow\mathbf{A} $ given by
\begin{align}\label{F formula}
\mathscr{F}u(t)&=\mathbb{Z}_{1,\al}(t)u_{0}+\int_{0}^{t}\mathbb{Z}_{2,\al}(t-\tau)G(u(\tau))d\tau.
\end{align}
By Young's convolution inequality and Lemma \ref{lemqt}, we have
\begin{align*}
\Bigg\|\int_{\mathbb R^N}   \mathbb K_{1,\al} (t,\cdot-y) u_0(y)dy\Bigg\|_{L^\infty}&=\Big\|  \mathbb K_{1,\al} (t) * u_0 \Big\|_{L^\infty}\le \|  \mathbb K_{1,\al} (t) \|_{L^1(\mathbb R^N)}  \|   u_0 \|_{L^\infty} \le  \|u_0 \|_{L^\infty}. 
\end{align*}
Then, for every $ u\in\mathbf{A} $, the following estimate holds
\begin{align}
\norm{u(t)}_{L^{\infty}+\Xi}&\le\norm{\mathbb{Z}_{1,\al}(t)\p_{0}}_{L^{\infty}+\Xi}+\norm{u(t)-\mathbb{Z}_{1,\al}(t)\p_{0}}_{L^{\infty}+\Xi}\nn\\
&\le2(\log2)^{\frac{-1}{p}}\norm{\p_{0}}_{L^{\infty}+\Xi}+\Im=:\Im_0.
\end{align} 
Our main goal is to prove that the integral equation \eqref{F formula} has a unique solution by the fixed point argument. To this end, we present the following two steps.
	
\noindent\textbf{Step 1}. From \eqref{Property of J}, for any $ u\in\mathbb{A} $ and $ t>0 $, we deduce
\begin{align}\label{Boundary of J case 1}
\begin{cases}
\dis\norm{G(u(t))}_{L^{\infty}}\le L\norm{u(t)}^{m}_{L^{\infty}}e^{\kappa\norm{u(t)}^p_{L^{\infty}}}\le L\Im_0^me^{\kappa\Im_0^p},\\
\dis\norm{G(u(t))}_{L^p}\le L\norm{u(t)}^{m}_{L^{mp}}e^{\kappa\norm{u(t)}^p_{L^\infty}}\le L\Im_0^me^{\kappa\Im_0^p}\parentheses{\Gamma(m+1)}^{\frac{1}{p}},
\end{cases}
\end{align}
where we have used Lemma \ref{Embedding exp to Lq} to achieve the second inequality. It follows that
\begin{align}\label{Boundary of J}
\nonumber\norm{G(u(t))}_{L^\infty+\Xi}&\le(\log2)^{\frac{-1}{p}}\norm{{G}(u(t))}_{L^p+L^\infty}+\norm{{G}(u(t))}_{L^\infty}\le L\parentheses{2+\parentheses{\Gamma(m+1)}^{\frac{1}{p}}}\Im_0^m(\log2)^{\frac{-1}{p}}e^{\kappa\Im_0^p}.
\end{align} 
In addition, by using Lemma \ref{Embedding_Lq_Linfty} and applying Theorem \ref{Continuous_uni_operator_topology} with respect to the spaces $ \Lp,C_0(\mathbb{R}^N) $, for any $ \tau,t\in(0,T], $ we deduce   
\begin{align}
t^{1-\al}\mathbb{Z}_{2,\al}(t)G(u(\tau))\in C\parentheses{(0,T];\Lep\cap{C_0(\R^N)}}.
\end{align}
Next, by taking a positive number $ h $ and $ r\in\{t,t+h\} $, for any $ t\ge\xi $, we obtain
\begin{align*}
(t+h-\tau)^{\alpha-1}\norm{(r-\tau)^{1-\al}\mathbb{Z}_{2,\al}(r-\tau)G(u(\tau))}_{L^\infty+\Xi}\le(t-\tau)^{\alpha-1}\norm{ G(u(\tau))}_{L^\infty+\Xi}\le C(t-\tau)^{\alpha-1}.
\end{align*}
Then, applying the Lebesgue dominated convergence theorem, 
\begin{align}\label{Case1Limit1}
\lim\limits_{h\to 0}\norm{\int_{0}^{t}(t+h-\xi)^{\alpha-1}\sqrbrackets{\frac{\mathbb{Z}_{2,\al}(t+h-\tau)}{(t+h-\tau)^{\al-1}}-\frac{\mathbb{Z}_{2,\al}(t-\tau)}{(t-\tau)^{\al-1}}}G(u(\tau))d\tau}_{L^\infty+\Xi}=0. 
\end{align}
On the other hand, using the fact that $ \lim\limits_{h\to 0}\left|(t+h)^{\alpha}-h^{\alpha}-t^{\alpha}\right|=0 $ and  \eqref{Boundary of J}, we further find that 
\begin{align}\label{Case1Limit2}
&\lim\limits_{h\to 0}\norm{\int_{0}^{t}\sqrbrackets{(t+h-\xi)^{\alpha-1}-(t-\xi)^{\alpha-1}}\frac{\mathbb{Z}_{2,\al}(t+h-\tau)}{(t+h-\tau)^{\al-1}}G(u(\tau))d\tau}_{L^\infty+\Xi}=0.
\end{align}
For the purpose of proving {that} the integral term on the RHS of \eqref{F formula} is continuous on $ (0,T] $, we also need the upcoming fact 
\begin{align*}
&\int_{t}^{t+h}\norm{\mathbb{Z}_{2,\al}(t+h-\tau)G(u(\tau))d\tau}_{L^\infty+\Xi}d\tau\le Ch^{\al}\xrightarrow{\quad h\rightarrow0\quad}0.
\end{align*}
Taking into consideration the above limit results and applying Theorem \ref{Continuous_uni_operator_topology} to $ \p_{0}\in\Lp\cap{C_0(\R^N)}$, we can claim that $ \mathscr{F}u(t) $ is a continuous mapping on $ (0,T].$
 
\noindent\textbf{Step 2.} By using the H\"older inequality, for a constant $ \aleph\in[1,\infty) $ and $ u_1,u_2\in\mathbf{A} $,  we have
\begin{align*}
\norm{G(u_1(\tau))-G(u_2(\tau))}_{L^p}&\le L\sqrbrackets{\sum_{j=1,2}\norm{|u_j(\tau)|^{m-1}e^{\kappa|u_j(\tau)|^p}}_{L^{\frac{\aleph p}{\aleph-1}}}}\norm{u_1(\tau)-u_2(\tau)}_{L^{\aleph p}}.
\end{align*}
Since the embedding $ \Lep\hookrightarrow\Lq $ holds for any $ q\in[p,\infty) $, and $ u_j(\tau)\in{C_0(\R^N)} ( j=1,2) $, the above inequality becomes
\begin{align*}
\norm{G(u_1(\tau))-G(u_2(\tau))}_{L^p}&\le2L\parentheses{\Gamma(\aleph+1)}^{\frac{1}{\aleph p}}\parentheses{\Gamma\parentheses{\frac{\aleph(m-1)}{\aleph-1}+1}}^{\frac{\aleph-1}{\aleph p}}\Im_0^{m-1}e^{\kappa\Im_0^p}\norm{u_1(\tau)-u_2(\tau)}_{\Xi}.
\end{align*}
On the other hand, it is a simple matter to check that
\begin{align*}
\norm{G(u_1(\tau))-G(u_2(\tau))}_{L^\infty}&\le L\sqrbrackets{\sum_{j=1,2}\|u_j(\tau)\|_{L^\infty}^{m-1}e^{\kappa\|u_j(\tau)\|_{L^\infty}^p}}\norm{u_1(\tau)-u_2(\tau)}_{L^\infty}\nonumber\\
&\le2L \Im_0 ^{m-1}e^{\kappa \Im_0 ^p}\norm{u_1(\tau)-u_2(\tau)}_{L^\infty}.
\end{align*}
The two above results help us to deduce
\begin{align}\label{definition of Im_1}
&\norm{G(u_1(\tau))-G(u_2(\tau))}_{L^\infty+\Xi}\nonumber\\
&\le2L\sqrbrackets{2+\parentheses{\Gamma(\aleph+1)}^{\frac{1}{\aleph p}}\parentheses{\Gamma\parentheses{\frac{\aleph(m-1)}{\aleph-1}+1}}^{\frac{\aleph-1}{\aleph p}}}(\log2)^{\frac{-1}{p}}\nonumber \Im_0 ^{m-1}e^{\kappa \Im_0 ^p}\norm{u_1(\tau)-u_2(\tau)}_{L^\infty+\Xi}\\
&=:\Im_1\norm{u_1(\tau)-u_2(\tau)}_{L^\infty+\Xi}.
\end{align}
Also, Lemma \ref{lemqt} shows us that
\begin{align*}
&\Bigg\|\int_{\mathbb R^N}   \mathbb K_{2,\al} (t-\tau,x-y) \sqrbrackets{G(u_1(\tau,y))-G(u_2(\tau,y))}dy\Bigg\|_{L^\infty} \\ &\le \|\mathbb K_{2,\al} (t-\tau,x) \|_{L^1(\mathbb R^N)}\norm{G(u_1(\tau))-G(u_2(\tau))}_{L^{\infty}}\nonumber\\ &\le(t-\tau)^{\al-1}\norm{G(u_1(\tau))-G(u_2(\tau))}_{L^{\infty}}. 
\end{align*}
From this result and Lemma \ref{Kernel_apply_to_Orlicz}, we find that  
\begin{align*}
\norm{\mathscr{F}u_1(t)-\mathscr{F}u_2(t)}_{L^\infty+\Xi}&\le\int_{0}^{t}\norm{\mathbb{Z}_{2,\al}(t-\tau)\sqrbrackets{G(u_1(\tau))-G(u_2(\tau))}}_{L^\infty+\Xi}d\tau\nonumber\\
&\le\int_{0}^{t}(t-\tau)^{\alpha-1}\norm{G(u_1(\tau))-G(u_2(\tau))}_{L^\infty+\Xi}d\tau \\
&\le \frac{T^{\al}\Im_1}{\al}\sup_{t \in(0, T]}\norm{u_1(t)-u_2(t)}_{L^\infty+\Xi}.
\end{align*}
{By choosing $ u_2\equiv0 $ and $ u_1\in\mathbf{A} $ in Step 2, we derive
\begin{align*}
\sup_{t \in(0, T]}\norm{\mathscr{F}u_1(t)-\mathbb{Z}_{1,\al}(t)\p_{0}}_{L^{\infty}+\Xi}\le\frac{T^{\al}\Im_1}{\al}\sup_{t \in(0, T]}\norm{u_1(t)}_{L^\infty+\Xi}\le\Im.
\end{align*}}
{This result along with Step 1 show that $ \mathscr{F} $ is invariant on $ \mathbf{A} $}. Furthermore, it is obvious that $\mathbf{A}$ is a complete metric space with the metric
\begin{align*}
d(u,v):=\sup_{t \in(0, T]}\norm{u(t)-v(t)}_{L^\infty+\Xi}.
\end{align*}

Therefore, the Banach principle argument can be applied to conclude that $ \mathscr{F} $ has a unique fixed point, which means that there exists a unique solution of the Problem \eqref{Main Problem} belonging to $ \mathbf{A} $.
\end{proof}
\begin{lemma}\label{ExtensionLemma}
Let $ \p:(0,T]\rightarrow\Lep\cap{C_0(\R^N)} $ be the unique mild solution of Problem \eqref{Main Problem}. Then there exists a unique continuous extension $ \p^* $ of $u$ on $ (0,T+h], $ for some $ h>0. $
\end{lemma}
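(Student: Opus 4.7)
The plan is to reinitialize at $t=T$, but since the Caputo derivative carries memory, the extension cannot be obtained by simply solving a new IVP with datum $u(T)$. Instead, for a candidate extension on $(T,T+h]$, the mild-solution identity
\[
w(t)=\mathbb{Z}_{1,\al}(t)\p_{0}+\int_{0}^{t}\mathbb{Z}_{2,\al}(t-\tau)G(w(\tau))d\tau
\]
must be split as
\[
w(t)=\underbrace{\mathbb{Z}_{1,\al}(t)\p_{0}+\int_{0}^{T}\mathbb{Z}_{2,\al}(t-\tau)G(\p(\tau))d\tau}_{=:\,v(t)}+\int_{T}^{t}\mathbb{Z}_{2,\al}(t-\tau)G(w(\tau))d\tau,
\]
where on the right we have substituted the already-constructed $\p$ for $w$ on $(0,T]$. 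The first two terms form a known history function $v(t)$, the last is a genuine Volterra term on the small new interval.

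First I would show that $v\in C([T,T+h];\Lep\cap C_{0}(\R^{N}))$ with $v(T)=\p(T)$. Continuity of the $\mathbb{Z}_{1,\al}(t)\p_{0}$ piece is Theorem \ref{Continuous_uni_operator_topology}; for the integral piece, I use that $\tau\mapsto G(\p(\tau))$ is bounded in $L^{\infty}+\Xi$ on $(0,T]$ by the bound already derived for $\mathscr{F}\p$ in Step~1 of Theorem \ref{LocalExistenceTheoremCase1} (estimate \eqref{Boundary of J}), together with the same limit identities \eqref{Case1Limit1}-\eqref{Case1Limit2} applied with the upper limit $T$ in place of $t$. The identity $v(T)=\p(T)$ then follows by evaluating the mild-solution formula for $\p$ at $t=T$.

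Next I would fix a number $\Im^{\sharp}>0$ and let $h<\sqrt[\al]{\al(\Im_{1}^{\sharp})^{-1}}$, where $\Im_{1}^{\sharp}$ is the analog of the constant $\Im_{1}$ from \eqref{definition of Im_1} computed for the pointwise bound $\Im_{0}^{\sharp}:=\sup_{t\in[T,T+h]}\|v(t)\|_{L^{\infty}+\Xi}+\Im^{\sharp}$. On the complete metric space
\[
\mathbf{A}^{\sharp}:=\left\{w\in C([T,T+h];\Lep\cap C_{0}(\R^{N}));\ \sup_{t\in[T,T+h]}\norm{w(t)-v(t)}_{L^{\infty}+\Xi}\le\Im^{\sharp}\right\}
\]
I would define $\mathscr{F}^{\sharp}w(t)=v(t)+\int_{T}^{t}\mathbb{Z}_{2,\al}(t-\tau)G(w(\tau))d\tau$ and recycle verbatim the two-step computation of Theorem \ref{LocalExistenceTheoremCase1}: Step~1 (continuity and self-mapping, using Lemma \ref{Kernel_apply_to_Orlicz}(ii) and the bound on $G(w)$) and Step~2 (contraction via \eqref{definition of Im_1} and the bound $\int_{T}^{t}(t-\tau)^{\al-1}d\tau\le h^{\al}/\al$). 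The Banach principle then yields a unique fixed point $w^{*}\in\mathbf{A}^{\sharp}$; I would set $\p^{*}=\p$ on $(0,T]$ and $\p^{*}=w^{*}$ on $(T,T+h]$. Continuity at $t=T$ is built in since $w^{*}(T)=v(T)=\p(T)$, and by construction $\p^{*}$ satisfies the original mild-solution formula on $(0,T+h]$.

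For uniqueness of the extension, any other continuous extension $\widetilde{\p}^{*}$ satisfies the same split identity on $(T,T+h']$ and, after shrinking $h$ if necessary, lies in the same ball $\mathbf{A}^{\sharp}$; the Banach principle forces $\widetilde{\p}^{*}=\p^{*}$ on the common interval (alternatively a short Gr\"onwall argument using the singular kernel $(t-\tau)^{\al-1}$ applies). The main obstacle is the self-mapping estimate for $\mathscr{F}^{\sharp}$: one must make sure that the history term $v(t)$ does not push $\mathscr{F}^{\sharp}w$ out of the ball even though $\|v(t)\|_{L^{\infty}+\Xi}$ need not tend to $\|\p(T)\|_{L^{\infty}+\Xi}$ as $t\to T^{+}$ uniformly a priori; this is precisely why I bound $\Im_{0}^{\sharp}$ using the supremum of $\|v(t)\|_{L^{\infty}+\Xi}$ (finite by the continuity of $v$), so that $h$ can be chosen small enough to absorb both the self-map and contraction conditions simultaneously.
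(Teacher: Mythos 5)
Your proposal is correct and follows essentially the same route as the paper: a Banach fixed-point argument on a short added interval $(T,T+h]$, applied to candidates that coincide with $\p$ on $(0,T]$, with the Duhamel integral split at $\tau=T$ into a frozen history part and a new Volterra part, and with the same kernel bounds and Lipschitz estimate \eqref{definition of Im_1} driving both the self-map and the contraction. The only (harmless) difference is that you center the invariant ball at the history function $v(t)$ rather than at the constant $\p(T)$, which lets you dispense with the paper's separate smallness checks on the terms $(I)$ and $(III)$ at the cost of first establishing the continuity of $v$ on $[T,T+h]$ — the same dominated-convergence estimates are used either way.
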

\begin{proof}
The main idea of the proof is to show that there exists a unique solution of \eqref{Main Problem} which belongs to the space
$$\mathbf{B}=\left\{ {w\in C\parentheses{(0,T+h];\Lep\cap{C_0(\R^N)}}\Biggl|\begin{array}{*{20}{c}}
{w(t)=u(t),\forall t\in(0,T]}\\
{\sup_{t \in[ T,T+h]}\norm{w(t)-\p(T)}_{L^\infty+\Xi}\le\Im}
\end{array}} \right\},$$
where $ \Im,h>0 $ will be chosen later. To this end, let us consider the function $ \mathscr{J}:\mathbf{B}\rightarrow\mathbf{B}, $  satisfying
\begin{align*}\label{J formula}
\mathscr{J}w(t)&=\mathbb{Z}_{1,\al}(t)u_{0}+\int_{0}^{t}\mathbb{Z}_{2,\al}(t-\tau)G(u(\tau))d\tau.
\end{align*}
It is easily seen that for any $ t\in(0,T] $, $ \mathscr{J}(w(t))=\mathscr{J}(\p(t))=\p(t) $. The continuity result of $ \mathscr{J} $  on $ (0,T+h] $ can be drawn by the same arguments as in Theorem \ref{LocalExistenceTheoremCase1}. Our remaining part of proving the well definition of $ \mathscr{J} $ is to find that if $ w\in\mathbf{B} $, $ \sup_{t \in[ T,T+h]}\norm{\mathscr{J}(w(t))-\p(T)}_{L^\infty+\Xi}\le\Im$. Indeed, for any $ t\in[T,T+h],$ we have 
\begin{align*}
\norm{\mathscr{J}(w(t))-\p(T)}_{L^\infty+\Xi}&\le\underbrace{\norm{\mathbb{Z}_{1,\al}(t)\p_{0}-\mathbb{Z}_{1,\al}(T)\p_{0}}_{L^\infty+\Xi}}_{(I)}+\underbrace{\int_{T}^{t}\norm{\mathbb{Z}_{2,\al}(t-\tau)G(u(\tau))}_{L^\infty+\Xi}d\tau}_{(II)}\nonumber\\
&+\underbrace{\int_{0}^{T}\Bnorm{\sqrbrackets{\mathbb{Z}_{2,\al}(t-\tau)-\mathbb{Z}_{2,\al}(T-\tau)}G(u(\tau))}_{L^\infty+\Xi}d\tau}_{(III)}.
\end{align*}

\noindent$\bullet$ Choosing a sufficiently small $ h_1 $ and using Theorem \ref{Continuous_uni_operator_topology}, Lemma \ref{Embedding_Lq_Linfty}, for every $ t\in[T,T+h_1], $ we can find that $ (I)\le\frac{\Im}{3}. $ 
	
\noindent$\bullet$ Choosing a sufficiently small $ h_2 $, for every $ t\in[T,T+h_2], $ we obtain
\begin{align*}
(II)&\le \frac{L\parentheses{2+\parentheses{\Gamma(m+1)}^{\frac{1}{p}}}\parentheses{\norm{\p(T)}_{L^\infty+\Xi}+\Im}^m}{(\log2)^{\frac{1}{p}}e^{-\kappa\parentheses{\norm{\p(T)}_{L^\infty+\Xi}+\Im}^p}} \int_{T}^{T+h_2}(t-\tau)^{\alpha-1}d\tau \\
&\le C\sqrbrackets{(t-T)^{\alpha}-(t-T-h_2)^{\alpha}}\le\frac{\Im}{3}.
\end{align*}
	
\noindent$\bullet$ Choosing a sufficiently small $ h_3 $ and repeating the arguments for \eqref{Case1Limit1},\eqref{Case1Limit2}, we can also find that
\begin{align*}
(III)\le\frac{\Im}{3},\quad\text{for every~}t\in[T,T+h_3].
\end{align*}	

Additionally, $ \mathscr{J} $ is a contraction mapping on $ \mathbf{B} $ if $ h=h_4 $ is small enough. In fact, for $ u_1,u_2\in\mathbf{B},$ we have
\begin{align*}
&\norm{\mathscr{J}u_1(t)-\mathscr{J}u_2(t)}_{L^\infty+\Xi}\le\int_{T}^{t}(t-\tau)^{\alpha-1}\norm{G(u_1(\tau))-G(u_2(\tau))}_{L^\infty+\Xi}\nonumber d\tau\\
&\nonumber\le\frac{2L \sqrbrackets{2+\parentheses{\Gamma(\aleph+1)}^{\frac{1}{\aleph p}}\parentheses{\Gamma\parentheses{\frac{\aleph(m-1)}{\aleph-1}+1}}^{\frac{\aleph-1}{\aleph p}}}}{\al h^{-\al}_4(\log2)^{\frac{1}{p}}\parentheses{\norm{\p(T)}+\Im}^{1-m}e^{-\kappa\parentheses{\norm{\p(T)}+\Im}^p}}\sup_{t\in[T,T+h]}\norm{u_1(t)-u_2(t)}_{L^\infty+\Xi}\\[0.3cm]
&\le\mathcal{L}\sup_{t \in[T,T+h]}\norm{u_1(t)-u_2(t)}_{L^\infty+\Xi}. 
\end{align*}
Thanks to an appropriate choice of $ h_4 $, $ \mathcal{L} $ can be proved to be less than $ 1 $.

By setting $ h=\min\left\{h_1,h_2,h_3,h_4\right\} $, we can now apply the Banach principle arguments to declare that the solution to Problem \eqref{Main Problem} has been extended to some larger interval. 
\end{proof}
\begin{theorem}
Let $ T_{\max} $ be the supremum of the set of all $ T>0 $ such that Problem \eqref{Main Problem} has a unique local solution on $ (0,T]. $ Assume that $ G $ satisfies \eqref{Property of J} and $ \p_0 $ belongs to $\Lp\cap{C_0(\R^N)}$. {Then, we can conclude that $ T_{\max}=\infty $ or $ T_{\max}<\infty $ and  $ \limsup\limits_{t\to T_{\max}^-}\norm{\p(t)}_{L^\infty+\Xi}=\infty.$}
\end{theorem}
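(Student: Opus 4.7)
The plan is to establish the blow-up alternative by contradiction. Assume $T_{\max}<\infty$ and suppose, aiming at a contradiction, that $M:=\limsup_{t\to T_{\max}^-}\|u(t)\|_{L^\infty+\Xi}<\infty$. I will then construct a strict extension of $u$ past $T_{\max}$, which contradicts the maximality of $T_{\max}$ as the supremum of existence times.

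First I would pick $t_0\in(0,T_{\max})$ close enough to $T_{\max}$ so that $\|u(t_0)\|_{L^\infty+\Xi}\le M+1$ (possible since the limsup is at most $M$). Viewing $u(t_0)\in\Lep\cap C_0(\R^N)$ as a fresh initial datum and applying the translated analogue of Lemma \ref{ExtensionLemma}, one obtains a unique continuous mild solution on $(t_0,t_0+h]$ extending $u$. By the uniqueness part of Theorem \ref{LocalExistenceTheoremCase1}, this extension coincides with $u$ on the overlap, hence defines a continuous mild solution on the whole interval $(0,t_0+h]$. Choosing $t_0$ sufficiently close to $T_{\max}$ so that $t_0+h>T_{\max}$ would then contradict the definition of $T_{\max}$, finishing the proof.

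The crux of the argument, and the main obstacle, is verifying that the extension length $h$ can be chosen \emph{uniformly} in the base point $t_0$, depending only on $M$ (and on $\al,N,p,L,\kappa,m$). Inspecting the proof of Lemma \ref{ExtensionLemma}, the four smallness requirements $h_1,h_2,h_3,h_4$ arise from: (i) the uniform continuity of $t\mapsto \mathbb{Z}_{1,\al}(t)u(t_0)$ near $t_0$ provided by Theorem \ref{Continuous_uni_operator_topology}; (ii) the bound $\int_{t_0}^{t_0+h_2}(t-\tau)^{\al-1}d\tau\le C h_2^{\al}$ multiplied by the factor $(\|u(t_0)\|_{L^\infty+\Xi}+\Im)^m e^{\kappa(\|u(t_0)\|_{L^\infty+\Xi}+\Im)^p}$; (iii) a dominated-convergence argument as in \eqref{Case1Limit1}--\eqref{Case1Limit2}; and (iv) the contraction constant $\mathcal{L}$, which depends only on $\|u(t_0)\|_{L^\infty+\Xi}+\Im$ through the same exponential factor, together with $h_4^{\al}/\al$. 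Under the uniform bound $\|u(t_0)\|_{L^\infty+\Xi}\le M+1$, all four quantities admit bounds in terms of $M$ alone, so that a universal $h=h(M)>0$ exists.

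Regarding (i) and (iii), some care is required because those estimates in Lemma \ref{ExtensionLemma} implicitly rely on continuity on the closed interval. I would replace the reference point $T$ there by $t_0$ and note that the continuity modulus is uniform on compact subintervals of $(0,T_{\max}]$ thanks to Theorem \ref{Continuous_uni_operator_topology}; combined with the fact that $\|u(\tau)\|_{L^\infty+\Xi}$ stays bounded on an interval $[\delta,T_{\max})$ by an application of Theorem \ref{LocalExistenceTheoremCase1} together with our standing assumption, the integrand in $(III)$ is uniformly dominated, so the limits \eqref{Case1Limit1}--\eqref{Case1Limit2} hold uniformly in $t_0$. This yields the universal $h(M)$, produces the contradiction, and therefore gives the stated dichotomy: either $T_{\max}=\infty$, or $T_{\max}<\infty$ and $\limsup_{t\to T_{\max}^-}\|u(t)\|_{L^\infty+\Xi}=\infty$.
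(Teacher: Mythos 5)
Your overall strategy (contradiction plus extension past $T_{\max}$) matches the paper's, but the mechanism is genuinely different. The paper does not attempt to extend from interior points $t_0<T_{\max}$ with a uniform step size; instead it first proves that $\{\p(t_j)\}_{j\in\N}$ is a Cauchy sequence in $\Lep\cap C_0(\R^N)$ for any sequence $t_j\to T_{\max}^-$, identifies the limit with $\mathbb{Z}_{1,\al}(T_{\max})\p_0+\int_0^{T_{\max}}\mathbb{Z}_{2,\al}(T_{\max}-\tau)G(\p(\tau))d\tau$, and thereby extends $\p$ continuously up to the closed endpoint $T_{\max}$. Only then is Lemma \ref{ExtensionLemma} invoked, once, at $T=T_{\max}$, where no uniformity in the base point is needed. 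Your route replaces this with a ``uniform local existence time'' argument. Both can in principle work, but the paper's version is better adapted to the time-fractional setting, where the memory integral rules out any genuine restart: Lemma \ref{ExtensionLemma} never treats $\p(T)$ as a fresh initial datum --- the operator $\mathscr{J}$ retains $\mathbb{Z}_{1,\al}(t)\p_0$ and the full history integral from $0$. Your later bookkeeping of the conditions (i)--(iv) shows you are aware of this, but the opening phrase ``viewing $u(t_0)$ as a fresh initial datum'' is not literally available here and should read ``applying the extension lemma based at $t_0$.''

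The substantive gap is in your uniformity claim for the memory term. You assert that because the integrand in the term $(III)$ of Lemma \ref{ExtensionLemma} is uniformly dominated by $C(t-\tau)^{\al-1}$, the limits \eqref{Case1Limit1}--\eqref{Case1Limit2} ``hold uniformly in $t_0$.'' Uniform domination yields, via dominated convergence, convergence for each fixed base point, but says nothing about the rate; the admissible $h_3=h_3(t_0)$ could a priori shrink to $0$ as $t_0\to T_{\max}^-$, which would destroy the argument. Closing this requires something extra, e.g.\ splitting $\int_0^{t_0}=\int_0^{t_0-\delta}+\int_{t_0-\delta}^{t_0}$, bounding the second piece by $C\delta^{\al}$ uniformly in $t_0$ and $t$, and then using uniform continuity of $s\mapsto s^{1-\al}\mathbb{Z}_{2,\al}(s)$ on $[\delta,T_{\max}+1]$ (Theorem \ref{Continuous_uni_operator_topology} on a compact interval) together with relative compactness of $\{G(\p(\tau)):\tau\in[\epsilon,t_0-\delta]\}$ --- none of which appears in your sketch. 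The paper's Cauchy-sequence argument avoids the issue entirely because its Step 3 only needs dominated convergence along a single sequence $t_j\to T_{\max}$, not a modulus uniform over base points. As written, your proof has a hole at precisely this point; either supply the uniformity argument or switch to the paper's endpoint-extension scheme.
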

\begin{proof}
Arguing by contradiction, we assume that $ T_{\max}<\infty $ and there exist a positive constant $ \Im<\infty $ such that 
\begin{align}
\norm{\p(t)}_{L^\infty+\Xi}\le \Im.
\end{align}
Let $ \{t_j\}_{j\in\N}\subset(0,T_{\max})$ satisfy $t_j\xrightarrow{~j\rightarrow\infty~}T_{\max}$. Our goal is to show that $ \{\p(t_j)\}_{j\in\N} $ is a Cauchy sequence. To this end, let us take $ t_a,t_b\in\{t_j\}_{j\in\N}, $   without loss of generality, we suppose that $ t_a<t_b $. Then, we have
\begin{align*}
\norm{\p(t_b)-\p(t_a)}_{L^\infty+\Xi}\le&\norm{\mathbb{Z}_{1,\al}(t_b)\p_{0}-\mathbb{Z}_{1,\al}(t_a)\p_{0}}_{L^\infty+\Xi}\nonumber\\
&+\int_{t_a}^{t_b}\norm{\mathbb{Z}_{2,\al}(T_{\max}-\tau)G(\p(\tau))}_{L^\infty+\Xi}d\tau\\
&+\int_{0}^{t_a}\Bnorm{\sqrbrackets{\mathbb{Z}_{2,\al}(T_{\max}-\tau)-\mathbb{Z}_{2,\al}(t_a-\tau)}G(\p(\tau))}_{L^\infty+\Xi}d\tau\nonumber\\
&\nonumber+\int_{0}^{t_b}\Bnorm{\sqrbrackets{\mathbb{Z}_{2,\al}(t_b-\tau)-\mathbb{Z}_{2,\al}(T_{\max}-\tau)}G(\p(\tau))}_{L^\infty+\Xi}d\tau\nonumber.
\end{align*}
Given $ \varepsilon>0 $, let us consider some large enough constants $j_1,j_2,j_3$ defined through the following steps.
	
\noindent\textbf{\emph{Step 1.}} Thanks to the property of $\mathbb{Z}_{1,\al}(t) $, we can choose a sufficiently large $ j_1 $ such that
\begin{align*}
\norm{\mathbb{Z}_{1,\al}(t_b)\p_{0}-\mathbb{Z}_{1,\al}(t_a)\p_{0}}_{L^\infty+\Xi}<\frac{\varepsilon}{4},\quad\text{for any~}a,b\ge j_1.
\end{align*}
	
\noindent\textbf{\emph{Step 2.}} For the sake of simplicity, let us set
\begin{align*}
\Im^*=L(\log2)^{\frac{-1}{p}}\parentheses{2+\parentheses{\Gamma(m+1)}^{\frac{1}{mp}}}\Im^me^{-\kappa \Im^p}.
\end{align*}
and choose a large $ j_2 $ such that
\begin{align*}
|t_j-T_{\max}|<\frac{\varepsilon\alpha}{8\Im^*},\quad\text{for any~}j\ge j_2.
\end{align*}
Then, similar to the proof of Theorem \ref{LocalExistenceTheoremCase1}, it is easily seen that
\begin{align*}
\int_{t_a}^{t_b}\norm{\mathbb{Z}_{2,\al}(T_{\max}-\tau)G(\p(\tau))}_{L^\infty+\Xi}d\tau&\le\int_{t_a}^{t_b}(T_{\max}-\tau)^{\alpha-1}\norm{G(u(\tau))}_{L^\infty+\Xi}d\tau\nonumber\\
&\le\Im^*\al^{-1}\parentheses{|t_b-T_{\max} |^{\alpha}+|t_a-T_{\max} |^{\alpha}}<\frac{\varepsilon}{4}.
\end{align*}
\textbf{\emph{Step 3.}} The Lebesgue dominated convergence theorem helps us to find a sufficiently large $ j_3 $ such that, for any $ j\ge j_3 $,
\begin{align*}
\int_{0}^{t_j}\norm{\sqrbrackets{\mathbb{Z}_{2,\al}(T_{\max}-\tau)-\mathbb{Z}_{2,\al}(t_j-\tau)}G(\p(\tau))}_{L^\infty+\Xi}d\tau<\frac{\varepsilon}{4}.
\end{align*}
Now choosing $ j_0=\max\{j_1,j_2,j_3\} $ yields that 
\begin{align*}
\norm{\p(t_a)-\p(t_b)}_{L^\infty+\Xi}\le\varepsilon,\quad\text{for every~}a,b\ge j_0.
\end{align*}
Then, there exists a limit of the Cauchy sequence $ \{\p(t_j)\}_{j\in\N} $ in $ \Lep\cap {C_0(\R^N)} $ as $ j $ tends to infinity denoted by $ \p^* $. Hence, we can check that
\begin{align*}
&\norm{\int_{0}^{t_j}\mathbb{Z}_{2,\al}(t_j-\tau)G(\p(\tau))d\tau-\int_{0}^{T_{\max}}\mathbb{Z}_{2,\al}(T_{\max}-\tau)G(\p(\tau))d\tau}_{L^\infty+\Xi}\\
&\le\int_{0}^{t_j}\norm{\sqrbrackets{\mathbb{Z}_{2,\al}(t_j-\tau)-\mathbb{Z}_{2,\al}(T_{\max}-\xi)}G(\p(\tau))}_{L^\infty+\Xi}d\tau+\frac{\Im^*(T_{\max}-t_j)^{\alpha}}{\alpha}\xrightarrow{~j\rightarrow\infty~}0\nonumber.
\end{align*} 
It implies that
\begin{align*}
\p^*:&=\lim\limits_{j\to\infty}\sqrbrackets{\mathbb{Z}_{1,\al}(t_j)\p_{0}+\int_{0}^{t_j}\mathbb{Z}_{2,\al}(t_j-\tau)G(u(\tau))d\tau}\\
&=\mathbb{Z}_{1,\al}(T_{\max})\p_{0}+\int_{0}^{T_{\max}}\mathbb{Z}_{2,\al}(T_{\max}-\tau)G(u(\tau))d\tau.
\end{align*}
This result hepls us to enlarge the solution $ \p $ of the Problem \eqref{Main Problem} over the interval $ [0,T_{\max}] $. Therefore, Lemma \ref{ExtensionLemma} is available to extend $ \p $ to an interval that is larger than $ [0,T_{\max}] $. This fact contradicts our definition of $ T_{\max}. $
\end{proof}

\subsection{Well-posedness results under the second assumption on the initial function}
First of all, let us introduce the important nonlinear estimate for Problem \eqref{Main Problem}. To achieve this aim, we need the following lemma about the Gamma function, that can be found in \cite[Lemma 3.3]{Kawakami Et al.}.  
\begin{lemma}\label{Key-Gamma-estimate}For any $ p,q\ge1 $, there exists a positive constant $ \bar{M} $ such that 
	\begin{align*}
		\sqrbrackets{\Gamma(pq+1)}^{\frac{1}{q}}\le \bar{M}\Gamma(p+1)q^p.
	\end{align*}
\end{lemma}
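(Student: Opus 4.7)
My plan is to reduce the inequality to bounds on the ratio of two explicit expressions coming from Stirling's formula, and then use elementary calculus to show that ratio is uniformly bounded over $p,q\ge 1$. The proof will proceed in three steps.

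First, I would recall the two-sided Stirling bound: there exist absolute constants $c_1,c_2>0$ such that for every $z\ge 1$,
\begin{equation*}
c_1\, z^{z+\frac12}e^{-z}\;\le\;\Gamma(z+1)\;\le\;c_2\, z^{z+\frac12}e^{-z}.
\end{equation*}
Applying the upper bound with $z=pq$ (note $pq\ge 1$) gives
\begin{equation*}
\bigl[\Gamma(pq+1)\bigr]^{1/q}\;\le\;c_2^{1/q}\,(pq)^{p+\frac{1}{2q}}\,e^{-p},
\end{equation*}
while the lower bound with $z=p$ yields
\begin{equation*}
\Gamma(p+1)\,q^p\;\ge\;c_1\, p^{p+\frac12}e^{-p}\,q^p.
\end{equation*}

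Second, dividing these two estimates, the $(pq)^p$ factor in the numerator matches exactly with $p^p q^p$ in the denominator, and the $e^{-p}$ factors cancel; one is left with
\begin{equation*}
\frac{\bigl[\Gamma(pq+1)\bigr]^{1/q}}{\Gamma(p+1)\,q^p}\;\le\;\frac{c_2^{1/q}}{c_1}\,\frac{(pq)^{\frac{1}{2q}}}{p^{\frac12}}\;=\;\frac{c_2^{1/q}}{c_1}\,p^{\frac{1}{2q}-\frac12}\,q^{\frac{1}{2q}}.
\end{equation*}

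Third, I would show the right-hand side is bounded by an absolute constant. Since $q\ge 1$, the exponent $\frac{1}{2q}-\frac12$ is non-positive, so $p^{\frac{1}{2q}-\frac12}\le 1$ for $p\ge 1$. The function $q\mapsto q^{1/(2q)}=\exp\bigl(\tfrac{\log q}{2q}\bigr)$ attains its maximum on $[1,\infty)$ at $q=e$, where it equals $e^{1/(2e)}$. Finally $c_2^{1/q}\le\max(c_2,1)$. Combining these gives the claim with $\bar M:=c_1^{-1}\max(c_2,1)e^{1/(2e)}$.

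The one step that requires a bit of care rather than being fully routine is verifying that the Stirling bounds are valid uniformly down to $z=1$; this is where one must either invoke a known version of Stirling's inequality that holds on $[1,\infty)$ with explicit constants, or compactness together with continuity of $\Gamma$ to absorb the small-$z$ behavior into $c_1,c_2$. Once this is settled, the remaining estimates are direct algebra. An alternative route, avoiding Stirling, would be to use log-convexity of $\log\Gamma$ (Bohr--Mollerup) to compare $\Gamma(pq+1)$ with a suitable product, but the Stirling approach above gives the sharpest dependence and is the cleanest way to extract the factor $q^p$.
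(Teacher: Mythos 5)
Your proof is correct. For the record, the paper does not prove this lemma at all: it is quoted verbatim from Furioli--Kawakami--Ruf--Terraneo \cite[Lemma 3.3]{Kawakami Et al.}, so there is no in-paper argument to compare against; your write-up supplies the missing proof, and it follows the same standard Stirling route used in that reference. The algebra checks out: after raising the upper Stirling bound at $z=pq$ to the power $1/q$ and dividing by the lower bound at $z=p$ times $q^p$, the factors $(pq)^p$ and $e^{-p}$ cancel exactly, leaving $c_2^{1/q}c_1^{-1}\,p^{\frac{1}{2q}-\frac12}\,q^{\frac{1}{2q}}$, and each of the three factors is bounded on $p,q\ge1$ exactly as you say. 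The one point you flag --- validity of the two-sided Stirling bound down to $z=1$ --- is not an issue: Binet's formula gives $\Gamma(z+1)=\sqrt{2\pi z}\,(z/e)^{z}e^{\mu(z)}$ with $0<\mu(z)<\frac{1}{12z}$ for all $z>0$, so one may take $c_1=\sqrt{2\pi}$ and $c_2=\sqrt{2\pi}\,e^{1/12}$, yielding the explicit constant $\bar M=e^{1/12}e^{1/(2e)}$. No gaps.
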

\begin{lemma}\label{NonlinearEstimate}
Let $  u,v\in\Lep$ and $\mathcal{V}=\max\{\norm{u}_{\Xi},\norm{v}_{\Xi}\}. $ Then, for any $ h\ge p $, the following estimate holds
\begin{align*}
\norm{G(u)-G(v)}_{L^h}\le \mathscr{C}_h\mathcal{V}^{m-1}\sum_{j=0}^{\infty}(3h\kappa\mathcal{V}^p)^j\norm{u-v}_{\Xi}.
\end{align*}
\end{lemma}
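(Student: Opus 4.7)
The starting point is the pointwise inequality \eqref{Property of J}, which after taking the $L^h$ norm yields
\[
\|G(u)-G(v)\|_{L^h} \le L\sum_{w\in\{u,v\}}\bnorm{|u-v|\,|w|^{m-1}e^{\kappa|w|^p}}_{L^h}.
\]
My plan is to expand each exponential as a Taylor series, reducing the proof to a uniform control, for each $j\in\N$ and each $w\in\{u,v\}$, of the quantity $\bnorm{|u-v|\,|w|^{m-1+pj}}_{L^h}$, then to sum against the Taylor coefficients $\kappa^j/j!$.

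For each fixed $j$, I would apply H\"older's inequality with the exponent splitting $\frac{1}{h}=\frac{1}{h(m+pj)}+\frac{m-1+pj}{h(m+pj)}$, giving
\[
\bnorm{|u-v|\,|w|^{m-1+pj}}_{L^h} \le \|u-v\|_{L^{h(m+pj)}}\,\|w\|^{m-1+pj}_{L^{h(m+pj)}}.
\]
Since $m\ge 1$ and $h\ge p$, one has $h(m+pj)\ge p$, so Lemma \ref{Embedding exp to Lq} upgrades both $L^{h(m+pj)}$-norms to Orlicz norms at a combined cost of $[\Gamma(h(m+pj)/p+1)]^{1/h}$ (the exponents $1/(h(m+pj))$ and $(m-1+pj)/(h(m+pj))$ sum to $1/h$). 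Applying Lemma \ref{Key-Gamma-estimate} with the pair $((m+pj)/p,\,h)$ rewrites this overhead as $\bar M\,\Gamma(m/p+j+1)\,h^{m/p+j}$.

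Collecting the pieces and bounding $\|u\|_\Xi,\|v\|_\Xi\le\mathcal V$, the $j$-th summand is dominated by
\[
\bar M L\, h^{m/p}\,\frac{\Gamma(m/p+j+1)}{j!}\,(h\kappa\mathcal V^p)^j\,\mathcal V^{m-1}\,\|u-v\|_\Xi.
\]
The decisive step is then to dominate the Gamma/factorial ratio by a mild exponential: the identity
\[
\frac{\Gamma(m/p+j+1)}{j!}=\Gamma(m/p+1)\prod_{k=1}^{j}\Big(1+\tfrac{m}{pk}\Big)\le \Gamma(m/p+1)\,(ej)^{m/p}
\]
shows polynomial growth in $j$, so $\Gamma(m/p+j+1)/j!\le C_{m,p}\,3^j$ for some constant $C_{m,p}$ depending only on $m$ and $p$. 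Absorbing $2\bar M L\, h^{m/p}\Gamma(m/p+1)\,C_{m,p}$ into the constant $\mathscr C_h$ (the factor $2$ accounting for the two contributions $w=u$ and $w=v$) and summing over $j$ produces the stated inequality.

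The main obstacle is precisely this last step: the H\"older split must be tailored so that, once Lemma \ref{Embedding exp to Lq} and Lemma \ref{Key-Gamma-estimate} have been invoked, the residual Gamma/factorial ratio grows at most polynomially in $j$, whence it is absorbable into a geometric factor $3^j$ without compromising convergence of the Taylor series expansion of the exponential nonlinearity.
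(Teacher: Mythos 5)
Your proof is correct, but it follows a genuinely different route from the paper's. The paper fixes a $j$-independent three-way H\"older split $\frac{1}{h}=\frac{1}{3h}+\frac{1}{3h}+\frac{1}{3h}$, placing $|u-v|$, $|w|^{pj}$ and $|w|^{m-1}$ each in an $L^{3h}$-type norm; after invoking Lemma \ref{Embedding exp to Lq}, the only $j$-dependent overhead is $\bigl[\Gamma(3hj+1)\bigr]^{\frac{1}{3h}}$, and a single application of Lemma \ref{Key-Gamma-estimate} with the pair $(j,3h)$ turns this into $\bar M (3h)^j j!$, whose factorial exactly cancels the Taylor coefficient $1/j!$ and yields the geometric series $\sum_j(3h\kappa\mathcal V^p)^j$ with the factor $3$ arising structurally from the three-way split. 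You instead use a $j$-dependent two-factor split $\frac{1}{h}=\frac{1}{h(m+pj)}+\frac{m-1+pj}{h(m+pj)}$, which elegantly collapses the embedding cost into the single quantity $\bigl[\Gamma(h(m+pj)/p+1)\bigr]^{1/h}$; the price is that after Lemma \ref{Key-Gamma-estimate} you are left with the residual ratio $\Gamma(m/p+j+1)/j!$, which you must separately show grows only polynomially in $j$ (your product identity and the bound $(ej)^{m/p}$ do this correctly) before padding with an artificial factor $3^j$ to match the stated form. Both arguments are sound and give the same conclusion; the paper's choice makes the factor $3$ intrinsic and avoids the extra Gamma-ratio estimate, while yours is arguably more systematic in how it treats the exponents and makes transparent that the series actually converges under the weaker condition $h\kappa\mathcal V^p<1$. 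The only cosmetic caveats — the degenerate exponent when $m=1$, $j=0$, and the application of Lemma \ref{Key-Gamma-estimate} with first argument below $1$ at $j=0$ — are shared by the paper's own proof and are harmless.
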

\begin{proof}
From \eqref{Property of J}, by using Taylor expansion, for any $u,v\in\Lep, $ one has 
\begin{align}\label{J-J L^h}
\norm{G(u)-G(v)}_{L^h}\le L\sum_{j=0}^{\infty}\frac{\kappa^j}{j!}\norm{u-v}_{L^{3h}}\parentheses{\norm{u}_{L^{3hpj}}^{pj}\norm{u}_{L^{3h(m-1)}}^{m-1}+\norm{v}_{L^{3hpj}}^{pj}\norm{v}_{L^{3h(m-1)}}^{m-1}},
\end{align}	
where we have used the H\"older inequality with $ \frac{1}{h}=\frac{1}{3h}+\frac{1}{3h}+\frac{1}{3h} $. By the embedding stated in Lemma \ref{Embedding exp to Lq}, the estimate \eqref{J-J L^h} becomes
\begin{align}\label{pre-nonlinear-estimate}
\norm{G(u)-G(v)}_{L^h}
\le\frac{2L\mathcal{V}^{m-1}(\Gamma(3hp^{-1}+1))^{\frac{1}{3h}}}{\parentheses{\Gamma(3h(m-1)p^{-1}+1)}^{\frac{-1}{3h}}} \norm{u-v}_{\Xi}\sum_{j=0}^{\infty}\parentheses{\frac{\kappa^j}{j!}\mathcal{V}^{pj}\sqrbrackets{\Gamma(3hj+1)}^{\frac{1}{3h}}}.
\end{align}
It follows from Lemma \ref{Key-Gamma-estimate} that 
\begin{align*}
\sqrbrackets{\Gamma(3hj+1)}^{\frac{1}{3h}}&\le \bar{M}\Gamma\parentheses{j+1}(3h)^{j}=\bar{M}(3h)^jj!.
\end{align*}
Based on the above results, the following approximation is satisfied 
\begin{align}\label{GlobalCase_MainLineareEstimate}
\norm{G(u)-G(v)}_{L^h}\le\frac{2L\mathcal{V}^{m-1}(\Gamma(3hp^{-1}+1))^{\frac{1}{3h}}}{\parentheses{\Gamma(3h(m-1)p^{-1}+1)}^{\frac{-1}{3h}}} \norm{u-v}_{\Xi}\sum_{j=0}^{\infty}(3h\kappa\mathcal{V}^p)^j.
\end{align}
\end{proof}
\subsubsection{Local-in-time solution}
{\begin{theorem}\label{LocalExistenceSmall data}
		Let $ u_0 $ be in $ \Lep $ with sufficiently small data and $ h>\max\left\{p,\frac{N}{4}\right\} $. Then there exists a locally unique mild solution to Problem \eqref{Pro1}, belonging to an open ball centered at the origin with radius $ \varepsilon<(3h\kappa)^{\frac{-1}{p}}. $ 
\end{theorem}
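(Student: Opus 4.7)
The plan is to apply the Banach fixed point theorem to the map
\[
\mathscr{F}u(t) := \mathbb{Z}_{1,\alpha}(t)u_0 + \int_0^t \mathbb{Z}_{2,\alpha}(t-\tau)G(u(\tau))\,d\tau
\]
on the closed subset
\[
\mathscr{B}_{\varepsilon,T} := \Big\{u \in C\parentheses{(0,T];\Lep} ; \sup_{t\in(0,T]}\|u(t)\|_{\Xi} \le \varepsilon\Big\}
\]
of $C\parentheses{(0,T];\Lep}$, endowed with the metric $d(u,v) = \sup_{t\in(0,T]}\|u(t)-v(t)\|_{\Xi}$, for some radius $\varepsilon < (3h\kappa)^{-1/p}$ and a sufficiently short time $T>0$ to be fixed later. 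The strict upper bound on $\varepsilon$ is precisely what is needed for the geometric series in Lemma \ref{NonlinearEstimate} to converge: for $u,v\in\mathscr{B}_{\varepsilon,T}$ and using $G(0)=0$, that lemma produces
\[
\|G(u(\tau))\|_{L^h} \le \frac{\mathscr{C}_h\varepsilon^m}{1-3h\kappa\varepsilon^p},\qquad \|G(u(\tau))-G(v(\tau))\|_{L^h} \le \frac{\mathscr{C}_h\varepsilon^{m-1}}{1-3h\kappa\varepsilon^p}\|u(\tau)-v(\tau)\|_{\Xi},
\]
uniformly in $\tau\in(0,T]$.

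To promote these $L^h$ bounds into $\Lep$ control on the Duhamel integral, I combine them with Lemma \ref{Kernel_apply_to_Orlicz}(i), which under the hypothesis on $h$ furnishes
\[
\|\mathbb{Z}_{2,\alpha}(t-\tau)G(u(\tau))\|_{\Xi} \le \mathscr{C}_{2,h}(t-\tau)^{-\alpha N/(4h)}\sqrbrackets{\log\parentheses{1+(t-\tau)^{-\alpha N/4}}}^{-1/p}\|G(u(\tau))\|_{L^h}.
\]
Since $h>N/4$ implies $\alpha N/(4h)<1$, the singular time integral $\int_0^t (t-\tau)^{-\alpha N/(4h)}\,d\tau \lesssim t^{1-\alpha N/(4h)}$ is finite and tends to $0$ as $t\to 0^+$, and the logarithmic factor is bounded (in fact it vanishes as $\tau\to t^-$). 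Coupling this with $\|\mathbb{Z}_{1,\alpha}(t)u_0\|_{\Xi}\le\|u_0\|_{\Xi}$ from Lemma \ref{Kernel_apply_to_Orlicz}(ii) and requiring $\|u_0\|_{\Xi}\le\varepsilon/2$ (this is the precise meaning of \emph{sufficiently small}), I can select $T$ small enough to secure both the invariance $\mathscr{F}(\mathscr{B}_{\varepsilon,T})\subset\mathscr{B}_{\varepsilon,T}$ and the contraction $d(\mathscr{F}u,\mathscr{F}v)\le C(T)\,d(u,v)$ with $C(T)<1$.

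The continuity of $\mathscr{F}u$ from $(0,T]$ into $\Lep$ is verified by a Lebesgue dominated-convergence argument mirroring Step~1 of Theorem \ref{LocalExistenceTheoremCase1}, invoking Theorem \ref{Continuous_uni_operator_topology} for the strong continuity of $t\mapsto\mathbb{Z}_{1,\alpha}(t)$ and $t\mapsto t^{1-\alpha}\mathbb{Z}_{2,\alpha}(t)$ and using the time-uniform $L^h$-bound on $G(u)$ just derived as the dominating function. With invariance, contraction, and continuity all in place, the Banach fixed point theorem yields the unique mild solution in $\mathscr{B}_{\varepsilon,T}$, which lies in the open ball of radius $\varepsilon$ as required.

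The principal obstacle I anticipate is the careful coordination of the three parameters $\varepsilon$, $\|u_0\|_{\Xi}$, and $T$: $\varepsilon$ must remain strictly below $(3h\kappa)^{-1/p}$ so that the geometric series from Lemma \ref{NonlinearEstimate} can be summed, the smallness of $\|u_0\|_{\Xi}$ must be compatible with the invariance bound, and $T$ must be small enough for the prefactor $\int_0^t(t-\tau)^{-\alpha N/(4h)}\,d\tau$ to overcome the (fixed but possibly large) constant $\mathscr{C}_h\varepsilon^{m-1}/(1-3h\kappa\varepsilon^p)$ appearing in both the self-map and contraction estimates. The logarithmic weight from Lemma \ref{Kernel_apply_to_Orlicz}(i) is a harmless technicality but must be tracked through the computations so that no hidden blow-up is introduced at $\tau=t$.
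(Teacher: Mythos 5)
Your overall architecture (Banach fixed point on a small ball, Lemma \ref{NonlinearEstimate} with the geometric series summed thanks to $\varepsilon<(3h\kappa)^{-1/p}$, smallness of $\norm{u_0}_{\Xi}$ for invariance, smallness of $T$ for contraction) matches the paper. However, the bridging step from the $L^h$ bound on $G(u)$ to $\Lep$ control of the Duhamel term has a genuine gap: you invoke Lemma \ref{Kernel_apply_to_Orlicz}(i), but that lemma is stated, and proved, only for $h\in[1,p]$, whereas the theorem's hypothesis forces $h>\max\{p,N/4\}\ge p$. The restriction is not cosmetic. The proof of that lemma rests on Young's convolution inequality $\norm{\mathbb{Z}_{2,\al}(t)\varphi}_{L^{pj}}\le\norm{\mathbb{K}_{2,\al}(t)}_{L^{q_j}}\norm{\varphi}_{L^h}$, which requires $pj\ge h$; on $\R^N$ convolution cannot lower the integrability exponent, so from $\norm{G(u(\tau))}_{L^h}$ alone with $h>p$ you cannot control the $j=1$ term $\norm{\mathbb{Z}_{2,\al}(t-\tau)G(u(\tau))}_{L^p}$ in the Taylor expansion defining the Luxemburg norm, hence not $\norm{\mathbb{Z}_{2,\al}(t-\tau)G(u(\tau))}_{\Xi}$ either. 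The estimate you quote simply is not available for your choice of $h$.

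The paper closes this gap with a two-norm decomposition that you would need to adopt: it bounds the Duhamel increment separately in $L^\infty$, pairing $\norm{\mathbb K_{2,\al}(t-\tau)}_{L^{h/(h-1)}}$ with $\norm{G(u(\tau))}_{L^h}$ (this is where $h>N/4$ enters, to keep the kernel exponent $h/(h-1)<N/(N-4)$ admissible and the time singularity $(t-\tau)^{\al(1-N/(4h))-1}$ integrable), and in $L^p$, pairing $\norm{\mathbb K_{2,\al}(t-\tau)}_{L^1}$ with $\norm{G(u(\tau))}_{L^p}$ obtained from Lemma \ref{NonlinearEstimate} applied a second time with exponent $p$ (constant $\mathscr{C}_p$); it then assembles the Orlicz norm via Lemma \ref{Embedding_Lq_Linfty}, $\norm{\cdot}_{\Xi}\le(\log 2)^{-1/p}\sqrbrackets{\norm{\cdot}_{L^p}+\norm{\cdot}_{L^\infty}}$. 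A secondary point: your claim that $\mathscr{F}u\in C\parentheses{(0,T];\Lep}$ is also not supported by the paper's tools for a general $u_0\in\Lep$, since continuity of $t\mapsto\mathbb{Z}_{1,\al}(t)u_0$ in the $\Lep$ norm is only established (Proposition \ref{ContinuityProposition}) for $u_0\in\0Lep$; the paper avoids the issue by running the fixed point in $L^\infty\parentheses{0,T;\Lep}$ rather than in a space of continuous functions.
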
}
\begin{proof}
Our proof starts with the observation that
{\begin{align}
		\norm{\mathbb{Z}_{1,\alpha}(t)u_0}_{\Xi}\le\norm{u_0}_{\Xi},\quad t>0,
\end{align}}
where we have used Lemma \ref{Kernel_apply_to_Orlicz}. 
In addition, for any {$ u,v\in L^\infty\parentheses{0,T;\Lep},~T>0 $}, Lemma \ref{NonlinearEstimate} implies for a fixed constant $ h>\max\left\{p,\frac{N}{4}\right\} $ and $ t\in\left(0,T\right) $ that
{\begin{align*}
	&\int_{0}^{t}(t-\tau)^{\al\parentheses{1-\frac{N}{4h}}-1}\norm{G(u(\tau))-G(v(\tau))}_{L^h}d\tau\nonumber\\
	&\le \mathscr{C}_h\sum_{j=0}^{\infty}(3h\kappa)^j\int_{0}^{t}(t-\tau)^{\al\parentheses{1-\frac{N}{4h}}-1}\norm{u(\tau)-v(\tau)}_{\Xi}[\mathcal{V}(\tau)]^{m-1+pj}d\tau\\
	&\le \frac{\mathscr{C}_hT^{\al\parentheses{1-\frac{N}{4h}}}}{\al\parentheses{1-\frac{N}{4h}}}\norm{u-v}_{L^\infty_{T}\Xi}\norm{\mathcal{V}}^{m-1}_{L^\infty_{T}\Xi}\sum_{j=0}^{\infty}(3h\kappa\norm{\mathcal{V}}^{p}_{L^\infty_{T}\Xi})^j\nonumber,
\end{align*}}
here we set $ \mathcal{V}(t)=\max\left\{\norm{u(t)}_{\Xi},\norm{v(t)}_{\Xi}\right\} $. Next, for purpose of using the fixed point principle, let us define the following mapping
\begin{align}\label{G formula}
\mathscr{G}u(t)&=\mathbb{Z}_{1,\al}(t)\p_{0}+\int_{0}^{t}\mathbb{Z}_{2,\al}(t-\tau)G(u(\tau))d\tau,
\end{align}
which maps a closed ball {$ B_{L^\infty\parentheses{0,T;\Lep}}(0,\varepsilon) $} to itself, provided that $ \varepsilon<(3h\kappa)^{\frac{-1}{p}} $ and $ T $ satisfies
\begin{align*}
\frac{\max\left\{\overline{\mathscr{C}_{0,\frac{h}{h-1}}}(\alpha,N)\mathscr{C}_h,\overline{\mathscr{C}_{0,1}}(\alpha,N)\mathscr{C}_p\right\}}{\alpha(\log2)^{\frac{1}{p}}(1-3h\kappa\varepsilon^p)}\sqrbrackets{\parentheses{1-\frac{N}{4h}}^{-1}T^{\al\parentheses{1-\frac{N}{4h}}}+T^{\al}}\varepsilon^{m-1}\le\frac{1}{2}.
\end{align*} 
We first prove the invariance of {$ B_{L^\infty\parentheses{0,T;\Lep}}(0,\varepsilon) $} under the action of $ \mathscr{G} $. Indeed, the Young convolution inequality and Lemma \ref{lemqt} imply
{\begin{align}\label{Lh Estimate for int-term}
		&\norm{\int_{\R^N}\mathbb K_{2,\al} (t-\tau,x-y)\sqrbrackets{G(u(\tau,y))-G(v(\tau,y))}dy}_{L^{\infty}}\\[0.15cm]
		&\nonumber\le\norm{\mathbb K_{2,\al} (t-\tau,x)}_{L^{\frac{h}{h-1}}}\norm{G(u(\tau))-G(v(\tau))}_{L^h}\\
		&\le\overline{\mathscr{C}_{0,\frac{h}{h-1}}}(\alpha,N)(t-\tau)^{\al\parentheses{1-\frac{N}{4h}}-1}\norm{G(u(\tau))-G(v(\tau))}_{L^h}.
\end{align}}
This result leads us to
{\begin{align*}
		\norm{\mathscr{G}u-\mathscr{G}v}_{L^\infty_{T}\Linftyx}\le \overline{\mathscr{C}_{0,\frac{h}{h-1}}}(\alpha,N)\int_{0}^{t}(t-\tau)^{\al\parentheses{1-\frac{N}{4h}}-1}\norm{G(u(\tau))-G(v(\tau))}_{L^h}d\tau.
\end{align*}}	
In \eqref{Lh Estimate for int-term} and the above inequality, if we take $ v\equiv0 $, then from the property of geometric series, the following holds
{\begin{align*}
		\norm{\mathscr{G}u-\mathbb Z_{1,\al}(\cdot)u_{0}}_{L^\infty_{T}\Linftyx}&\le\overline{\mathscr{C}_{0,\frac{h}{h-1}}}(\alpha,N)\mathscr{C}_h\left[\al\parentheses{1-\frac{N}{4h}}\right]^{-1}T^{\al\parentheses{1-\frac{N}{4h}}}\varepsilon^{m}\sqrbrackets{\sum_{j=0}^{\infty}(3h\kappa\varepsilon^p)^j}\nonumber\\
		&=\overline{\mathscr{C}_{0,\frac{h}{h-1}}}(\alpha,N)\mathscr{C}_h\left[\al\parentheses{1-\frac{N}{4h}}\right]^{-1}T^{\al\parentheses{1-\frac{N}{4h}}}\varepsilon^{m}(1-3h\kappa\varepsilon^p)^{-1}.
\end{align*}} 	
Similarly, we also obtain the following estimate for the $ \Lp $ norm
{\begin{align*}
		\norm{\mathscr{G}u-\mathbb Z_{1,\al}u_{0}}_{L^\infty_{T}\Lpx}&\le \overline{\mathscr{C}_{0,1}}(\alpha,N)\mathscr{C}_p\al^{-1}T^{\al}\varepsilon^{m}\sqrbrackets{\sum_{j=0}^{\infty}(3h\kappa\varepsilon^p)^j}=\overline{\mathscr{C}_{0,1}}(\alpha,N)\mathscr{C}_p\al^{-1}T^{\al}\varepsilon^{m}(1-3p\kappa\varepsilon^p)^{-1}.
\end{align*}}	
It follows from the above results that 
{\begin{align*}
		\norm{\mathscr{G}u}_{L^\infty_{T}\Xi}&\le \norm{\mathbb Z_{1,\al}u_{0}}_{L^\infty_{T}\Xi}+(\log2)^{\frac{-1}{p}}\parentheses{\norm{\mathscr{G}u-\mathbb Z_{1,\al}u_{0}}_{L^\infty_{T}\Linftyx}+\norm{\mathscr{G}u-\mathbb Z_{1,\al}u_{0}}_{L^\infty_{T}\Lpx}}\nonumber\\
		&\le \norm{\p_{0}}_{\Xi}+\frac{\max\left\{\overline{\mathscr{C}_{0,\frac{h}{h-1}}}(\alpha,N)\mathscr{C}_h,\overline{\mathscr{C}_{0,1}}(\alpha,N)\mathscr{C}_p\right\}}{\alpha(\log2)^{\frac{1}{p}}(1-3h\kappa\varepsilon^p)}\sqrbrackets{\parentheses{1-\frac{N}{4h}}^{-1}T^{\al\parentheses{1-\frac{N}{4h}}}+T^{\al}}\varepsilon^m\le\varepsilon,
\end{align*}}
where we have used the smallness assumption for the initial data function that {$ 2\norm{u_0}_{\Xi}\le\varepsilon<(3h\kappa)^{\frac{-1}{p}}. $} Hence, the invariance property of $\mathscr{G} $ is ensured.  
Furthermore, by using analogous arguments and the chosen time $ T $, we can easily show that $ \mathscr{G} $ is a strict contraction on $ B_{L^\infty\parentheses{0,T;\Lep}}(0,\varepsilon) $. Then, the Banach principle argument yields that our problem has a unique local-in-time mild solution in $ L^\infty\parentheses{0,T;\Lep} $ and the proof is complete.
\end{proof}

\medskip

\subsubsection{Global-in-time well-posedness results}~
\begin{lemma}{\label{Atienza}}(see \cite[Lemma 8]{Atienza})
Let $ m,n>-1 $ such that $ m+n>-1 $.   Then
\begin{align*}
\sup _{t \in[0, T]} t^{h} \int_{0}^{1} s^{m}(1-s)^n e^{-\mu t(1-s)} \mathrm{d} s\xrightarrow{{\quad\mu\longrightarrow\infty\quad}}0.
\end{align*}
\end{lemma}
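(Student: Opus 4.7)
The plan is to split the supremum over $t \in [0,T]$ into the regime near $t=0$, where the factor $t^h$ produces the decay, and the regime bounded away from zero, where the exponential factor $e^{-\mu t(1-s)}$ does the work. Write
\[
I_\mu(t) := \int_0^1 s^m (1-s)^n e^{-\mu t(1-s)}\,ds.
\]
Observe first that the integrand is pointwise non-increasing in $t$ (for each fixed $s\in[0,1)$, $e^{-\mu t(1-s)}$ decreases), so $t \mapsto I_\mu(t)$ is non-increasing. Moreover, the trivial bound $e^{-\mu t(1-s)} \le 1$ together with $m,n>-1$ yields $I_\mu(t) \le B(m+1,n+1) < \infty$ for every $t \ge 0$.

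Given $\varepsilon > 0$, I would first pick a threshold $t_0 \in (0,T]$ so small that $t_0^{\,h}\, B(m+1,n+1) < \varepsilon/2$. This controls the near-zero regime: for every $t \in [0,t_0]$,
\[
t^h I_\mu(t) \le t_0^{\,h}\, B(m+1,n+1) < \varepsilon/2,
\]
uniformly in $\mu$. (This is the step that implicitly requires $h>0$; for $h=0$ the statement would fail, since $I_\mu(0) = B(m+1,n+1)$.)

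Next, I would show that $\sup_{t \in [t_0,T]} t^h I_\mu(t) \to 0$ as $\mu \to \infty$. Monotonicity gives $\sup_{t \in [t_0,T]} t^h I_\mu(t) \le T^h I_\mu(t_0)$, so it suffices to prove $I_\mu(t_0) \to 0$. For $s \in [0,1)$ the integrand converges to $0$ as $\mu \to \infty$, and it is dominated by the $\mu$-independent integrable function $s^m (1-s)^n$ (integrable precisely because $m,n>-1$); Lebesgue's dominated convergence theorem then gives $I_\mu(t_0) \to 0$. Choosing $\mu$ so large that $T^h I_\mu(t_0) < \varepsilon/2$ and combining with the previous step yields $\sup_{t \in [0,T]} t^h I_\mu(t) < \varepsilon$.

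The only genuinely delicate point is the role of the hypothesis $m+n>-1$: the split-plus-monotonicity argument above only uses $m,n>-1$ and some $h>0$. The stronger assumption $m+n>-1$ is most likely inherited from how the lemma is applied in \cite{Atienza}, where the substitution $w=\mu t(1-s)$ converts $I_\mu(t)$ into $(\mu t)^{-(n+1)}\int_0^{\mu t}(1-w/(\mu t))^m w^n e^{-w}\,dw$ and one needs the integral to behave well at the endpoint $w=\mu t$; this would determine the sharp rate in $\mu$ and the correct range of $h$, and is the one place where I would have to be careful in turning the plan into a fully quantitative statement.
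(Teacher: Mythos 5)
Your argument is correct, but there is nothing in the paper to compare it against: the paper states this lemma as an imported result, citing Lemma 8 of \cite{Atienza} verbatim, and supplies no proof of its own. Your two-regime decomposition is a clean, self-contained elementary proof: on $[0,t_0]$ the bound $e^{-\mu t(1-s)}\le 1$ gives $t^h\int_0^1 s^m(1-s)^n e^{-\mu t(1-s)}\,ds\le t_0^{\,h}B(m+1,n+1)$ uniformly in $\mu$, and on $[t_0,T]$ the monotonicity of the integral in $t$ together with dominated convergence (dominating function $s^m(1-s)^n$, integrable precisely because $m,n>-1$) does the rest. You are also right to flag the two loose ends in the statement as printed: the exponent $h$ is not quantified, positivity of $h$ is genuinely needed (at $t=0$ the supremand is $0^h\,B(m+1,n+1)$, so the claim fails for $h=0$), and the hypothesis $m+n>-1$ is never used in this qualitative argument. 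Neither issue causes trouble in the paper, since the lemma is invoked only with the time exponents $\alpha>0$ and $\alpha\bigl(1-\tfrac{N}{4h}\bigr)>0$; and your diagnosis that $m+n>-1$ is inherited from the quantitative version in \cite{Atienza}, where the exponent is tied to $m+n+1$ so that this hypothesis is exactly positivity of the exponent, is consistent with the substitution $w=\mu t(1-s)$ you sketch. No gaps.
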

\begin{definition}
Let $ X $ be a Banach space. Then, we denote by $ L^{\infty}_{a,b}\parentheses{(0,T];X} $ the subspace of $ L^\infty\left(0,T;X\right) $ such that
\begin{align*}
\sup_{t \in(0, T]}t^ae^{-bt}\norm{\varphi(t)}_X<\infty,\qquad \varphi\in L^{\infty}_{a,b}\parentheses{(0,T];X}.
\end{align*}
for some positive numbers $ a,b. $
\end{definition}
\begin{theorem}
Assume that the initial  function $ \p_{0} $ belongs to $ \0Lep $ and $ m=1 $. Then, Problem \eqref{Main Problem} has a unique solution in $  C\parentheses{(0,T];\Lq}\cap L^{\infty}_{a,b}\parentheses{(0,T];\0Lep}$ for $ a\in(0,1) $ and some $ b_0>0 $. Furthermore, if $ N<4p,a<\min\left\{\frac{1}{2},\frac{\al N}{4p}\right\} $, we have
\begin{align}\label{Stability Inequality}
\norm{u(t)}_{L^p}\le Ct^{-a}e^{b_0t}\norm{u_0}_{\Xi}.
\end{align}
\end{theorem}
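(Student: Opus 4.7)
The plan is to construct a global mild solution by the Banach contraction principle on a weighted function space, following the Chen--Atienza methodology cited in the introduction. For a (small) radius $R>0$ and a (large) parameter $b_0>0$, both to be chosen, I would work on
\[
\mathcal{Y}_{R,b_0,T}=\Bigl\{u\in C((0,T];\Lq)\cap L^{\infty}_{a,b_0}((0,T];\0Lep):\ \sup_{t\in(0,T]}t^{a}e^{-b_0 t}\norm{u(t)}_{\Xi}\le R\Bigr\}
\]
and show that $\mathscr{G}u(t)=\mathbb{Z}_{1,\al}(t)u_0+\int_0^{t}\mathbb{Z}_{2,\al}(t-\tau)G(u(\tau))\,d\tau$ maps $\mathcal{Y}_{R,b_0,T}$ into itself and is a strict contraction with constant independent of $T$, so that global existence follows on letting $T\to\infty$. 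The $\Lq$-continuity of the fixed point is inherited from Proposition \ref{ContinuityProposition}, Theorem \ref{Continuous_uni_operator_topology} and the embedding $\0Lep\hookrightarrow\Lq$ of Lemma \ref{Embedding exp to Lq}, through the same dominated-convergence limit-passage used in Theorem \ref{LocalExistenceTheoremCase1}.

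For the contraction estimate, I fix $h>\max\{p,N/4\}$ and apply Lemma \ref{NonlinearEstimate}; because $m=1$ the $\mathcal{V}^{m-1}$ factor drops out, giving $\norm{G(u)-G(v)}_{L^{h}}\le \mathscr{C}_{h}(1-3h\kappa \mathcal{V}^{p})^{-1}\norm{u-v}_{\Xi}$ on the regime where the geometric series converges. Combined with Lemma \ref{Kernel_apply_to_Orlicz}(i), multiplying by $t^{a}e^{-b_0 t}$ and substituting the weighted bound $\norm{u(\tau)-v(\tau)}_{\Xi}\le\tau^{-a}e^{b_0\tau}\norm{u-v}_{L^{\infty}_{a,b_0}\Xi}$, the contraction constant is governed by
\[
\sup_{t\in(0,T]}t^{\al-\frac{\al N}{4h}}\int_0^{1}s^{-a}(1-s)^{\al-1-\frac{\al N}{4h}}e^{-b_0 t(1-s)}\,ds,
\]
modulo the bounded logarithmic factor of Lemma \ref{Kernel_apply_to_Orlicz}(i). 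Lemma \ref{Atienza} applies with $m=-a\in(-1,0)$, $n=\al-1-\al N/(4h)>-1$ (guaranteed by $h>N/4$), and $m+n>-1$ (attained by taking $h$ large enough), so this supremum tends to $0$ as $b_0\to\infty$. The self-mapping is then deduced by taking $v\equiv 0$, using the linear bound $\norm{\mathbb{Z}_{1,\al}(t)u_0}_{\Xi}\le \norm{u_0}_{\Xi}$ from Lemma \ref{Kernel_apply_to_Orlicz}(ii), and noting that $t^{a}e^{-b_0 t}$ attains its maximum at $t=a/b_0$, giving $\sup_t t^a e^{-b_0 t}\norm{u_0}_\Xi \le (a/(eb_0))^a\norm{u_0}_\Xi \le R/2$ once $b_0$ is chosen sufficiently large.

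For the stability estimate \eqref{Stability Inequality}, I measure both sides of $\mathscr{G}u=u$ in the $\Lp$-norm. The linear part is controlled by $\norm{\mathbb{Z}_{1,\al}(t)u_0}_{\Lp}\le\norm{u_0}_{\Lp}\le C\norm{u_0}_{\Xi}$ (convolution with the $L^{1}$-kernel from Lemma \ref{lemqt} together with the embedding $\0Lep\hookrightarrow\Lp$ in Lemma \ref{Embedding exp to Lq}). The Duhamel integral is bounded via Young's inequality with Lemma \ref{lemqt} by $\int_0^t(t-\tau)^{\al-1-\frac{\al N}{4}(1/h-1/p)}\norm{G(u(\tau))}_{L^{h}}\,d\tau$. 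The hypothesis $N<4p$ makes $\mathbb{K}_{2,\al}(t-\tau)$ integrable with a useful exponent, the condition $a<\al N/(4p)$ matches the rate $t^{-a}$ requested on the left-hand side, and $a<1/2$ is precisely what is needed so that the convolution of $(t-\tau)^{\al-1-\frac{\al N}{4}(1/h-1/p)}$ against $\tau^{-a}e^{b_0\tau}$ reproduces a factor of the form $t^{-a}e^{b_0 t}$, once again through an application of Lemma \ref{Atienza}.

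The main obstacle is the tension between the smallness requirement $3h\kappa \mathcal{V}^{p}<1$ of Lemma \ref{NonlinearEstimate} and the pointwise blow-up $\norm{u(\tau)}_{\Xi}\le R\tau^{-a}e^{b_0\tau}$ inherited from the weighted space: the weighted-norm bound is incompatible with a uniform-in-time small value of $\mathcal{V}$, and the entire argument hinges on absorbing the resulting growth into the kernel singularities using the exponential weight $e^{-b_0 t}$. Calibrating $R$ small, $b_0$ large (depending on $\norm{u_0}_{\Xi}$), and $h$ large enough that $\al-\al N/(4h)>a$, is what finally makes Lemma \ref{Atienza} deliver a contraction constant strictly below one; it is precisely this weight that upgrades the would-be small-data local statement into a global one.
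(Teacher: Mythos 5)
Your overall architecture (weighted space $L^{\infty}_{a,b}$, contraction via Lemma \ref{Atienza} with $b_0$ large, stability via an $L^p$ bootstrap and Gr\"onwall) matches the paper's, but there is a genuine gap at exactly the point you flag as ``the main obstacle,'' and the fix you propose does not work. Lemma \ref{NonlinearEstimate} gives $\norm{G(u)-G(v)}_{L^h}\le \mathscr{C}_h\sum_{j\ge0}(3h\kappa\mathcal{V}^p)^j\norm{u-v}_{\Xi}$, and this is only finite when $3h\kappa\mathcal{V}(\tau)^p<1$ pointwise in $\tau$. In your space the only control is $t^{a}e^{-b_0t}\norm{u(t)}_{\Xi}\le R$, so $\norm{u(\tau)}_{\Xi}$ may be of size $R\tau^{-a}e^{b_0\tau}$, which exceeds the threshold $(3h\kappa)^{-1/p}$ for $\tau$ near $0$ no matter how small $R$ is; on that set the geometric series diverges and the quantity you propose to ``absorb into the kernel singularities'' is $+\infty$, so there is nothing to absorb. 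Moreover, smallness of $R$ is not even available here: the theorem assumes no smallness of $u_0$, and the fixed point must dominate $\mathbb{Z}_{1,\al}(t)u_0$, whose $\Xi$-norm is comparable to $\norm{u_0}_{\Xi}$ and can already violate $3h\kappa\norm{u_0}_{\Xi}^p<1$. Taking $h$ large only worsens the threshold $(3h\kappa)^{-1/p}$.

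The missing idea is where the hypothesis $u_0\in\0Lep$ (rather than $\Lep$) is actually used. Since $\0Lep=\overline{C_0^{\infty}(\R^N)}^{\Lep}$, the paper decomposes each $u(t)=u_1(t)+u_2(t)$ with $u_1(t)\in C_0^{\infty}(\R^N)$ and $\norm{u_2(t)}_{\Xi}<(3h\kappa 2^{p-1})^{-1/p}$ (see \eqref{split u}). The exponential factor generated by the bounded piece $u_1$ is controlled by its $L^{\infty}$-norm, contributing only the constant $e^{\kappa 2^{p-1}\mathscr{C}_T^p}$, while the piece $u_2$ is below the convergence threshold, so the series converges and one obtains the pseudo--globally Lipschitz bound $\norm{G(u(t))-G(v(t))}_{L^h}\le C\norm{u(t)-v(t)}_{\Xi}$ of \eqref{Pseudo_Globally_Lipschitz} with $C$ independent of the pointwise size of $\norm{u(t)}_{\Xi}$. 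Only after this linearization does the weighted-norm/Lemma \ref{Atienza} machinery you describe go through. (Two smaller points: the contraction and self-mapping in the paper are obtained on the full set $\mathbf{O}$ precisely because of \eqref{Pseudo_Globally_Lipschitz}, with no radius $R$ needed; and the condition $a<1/2$ in the stability part enters because the H\"older step squares the weight, requiring $\tau^{-2a}$ to be integrable, rather than through Lemma \ref{Atienza} as you suggest.)
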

\begin{proof}
The proof starts by handling the nonlinear source term on the RHS of the first equation of Problem \eqref{Main Problem}. For every $ u\in L^{\infty}_{a,b}\parentheses{(0,T];\0Lep} $, we can choose two functions $ u_1,u_2 $ such that
\begin{align}\label{split u}
\begin{cases}
\dis u(t)=u_1(t)+u_2(t)\qquad&\forall t\in(0,T],\\
\dis u_1(t)\in C_0^{\infty}(\R^N),&\forall t\in(0,T],\\
\dis\norm{u_2(t)}_{\Xi}<(3h\kappa {2^{p-1}})^{\frac{-1}{p}},&\forall t\in(0,T].
\end{cases}
\end{align}
For brevity, we set
\begin{align*}
\mathscr{C}_T:=\sup_{u\in L^{\infty}_{a,b}\parentheses{(0,T];\0Lep}}\left\{\sup_{t \in(0, T]}\Big\{|u_1(t)|:u_1(t)~\text{satisfies~}\eqref{split u}\Big\}\right\}.
\end{align*}

\noindent Then, for any $h\in\left[p,\infty\right)$, 
\begin{align*}
\norm{G(u(t))-G(v(t))}_{L^h}\le L\norm{|u(t)-v(t)|\parentheses{e^{\kappa|u_1(t)+u_2(t)|^p}+e^{\kappa|v_1(t)+v_2(t)|^p}}}_{L^h}.
\end{align*}
	
\noindent Using the inequality $ (a+b)^q<{2^{p-1}}(a^q+b^q) $ for $ a,b>0,q>1 $, we obtain
\begin{align*}
\norm{G(u(t))-G(v(t))}_{L^h}&\le L\norm{|u(t)-v(t)|\parentheses{e^{\kappa {2^{p-1}}|u^p_1(t)+u^p_2(t)|}+e^{\kappa {2^{p-1}}|v^p_1(t)+v^p_2(t)|}}}_{L^h}\nonumber\\
&\le Le^{\kappa {2^{p-1}}\mathscr{C}_T^p}\norm{|u(t)-v(t)|\parentheses{e^{\kappa {2^{p-1}}|u^p_2(t)|}+e^{\kappa {2^{p-1}}|v^p_2(t)|}}}_{L^h}.
\end{align*}

\noindent Then, it follows from Lemma \ref{NonlinearEstimate} and Lemma \ref{Embedding exp to Lq} that
\begin{align}
\norm{G(u(t))-G(v(t))}_{L^h}&\le C\norm{u(t)-v(t)}_{\Xi}\sum_{j=0}^{\infty}\left(3h\kappa {2^{p-1}}\sqrbrackets{\max_{w\in\{u_2,v_2\}}\left\{\sup_{t \in(0, T]}\norm{w(t)}_{\Xi}\right\}}^p\right)^j.
\end{align}
Then, by \eqref{split u}, the following nonlinear estimate holds
\begin{align}\label{Pseudo_Globally_Lipschitz}
\norm{G(u(t))-G(v(t))}_{L^h}\le C\norm{u(t)-v(t)}_{\Xi}.
\end{align}
Now, we define the following set
\begin{align*}
\mathbf{O}:=\left\{u\in  C\parentheses{(0,T];\Lp}\cap L^{\infty}_{a,b}\parentheses{(0,T];\0Lep}\big|\sup_{t\in(0,T]}t^ae^{-bt}\norm{u(t)}_{\Xi}<\infty\right\}
\end{align*}
and a mapping $ \mathscr{H} $ that maps $ \mathbf{O} $ to itself, formulated by
\begin{align}\label{Formula of H}
\mathscr{H}u(t)=\mathbb{Z}_{1,\al}(t)\p_{0}+\int_{0}^{t}\mathbb{Z}_{2,\al}(t-\tau)G(u(\tau))d\tau.
\end{align}
Since $u_0\in\0Lep$,  Lemma \ref{ContinuityProposition} ensures the continuity on $ (0,T] $ of the first term on the RHS of \eqref{Formula of H}. Furthermore, combining \eqref{Pseudo_Globally_Lipschitz} and the same argument as in Theorem \ref{LocalExistenceTheoremCase1}, we have
\begin{align*}
\mathscr{H}u-\mathbb{Z}_{1,\al}\p_{0}\in C\parentheses{(0,T];\Lp}.
\end{align*} 
In addition, for any $ u\in\mathbf{O} $, $ \mathscr{H}u$ is completely bounded in time.  Notice also that  $ u\in L^{\infty}_{a,b}\parentheses{(0,T];\0Lep} $. Then, the Young convolution inequality and Lemma \ref{lemqt} imply that
\begin{align*}
\norm{\mathscr{H}u(t)-\mathbb{Z}_{1,\al}(t)u_0}_{L^{\infty}}&\le\int_{0}^{t}\norm{\mathbb{Z}_{2,\al}(t-\tau)G(u(\tau))}_{L^{\infty}}d\tau\nonumber\\
&\le\int_{0}^{t}(t-\tau)^{\alpha\parentheses{1-\frac{ N}{4h}}-1}\norm{G(u(\tau))}_{L^h}d\tau,~~ \text{~for~}h>\max\left\{\frac{N}{4},p\right\}.
\end{align*}
By using \eqref{Pseudo_Globally_Lipschitz}, we obtain
\begin{align*}
\norm{\mathscr{H}u(t)-\mathbb{Z}_{1,\al}(t)u_0}_{L^{\infty}}&\le C\sup_{t \in(0, T]}\norm{u(t)}_{\Xi}\parentheses{\int_{0}^{t}(t-\tau)^{\alpha\parentheses{1-\frac{ N}{4h}}-1}d\tau}=\frac{CT^{\alpha\parentheses{1-\frac{ N}{4h}}}}{\alpha\parentheses{1-\frac{ N}{4h}}}\sup_{t \in(0, T]}\norm{u(t)}_{\Xi}.
\end{align*}
Likewise, we also have
\begin{align*}
\norm{\mathscr{H}u(t)-\mathbb{Z}_{1,\al}(t)u_0}_{L^p}\le C\sup_{t \in(0, T]}\norm{u(t)}_{\Xi}\parentheses{\int_{0}^{t}(t-\tau)^{\alpha-1}d\tau}=CT^{\alpha}{\al}^{-1}\sup_{t \in(0, T]}\norm{u(t)}_{\Xi}.
\end{align*}
	
From two  results above, Lemmas \ref{Embedding_Lq_Linfty} and \ref{Kernel_apply_to_Orlicz}, the conclusion $ \mathscr{H}u\in L^{\infty,*}\parentheses{(0,T];\0Lep} $ can be drawn through the following one
\begin{align*}
\norm{\mathscr{H}u(t)}_{\Xi}&\le\norm{\mathbb{Z}_{1,\al}(t)u_0}_{\Xi}+\norm{\mathscr{H}u(t)-\mathbb{Z}_{1,\al}(t)u_0}_{\Xi}\le\norm{u_0}_{\Xi}+C\parentheses{T^{\alpha\parentheses{1-\frac{ N}{4h}}}+T^{\alpha}}\sup_{t \in(0, T]}\norm{u(t)}_{\Xi}.
\end{align*}

We note that $ \mathbf{O} $ is a complete metric space with the metric
\begin{align*}
d_{\mathbf{O}}(u,v):=\sup_{t\in(0,T]}t^ae^{-bt}\norm{u(t)-v(t)}_{\Xi}.
\end{align*}
	
The remaining part of this proof is to show that $ \mathscr{H} $ is a strict contraction on $ \mathbf{O} $ with respect to the above metric. Indeed, for any $ u,v\in\mathbf{O} $, we have
\begin{align}
&t^ae^{-bt}\norm{\mathscr{H}u(t)-\mathscr{H}v(t)}_{L^p+L^\infty}\nn\\
&\le\int_{0}^{t}(t-\tau)^{\alpha-1}\norm{\mathbb{Z}_{2,\al}(t-\tau)\sqrbrackets{G(u(\tau))-G(v(\tau))}}_{L^p+L^\infty}d\tau\nonumber\\
	&\le t^ae^{-bt}\int_{0}^{t}(t-\tau)^{\alpha-1}\norm{G(u(\tau))-G(v(\tau))}_{L^p+L^\infty}d\tau\nn\\
		&\nonumber\le Cd_{\mathbf{O}}(u,v)t^a\int_{0}^{t}\tau^{-a}\sqrbrackets{(t-\tau)^{\alpha-1}+(t-\tau)^{\alpha\parentheses{1-\frac{ N}{4h}}-1}}e^{-b(t-\tau)}d\tau.
	\end{align}
	
	\noindent Using the substitution technique, the latter inequality becomes
	\begin{align*}
		t^ae^{-bt}\norm{\mathscr{H}u(t)-\mathscr{H}v(t)}_{L^p+L^\infty}&\le Cd_{\mathbf{O}}(u,v) t^{\alpha}\int_{0}^{1}\tau^{-a}(1-\tau)^{\alpha-1}e^{-bt(1-\tau)}d\tau\nonumber\\
		&+ Cd_{\mathbf{O}}(u,v) t^{\alpha\parentheses{1-\frac{ N}{4h}}}\int_{0}^{1}\tau^{-a}(1-\tau)^{\alpha\parentheses{1-\frac{ N}{4h}}-1}e^{-bt(1-\tau)}d\tau.
	\end{align*}
Choosing $ 0<a<\alpha<1$ and $ 4h>N $, we  use Lemma~\ref{Atienza} to obtain
	\begin{align*}
		\begin{cases}
			\dis\lim\limits_{b\to \infty}\sup_{t\in(0,T]}t^{\alpha}\int_{0}^{1}\tau^{-a}(1-\tau)^{\alpha-1}e^{-bt(1-r)}d\tau=0,\\
			\dis\lim\limits_{b\to \infty}\sup_{t\in(0,T]}t^{\alpha\parentheses{1-\frac{ N}{4h}}}\int_{0}^{1}\tau^{-a}(1-\tau)^{\alpha\parentheses{1-\frac{ N}{4h}}-1}e^{-bt(1-r)}d\tau=0.
		\end{cases}
	\end{align*}	
Hence, there exists a large number $ b_0 $ such that the following holds
	\begin{align*}
		t^ae^{-b_0t}\norm{\mathscr{H}u(t)-\mathscr{H}v(t)}_{L^p+L^\infty}\le\mathcal{L}(\log2)^{-p}d_{\mathbf{O}}(u,v),
	\end{align*}
	where $ \mathcal{L} $ is a positive constant less than 1. This implies that
	\begin{align*}
		d_{\mathbf{O}}(\mathscr{H}u,\mathscr{H}v)\le\mathcal{L}d_{\mathbf{O}}(u,v).
	\end{align*}
Therefore, $ \mathscr{H} $ has a unique fixed point in $ \mathbf{O} $. Therefore, we can conclude that there exists a unique solution to Problem \eqref{Main Problem}. 
	
Next, let us verify the correctness of the statement \eqref{Stability Inequality} as $ N<4p $. Firstly, if $ t\le1 $, we have
\begin{align*}
t^ae^{-b_0t}\norm{\mathbb{Z}_{1,\al}(t)u_0}_{L^p}\le C\norm{\mathbb{Z}_{1,\al}(t)u_0}_{L^p}\le C\norm{u_0}_{L^p}\le C\norm{u_0}_{\Xi}.
\end{align*}
In addition, if $ t>1 $, by using the Cauchy inequality and Lemma \ref{Kernel_apply_to_Orlicz}, we obtain
\begin{align*}
t^ae^{-b_0t}\norm{\mathbb{Z}_{1,\al}(t)u_0}_{L^p}&\le C\norm{\mathbb{Z}_{1,\al}(t)u_0}_{\Xi}\le Ct^{a-\frac{\al N}{4p}}\left[\log\parentheses{2t^{\frac{-\alpha N}{8}}}\right]^{\frac{-1}{p}}\norm{u_0}_{L^p}\le C\norm{u_0}_{\Xi},
\end{align*} 
where we have used the assumption that $ a\le\frac{\al N}{4p}. $
On the other hand, if $ u\in L^{\infty}\parentheses{(0,T];\0Lep}, $ we can use \eqref{Pseudo_Globally_Lipschitz} to derive the following estimate
\begin{align*}
\norm{G(u(t))}_{L^p}\le C\norm{u(t)}_{\Xi}.
\end{align*}
It follows immediately that 
\begin{align*}
&t^ae^{-b_0t}\int_{0}^{t}(t-\tau)^{\al-1}\norm{\mathbb{Z}_{2,\al}(t)(t-\tau)G(u(\tau))}_{L^p}d\tau \le Ct^ae^{-b_0t}\int_{0}^{t}(t-\tau)^{\al-1}\norm{\mathbb{Z}_{2,\al}(t-\tau)u(\tau)}_{\Xi}d\tau.
\end{align*} 
Thanks to the assumption that $ N<4p $, Lemma \ref{Kernel_apply_to_Orlicz} implies
\begin{align*}
&t^ae^{-b_0t}\norm{u(t)-\mathbb{Z}_{1,\al}(t)u_0}_{L^p}\le Ct^ae^{-b_0t}\int_{0}^{t}(t-\tau)^{\al\parentheses{1-\frac{N}{4p}}-1}\left[\log\parentheses{1+(t-\tau)^{\frac{-\alpha N}{4}}}\right]^{\frac{-1}{p}}\norm{u(\tau)}_{L^p}d\tau.
\end{align*}
Thanks to the H\"older inequality, 
\begin{align*}
t^ae^{-b_0t}\norm{u(t)-\mathbb{Z}_{1,\al}(t)u_0}_{L^p}\le C&\parentheses{t^{2a}\int_{0}^{t}(t-\tau)^{\alpha\parentheses{1-\frac{N}{4p}}-1}\left[\log\parentheses{1+(t-\tau)^{\frac{-\alpha N}{4}}}\right]^{\frac{-2}{p}}\tau^{-2a}e^{-2b_0(t-\tau)}d\tau}^{\frac{1}{2}}
\nonumber\\
\times&\parentheses{\int_{0}^{t}(t-\tau)^{\al\parentheses{1-\frac{N}{4p}}-1}\sqrbrackets{\tau^ae^{-b_0{\tau}}\norm{u({\tau})}_{L^p}}^2d\tau}^{\frac{1}{2}}
	\end{align*}
	
	To deal with the $ \log $ term, let us denote by $ \gamma $ the infimum of the set $ \{z>0: z>2 \log (1+z)\} $. If $t^{\frac{-\alpha N}{4}}>\gamma,$ the results will be covered by the opposite case, for this reason, we only need to consider the case $t^{\frac{-\alpha N}{4}}>\gamma$. As $t^{\frac{-\alpha N}{4}}>\gamma$, thanks to Lemma \ref{Atienza}, the two following claims will be obtained
	
\noindent\textbf{\underline{Claim 1}}. {If $ \tau\le t-\gamma^{\frac{-4}{\alpha N}} $, we have $ (t-\tau)^{\frac{\alpha N}{4}}<\gamma $. This one implies that 
	\begin{align*}
		\log \left(1+(t-\tau)^{\frac{-\alpha N}{4}}\right)>\frac{(t-\tau)^{\frac{-\alpha N}{4}}}{2},\quad\text{~for~} 0<\tau\le t-\gamma^{\frac{-4}{\alpha N}}.
	\end{align*}
	Based on the above inequality, one can derive that}
\begin{align}
&t^{2a}\int_{0}^{t-\gamma^{\frac{-4}{\alpha N}}}(t-\tau)^{\alpha\left(1-\frac{N}{4 p}\right)-1}\left[\log \left(1+(t-\tau)^{\frac{-\alpha N}{4}}\right)\right]^{\frac{-2}{p}}\tau^{-2a}e^{-2b_0(t-\tau)}d\tau\nonumber\\ 
&\le C t^{2a}\int_{0}^{t-\gamma^{\frac{-4}{\alpha N}}}(t-\tau)^{\alpha\left(1+\frac{N}{4 p}\right)-1}\tau^{-2a}e^{-2b_0(t-\tau)}d\tau\\
&\le Ct^{^{\alpha\left(1+\frac{N}{4 p}\right)}}\int_{0}^{1}(1-r)^{\alpha\left(1+\frac{N}{4 p}\right)-1}\tau^{-2a}e^{-2b_0t(1-\tau)}d\tau\le C.\nonumber
\end{align}
	
\noindent\textbf{\underline{Claim 2}}. On the contrary, if $ t-r<\gamma^{\frac{-4}{\alpha N}} $, we have
	\begin{align*}
		&t^{2a}\int_{t-\gamma^{\frac{-4}{\alpha N}}}^{t}(t-\tau)^{\alpha\left(1-\frac{N}{4 p}\right)-1}\left[\log\left(1+(t-\tau)^{\frac{-\alpha N}{4}}\right)\right]^{\frac{-2}{p}}\tau^{-2a}e^{-2b_0(t-\tau)}dr\nonumber\\
		&\le Ct^{2a}[\log (1+\gamma)]^{\frac{-2}{p}} \int_{t-\gamma^{\frac{-4}{\alpha N}}}^{t}(t-\tau)^{\alpha\left(1-\frac{N}{4 p}\right)-1}\tau^{-2a}e^{-2b_0(t-\tau)}d\tau\\
		&\le  t^{^{\alpha\left(1-\frac{N}{4 p}\right)}}\int_{0}^{1}(1-r)^{\alpha\left(1-\frac{N}{4 p}\right)-1}\tau^{-2a}e^{-2b_0t(1-\tau)}d\tau\le C\nonumber.
	\end{align*}
	Combining the above results, the triangle inequality and the inequality $ (a+b)^2\le2(a^2+b^2) $ we deduce
\begin{align*}
t^{2a}e^{-2b_0t}\norm{u(t)}_{L^p}^2\le C\norm{u_0}^2_{\Xi}+C\int_{0}^{t}(t-\tau)^{\al\parentheses{1-\frac{N}{4p}}-1}\sqrbrackets{\tau^ae^{-b_0\tau}\norm{u(\tau)}_{L^p}}^2d\tau.
\end{align*}
	Now, we are in the position to apply the Gr\"onwall inequality to achieve the desired result
	\begin{align*}
		\norm{u(t)}_{L^p}\le Ct^{-a}e^{b_0t}\norm{u_0}_{\Xi}.
	\end{align*}
\end{proof}

\section{ Time-fractional Cahn-Hilliard on the unbounded domain $ \R^N $ }
In this section, we  consider the following time fractional Cahn-Hilliard on $ \R^N $ 
\begin{align}\tag{P2}	\label{Pro}
\begin{cases}
\dis \partial^\al_{0|t}  u(t,x)  + \Delta^2 u(t,x) -  \Delta  F(t,x,u)=0,&\quad  \mbox{in}\quad\R^+\times\R^N, \\ 
\dis u(0,x)= u_0(x), & \quad \mbox{in} \quad \R^N.
\end{cases}
\end{align}

\begin{lemma} (See \cite{Robinson})
	If $1\le b \le p \le d$ and $v \in L^b(\mathbb R^N) \cap L^d(\mathbb R^N)$, then $v \in L^p(\mathbb R^N)$ where
	\begin{equation}
		\|u\|_{L^p} \le \|u\|_{L^b}^\al \|u\|_{L^d}^{1-\al},~~~~\quad \quad \frac{1}{p}= \frac{\al}{b}+\frac{1-\al}{d}
	\end{equation}
	
\end{lemma}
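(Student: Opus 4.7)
The statement is the classical log-convexity (interpolation) inequality for $L^p$ norms, so the natural approach is a direct application of Hölder's inequality to a suitable factorization of $|v|^p$.

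The plan is to write
\begin{equation*}
|v(x)|^p = |v(x)|^{\alpha p} \cdot |v(x)|^{(1-\alpha)p},
\end{equation*}
and then apply Hölder's inequality with a pair of conjugate exponents chosen so that the first factor lives in $L^{b/(\alpha p)}$ and the second in $L^{d/((1-\alpha)p)}$. The defining relation $\frac{1}{p}=\frac{\alpha}{b}+\frac{1-\alpha}{d}$ is exactly the condition that multiplying through by $p$ gives $\frac{\alpha p}{b}+\frac{(1-\alpha)p}{d}=1$, so the two exponents are genuinely Hölder conjugate.

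With this choice, Hölder yields
\begin{equation*}
\int_{\mathbb{R}^N} |v|^p \, dx \le \Bigl(\int_{\mathbb{R}^N}|v|^b\,dx\Bigr)^{\alpha p / b} \Bigl(\int_{\mathbb{R}^N}|v|^d\,dx\Bigr)^{(1-\alpha)p/d},
\end{equation*}
and taking the $p$-th root produces the claimed bound
\begin{equation*}
\|v\|_{L^p} \le \|v\|_{L^b}^{\alpha}\,\|v\|_{L^d}^{1-\alpha}.
\end{equation*}
The finiteness of the right-hand side, thanks to the hypothesis $v\in L^b(\mathbb{R}^N)\cap L^d(\mathbb{R}^N)$, then gives $v\in L^p(\mathbb{R}^N)$.

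Before invoking Hölder I would quickly handle the degenerate cases: if $b=p$ then $\alpha=1$, if $p=d$ then $\alpha=0$, and if $b=d$ then necessarily $b=p=d$, in all of which the inequality is trivial. For the case $d=\infty$, one would use the obvious pointwise bound $|v|^{(1-\alpha)p}\le \|v\|_{L^\infty}^{(1-\alpha)p}$ in place of the second Hölder factor, with the convention $1/d=0$. I do not anticipate any real obstacle here; the only thing to be careful about is verifying that the two exponents $b/(\alpha p)$ and $d/((1-\alpha)p)$ are both $\ge 1$ (which follows from $b\le p\le d$) so that Hölder's inequality legitimately applies.
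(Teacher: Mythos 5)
Your proof is correct and is the standard Hölder interpolation argument; the paper itself gives no proof of this lemma, simply citing Robinson--Rodrigo--Sadowski, where exactly this factorization $|v|^p=|v|^{\alpha p}|v|^{(1-\alpha)p}$ with conjugate exponents $b/(\alpha p)$ and $d/((1-\alpha)p)$ is used. Your verification that these exponents are $\ge 1$ and your handling of the degenerate and $d=\infty$ cases are all sound, so there is nothing to add.
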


We first consider the following linear problem
\begin{align} 	\label{Pro1}
	\begin{cases}
		D^\al_t  \widehat u (t,\xi)  + |\xi|^4 \widehat u (t,\xi)  = \widehat  {\Delta F } (t,\xi) ,&\quad  \mbox{in} \quad (0,T] \times \mathbb R^N , \\ 
		\widehat u(0,\xi)= \widehat u_0(\xi), & \quad \mbox{in} \quad \mathbb R^N,    
	\end{cases} 
\end{align} 
As in the previous section, by the Duhamel principle, the solution to problem \eqref{Pro1} is given by
\begin{equation} \label{Fu}
	\widehat u (t,\xi)= E_{\al,1} (-t^{\alpha} |\xi|^4) \widehat u_0(\xi)+ \int_0^t (t-s)^{\al-1} E_{\al,\al} (- (t-\tau)^\al  |\xi|^4) \widehat  {\Delta F }  (\tau,\xi) d\xi
\end{equation}

To deal with the source term in the form of $ \Delta H $, we have to do some revisions to the generalized formula of solution. Applying the fact that
$
\frac{d}{dx} \Big(f(x)*g(x) \Big)= \Big( \frac{d}{dx}f(x) \Big)* g(x),
$
we immediately have that
\begin{align*}
	\Delta \mathbb Z_{i,\al}(t) v(x)&=  \Delta \Big( \mathbb K_{i,\al} (t,x) * v(t,x)\Big)=\Big(  \Delta v(t,x) \Big)* \mathbb K_{i,\al} (t,x) = \int_{\mathbb R^N}  \Delta  \mathbb K_{i,\al} (t,x-y) v(t,y)dy ,~~i=1,2.
\end{align*}
Next, we  show that $u$ satisfies  the following equality
\begin{align*}
	u(t,x)= \underbrace{\mathbb Z_{1,\al} (t) u_0(x)}_{\mathscr I_1(t,x)}+ \underbrace{\int_0^t \Delta  \mathbb Z_{2,\al} (t-s) F(u(s,x))ds}_{\mathscr I_2(t,x)}.
\end{align*}
Using \eqref{a11}, we infer  that  the Fourier transform of the first quantity $\mathscr I_1(t,x)$ is given by
\begin{align*}
	\widehat {~~\mathscr I_1~~} (t,\xi)=  \widehat {~~~~\mathbb K_{1,\al}  * u_0~~~~}=\widehat {\mathbb K_{1,\al} } \widehat { u_0} = E_{\al,1} (-t |\xi|^4) \widehat u_0(\xi). 
\end{align*}
It is not difficult to  verify  that the Fourier transformation of the given second quantity $ \mathscr I_2 (t, x) $ is as follows
\begin{align*}
	\widehat {~~\mathscr I_2~~} (t,\xi)&=  \int_{\mathbb R^N } e^{-i <\xi, x>}  \int_0^t \Delta  \mathbb Z_{2,\al} (t-\tau) F(u(\tau,x))d\tau dx\nn\\
	&= \int_0^t \Big( \int_{\mathbb R^N } e^{-i <\xi, x>}   \mathbb K_{2,\al} (t-\tau) \Delta F(u(\tau,x)) dx \Big) d\tau \nn\\
	&= \int_0^t  \mathcal F \Big(  \Big(  \Delta F(u(\tau,x)) \Big)* \mathbb K_{2,\al} (t-\tau,x)\Big) d\tau = \int_0^t \widehat{ ~~\mathbb K_{2,\al}~~} (t-\tau,\xi) \widehat  {\Delta F} (u(\tau,x))  d\tau
\end{align*}
where we have used the formula   
$
\mathcal F (f*g)= \widehat f \widehat g, $
and  
from the following fact $$\widehat{ ~~\mathbb K_{2,\al}~~} (t-\tau,\xi) = (t-\tau)^{\al-1} E_{\al,\al} \big(-(t-\tau)^\al |\xi|^4\big) ,$$ 
we  arrive at the following equality
\[
\widehat {~~\mathscr I_2~~} (t,\xi)= \int_0^t (t-s)^{\al-1} E_{\al,\al} (- (t-s)^\al  |\xi|^4) \widehat  {~~\Delta F~~} (u (s,\xi) ) d\xi.
\]

We establish the local well-posedness of solutions for Problem \eqref{Pro} with small initial data in $\R^N$ by using
Kato’s method (see \cite{Liu}). More precisely,  our main result in this section  can be stated as follows.
\begin{theorem}
Let {$ \alpha>\frac{1}{2} $ and} $ \mathcal{E}>0 $ be a sufficienty small constant. Assume that  
\begin{align*}
{\norm{u_0}_{L^\infty}\le\mathcal{E}\qquad\text{and}\qquad}\max_{|z| \le \mathcal E } \sum_{k=1}^L \Big| D^k F(z)\Big| =\mathcal  A. 
\end{align*}
	Then, Problem \eqref{Pro} has a unique solution $u(t,x)$ on the strip 
	\[
	\mathscr P_{ T_0}= \Big\{ (t,x): 0<t \le  T_0,~~x \in \mathbb R^N  \Big \}
	\]
	such that
	\begin{equation} \label{bound}
		\|u(t)\|_{L^\infty} \le 2 \mathcal E,~~~0\le t \le T_0.
	\end{equation}
	Here  $T_0$ is given by
	\begin{equation} \label{b1}
		T_0 \le  \min \Bigg\{ 1~,~ \bigg(     \frac{\al \Gamma(\al/2+1)} {4\Big( \int_{\mathbb R^N} \Big|  \overline{\mathscr B}_2 (z) \Big| dz \Big) \mathcal A \Gamma \left( \frac{3}{2} \right) \Gamma(\al/2) }  \bigg)^{\frac{2}{\al}}, \bigg(     \frac{\al \Gamma(\al/2+1)} {4\Big( \int_{\mathbb R^N} \Big|  \overline{\mathscr B}_2 (z) \Big| dz \Big) \mathcal A \Gamma \left( \frac{3}{2} \right) }  \bigg)^{\frac{2}{\al}}  \Bigg\}.
	\end{equation}
In addition, for each partition $0< t_1 < t_2<...<t_N <t  \le  T_0$, it holds the following estimate
	\begin{align*}
		{\|D^k u(t)\|_{L^\infty}}\le  (t-t_k)^{-\frac{\al k}{4}} \mathcal Q_k (\mathcal E, \al, t_k-t_1, t-t_k), \quad \quad t_k < t \le T_0,
	\end{align*}
	where $\mathcal Q_k$ is a continuous increasing function of $t-t_k$. 
	
\end{theorem}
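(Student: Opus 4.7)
The plan is to implement Kato's method in the spirit of \cite{Liu}: construct $u$ as the unique fixed point of the integral operator
\begin{align*}
\mathscr{T}u(t,x) = \mathbb{Z}_{1,\al}(t) u_0(x) + \int_0^t \Delta \mathbb{Z}_{2,\al}(t-s) F(u(s,x))\,ds
\end{align*}
in the closed ball $\mathcal{B}_{T_0}:=\{u\in L^\infty((0,T_0]\times\R^N): \|u(t)\|_{L^\infty}\le 2\mathcal{E}\}$. The central analytic ingredient is the $L^1$-bound furnished by Lemma \ref{lemqt} at $k=2,\,p=1$ (admissible since $k=2<4-N(1-1/p)=4$):
\begin{align*}
\|\Delta \mathbb{K}_{2,\al}(t)\|_{L^1(\R^N)}\le \overline{\mathscr{C}_{2,1}}(\al,N)\,t^{\al/2-1},\qquad \overline{\mathscr{C}_{2,1}}=\frac{N\al\,\Gamma(3/2)\int_{\R^N}|\overline{\mathscr{B}_2}(z)|\,dz}{\Gamma(1+\al/2)}.
\end{align*}
Combined with $\|\mathbb{K}_{1,\al}(t)\|_{L^1}\le 1$ (also Lemma \ref{lemqt}) and Young's convolution inequality, this is precisely the tool that transfers the extra Laplacian from the source onto the kernel while keeping a time singularity $(t-s)^{\al/2-1}$ that remains integrable at the endpoint.

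I would first verify invariance: $\|\mathbb{Z}_{1,\al}(t) u_0\|_{L^\infty}\le \|u_0\|_{L^\infty}\le\mathcal{E}$, while the mean value theorem together with $\max_{|z|\le 2\mathcal{E}}|F'(z)|\le\mathcal{A}$ (any constant $F(0)$ is annihilated by $\Delta$) yields $|F(u(s,x))|\le 2\mathcal{E}\mathcal{A}$, so
\begin{align*}
\|\mathscr{T}u(t)-\mathbb{Z}_{1,\al}(t)u_0\|_{L^\infty}\le 2\mathcal{E}\mathcal{A}\,\overline{\mathscr{C}_{2,1}}\int_0^t(t-s)^{\al/2-1}ds = \frac{4\mathcal{E}\mathcal{A}\,\overline{\mathscr{C}_{2,1}}}{\al}\,t^{\al/2}.
\end{align*}
The second quantity in \eqref{b1} is exactly the threshold forcing this to be $\le \mathcal{E}$, whereupon $\|\mathscr{T}u(t)\|_{L^\infty}\le 2\mathcal{E}$. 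The contraction estimate repeats the computation with the Lipschitz bound $|F(u)-F(v)|\le\mathcal{A}|u-v|$ on $\{|u|,|v|\le 2\mathcal{E}\}$ and produces $\|\mathscr{T}u-\mathscr{T}v\|_\infty\le \tfrac{1}{2}\|u-v\|_\infty$ for $T_0$ at most the first threshold in \eqref{b1}. Banach's fixed-point theorem then delivers the unique solution satisfying \eqref{bound}.

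The derivative estimates I would handle by induction on $k$, viewing $t_k$ as a shifted base time at level $k$. Assuming by induction $\|D^j u(s)\|_{L^\infty}\le (s-t_j)^{-\al j/4}\mathcal{Q}_j$ for $j<k$ and $s>t_j$, I apply $D^k$ to $u=\mathscr{T}u$ and split the Duhamel integral at $t_k$. On $[0,t_k]$ all derivatives land on the kernel, so that by Lemma \ref{lemqt} applied to $D^{k+2}\mathbb{K}_{2,\al}$ (increasing $p$ or redistributing one derivative when $k+2\ge 4$ is needed at $p=1$) the contribution is bounded by $C\,(t-t_k)^{-\al k/4}(t-t_k)^{\al/2}\,2\mathcal{E}\mathcal{A}$. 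On $[t_k,t]$ all derivatives are pushed onto $F(u)$ via Fa\`a di Bruno's formula, giving a sum of products of $(D^j u)_{j\le k}$ against $F^{(j)}(u)$; the lower-order factors are controlled by the induction hypothesis, producing Beta-type integrals $\int_{t_k}^t(t-s)^{\al/2-1}(s-t_{k-1})^{-\al(k-1)/4}ds$ that remain finite and generate the explicit dependence of $\mathcal{Q}_k$ on $t_k-t_1$ and $t-t_k$, while the leading self-referential term $F'(u)D^k u$ closes into a linear Volterra inequality for $w(s):=(s-t_k)^{\al k/4}\|D^k u(s)\|_{L^\infty}$, which is resolved by the generalized Gronwall lemma for weakly singular kernels.

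The main obstacle is precisely this last step. Two difficulties compound: Lemma \ref{lemqt} at $p=1$ restricts $k\in\{0,1,2,3\}$, so for large $k$ not all derivatives can be placed on $\mathbb{K}_{2,\al}$ and some must be transferred onto $F(u)$, where the nonlinear chain rule breeds products of lower-order derivatives of $u$ that are themselves singular at the earlier partition points. The ordered partition $0<t_1<\cdots<t_N<t$ is the device that absorbs this accumulation: each induction level consumes one new partition point, ensuring the Beta exponents stay in the range of integrability and that $\mathcal{Q}_k$ comes out as a continuous, increasing function of $t-t_k$ (with the dependence on the earlier gaps $t_{j+1}-t_j$ packaged into $t_k-t_1$). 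The hypothesis $\al>\tfrac{1}{2}$ enters implicitly in balancing the Beta-exponents obtained by iteration, guaranteeing that the Gronwall argument for $w$ at each level can be closed without a new smallness restriction on $T_0$ beyond the one already imposed in \eqref{b1}.
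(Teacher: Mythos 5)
Your existence-and-uniqueness argument is essentially the paper's: both hinge on the $L^1$ bound for $\Delta\mathbb K_{2,\al}$ from Lemma \ref{lemqt} (inequality \eqref{K22222} with $p=1$, $k=2$), Young's inequality to move the Laplacian off the source and onto the kernel, and the thresholds in \eqref{b1} to make the relevant constant at most $\tfrac12$. The only cosmetic difference is that you invoke Banach's fixed point theorem directly on the ball $\|u(t)\|_{L^\infty}\le 2\mathcal E$, whereas the paper runs the successive approximations \eqref{iteration} and proves the sharper difference bound $\sup_x|v_{j+1}-v_j|\le (\mathcal C_\al t^{\al/2})^{j+1}/\Gamma(\tfrac{\al j}{2}+\tfrac{\al}{2}+1)$, summing the series with a Gamma-function lower bound (this is where the paper actually uses $\al>\tfrac12$, to get $\Gamma(\tfrac{\al j}{2}+\tfrac{\al}{2}+1)\ge\Gamma(2\al+1)$ for $j\ge1$ — not in any Gronwall balancing, as you speculate). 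Either route closes; yours is slightly more economical, the paper's gives an explicit convergence rate.

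The derivative estimates are where your proposal genuinely diverges and where the gap sits. You split the Duhamel integral at $t_k$ and on $[0,t_k]$ want to place all $k+2$ derivatives on $\mathbb K_{2,\al}$; but Lemma \ref{lemqt} at $p=1$ requires $k+2<4$, so this fails already for $k=2$, and "increasing $p$" does not rescue you because an $L^\infty\to L^\infty$ Young estimate against an $L^\infty$ source forces the kernel into $L^1$. You flag this but do not resolve it. The paper's resolution is different on both intervals: it re-bases the representation at $t_k$, writing $D^k v_n(t)=D^k\mathbb Z_{1,\al}(t-t_k)v_n(t_k)+\int_{t_k}^t D^3\mathbb Z_{2,\al}(t-\tau)\,D^{k-1}F(v_{n-1}(\tau))\,d\tau$ (see \eqref{iteration1} and \eqref{c111}), i.e.\ it always keeps exactly three derivatives on $\mathbb K_{2,\al}$ (admissible at $p=1$) and pushes the remaining $k-1$ onto $F$ by the chain rule. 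Because the argument runs on the iterates $v_n$ rather than on the fixed point $u$, the resulting bound involves only $D^j v_{n-1}$ with $j\le k-1$, which is already controlled uniformly in $n$ by the induction on $k$; no self-referential term $F'(u)D^k u$ arises and no singular Gronwall lemma is needed. Your Faà di Bruno plus weakly singular Gronwall scheme could in principle be made to work for the leading term, but as written it does not dispose of the $[0,t_k]$ contribution, so the regularity part of your proposal is incomplete for $k\ge2$.
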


\begin{proof}
	Let us 	consider the following integral 
	\begin{align} \label{int1}
		u(t,x)=\int_{\mathbb R^N}  \mathbb K_{1,\al} (t,x-y) u_0(y)dy + \int_0^t \int_{\mathbb R^N}  \mathbb K_{2,\al} (t-\tau,x-y) F(u(\tau,y))d\tau.
	\end{align}
	First, we show that    for  $T_0$   defined by \eqref{b1},   the integro-differential
	equation \eqref{int1} admits a unique continuous solution $u(t, x)$ on the strip  $	\mathscr P_{ T_0}$.  We apply the successive approximations method given in \cite{Liu}.  Let us consider the following sequence
	\begin{align} \label{iteration}
		v_0(t,x)&= u_0(x),\\
		v_n(t,x)&= \int_{\mathbb R^N}\mathbb K_{1,\al} (t,x-y) u_0(y)dy + \int_0^t \int_{\mathbb R^N} \Delta   \mathbb K_{2,\al} (t-\tau,x-y) F(v_{n-1}(\tau,y))dy d\tau\nn\\
		&= \mathbb Z_{1,\al} (t) u_0(x)+ \int_0^t \Delta  \mathbb Z_{2,\al} (t-\tau) F(v_{n-1}(\tau,x))d\tau\nonumber.
	\end{align}
	It is easy to see that $v_n$ is well defined on $[0, \infty) \times \mathbb R^N$. 
	For $n=0$, we have immediately that
	$
	\|v_0\|_{L^\infty} = \|u_0\|_{L^\infty} \le  \mathcal E. 
	$
Thus, we apply inequality \eqref{K11} for $p=1$ to derive
\begin{align}  \label{b111}
\Big\|\mathbb K_{1,\al} (t)\Big\|_{L^1(\mathbb R^N)} \le   \frac{ \Big( \int_{\mathbb R^N} \Big|  \overline{\mathscr B}_0 (z) \Big| dz \Big)   \Gamma (1) }{ \Gamma (1) } {\le} 1,
\end{align}
where  we have used the fact that
	$
		\int_{\mathbb R^N}  \Big|  \overline{\mathscr B}_0 (z) \Big| dz= \int_{\mathbb R^N}  \Big|  \int_{\mathbb R^N} e^{i z\vartheta  } e^{-|\vartheta|^4} d\vartheta \Big| dz=1.
	$
	Now, we apply induction method to show the following inequality 
	\begin{equation} \label{qt11}
		\opnorm{v_j}_{\mathscr P_{ T_0}}=	\sup_{(t,x) \in {\mathscr P_{ T_0}}} |v_j(t,x)| \le 2 \mathcal E,~~j \ge 1.
	\end{equation}
For $n=1$, we recall the result based on the Young convolution inequality and \eqref{b111} as follows
\begin{align}\label{b11}
\Bigg\|\int_{\mathbb R^N}   \mathbb K_{1,\al} (t,\cdot-y) u_0(y)dy\Bigg\|_{L^\infty}&=\Big\|  \mathbb K_{1,\al} (t) * u_0 \Big\|_{L^\infty}\le \|  \mathbb K_{1,\al} (t) \|_{L^1(\mathbb R^N)}  \|   u_0 \|_{L^\infty} \le  \|   u_0 \|_{L^\infty} \le  \mathcal E. 
\end{align}
	Let us continue to verify that
	\begin{align*} 
		\Bigg\|\int_0^t \Delta  \mathbb Z_{2,\al} (t-\tau) F(v_{0}(\tau,x))d\tau \Bigg\|_{L^\infty}&=\Bigg\|\int_0^t  \int_{\mathbb R^N}  \Delta  \mathbb K_{2,\al} (t-\tau,x-y) F(v_{0}(\tau,y)) dy d\tau \Bigg\|_{L^\infty} \nn\\
		&\le \int_0^t  \Bigg\| \int_{\mathbb R^N}  \Delta  \mathbb K_{2,\al} (t-\tau,x-y) F(v_{0}(\tau,y)) dy \Bigg\|_{L^\infty} d\tau\nn\\
		& \le \int_0^t  \|  \Delta  \mathbb K_{2,\al} (t-\tau,x)  \|_{L^1(\mathbb R^N)}  \|  F(v_{0}(\tau,x)) \|_{L^\infty}   d\tau. 
	\end{align*} 
	By taking $p = 1$ and $k=2$ into  inequality  \eqref{K22222}, we obtain   the following bound immediately
	\begin{align} \label{K22}
		\Big\|\Delta \mathbb K_{2,\al} (t,x) \Big\|_{L^1(\mathbb R^N)}&=	\Big\| D^2 \mathbb K_{2,\al} (t,x) \Big\|_{L^1(\mathbb R^N)}\le  	 \frac{ \Big( \int_{\mathbb R^N} \Big|  \overline{\mathscr B}_2 (z) \Big| dz \Big) \Gamma \left( \frac{3}{2} \right)}{\Gamma \left( 1+\frac{\al}{2} \right)}   t^{\frac{\al}{2}-1} ,
	\end{align}
	which combined  with the condition
	\begin{equation} \label{condF}
		\|  F(v_{0}(s,x)) \|_{L^\infty} \le \mathcal A \sup_{(t,x) \in \mathscr S_0} |v_0(t,x)|, 
	\end{equation}
	imply
\begin{align} \label{b12}
\Bigg\|\int_0^t \Delta  \mathbb Z_{2,\al} (t-s) F(v_{0}(s,x))ds\Bigg\|_{L^\infty}&\le     \frac{ \Big( \int_{\mathbb R^N} \Big|  \overline{\mathscr B}_2 (z) \Big| dz \Big) \mathcal A \Gamma \left( \frac{3}{2} \right)}{\Gamma \left( 1+\frac{\al}{2} \right)}  \sup_{(t,x) \in \mathscr S_0} |v_0(t,x)| \int_0^t (t-s)^{\al/2-1}     ds \nn\\
& \le  \frac{ \Big( \int_{\mathbb R^N} \Big|  \overline{\mathscr B}_2 (z) \Big| dz \Big) \mathcal A \Gamma \left( \frac{3}{2} \right)}{\Gamma \left( 1+\frac{\al}{2} \right)}   \frac{2t^{\al/2}}{\al} 2 \mathcal E  \nn\\
&\le    \frac{ 4\Big( \int_{\mathbb R^N} \Big|  \overline{\mathscr B}_2 (z) \Big| dz \Big) \mathcal A \Gamma \left( \frac{3}{2} \right) |T_0|^{\al/2} \mathcal E}{\al \Gamma \left( 1+\frac{\al}{2} \right)}    \le   \mathcal E.
\end{align}
	Estimates \eqref{b11} and \eqref{b12} yield that 
	\begin{align*}
		\sup_{(t,x) \in \mathscr S_0} |v_1(t,x)|  &\le \Big\|\mathbb T_{1,\al} (t) u_0\Big\|_{L^\infty}+\Bigg\|\int_0^t \Delta  \mathbb T_{2,\al} (t-\tau) F(v_{0}(\tau,x))ds\Bigg\|_{L^\infty}\le  \mathcal E+ \mathcal E=  2\mathcal E,
	\end{align*}
	where $T_0$ is given by \eqref{b1}. 
	Let us assume that 
	$
	\sup_{(t,x) \in \mathscr S_0} |v_n(t,x)|  \le  2\mathcal E,~~n \ge 1.
	$
	Then for any $(t,x) \in \mathscr S_0$, it follows
	\begin{align*}
		|v_{n+1}(t,x)|  &\le \Bigg\| \int_{\mathbb R^N}\mathbb K_{1,\al} (t,x-y) u_0(y)dy\Bigg\|_{L^\infty}+ \Bigg\|\int_0^t  \int_{\mathbb R^N}  \Delta  \mathbb K_{2,\al} (t,x-y) F(v_{n}(\tau,y)) dy d\tau  \Bigg\|_{L^\infty}\nn\\
		&\le \big\|  \mathbb K_{1,\al} (t,x) \big\|_{L^1(\mathbb R^N)}  \|   u_0 \|_{L^\infty}+ \int_0^t  \big\|  \Delta  \mathbb K_{2,\al} (t,x)  \big\|_{L^1(\mathbb R^N)}  \big\|  F(v_{n}(\tau,x)) \big\|_{L^\infty}   d\tau \nn\\
		& \le \mathcal E+ \frac{ \Big( \int_{\mathbb R^N} \Big|  \overline{\mathscr B}_2 (z) \Big| dz \Big) \mathcal A \Gamma \left( \frac{3}{2} \right)}{\Gamma \left( 1+\frac{\al}{2} \right)}  \sup_{(t,x) \in \mathscr S_0} |v_n(t,x)| \int_0^t (t-\tau)^{\al/2-1}     d\tau  \nn\\
		&\le \mathcal E+ \frac{ 4\Big( \int_{\mathbb R^N} \Big|  \overline{\mathscr B}_2 (z) \Big| dz \Big) \mathcal A \Gamma \left( \frac{3}{2} \right) |T_0|^{\al/2} \mathcal E}{\al \Gamma \left( 1+\frac{\al}{2} \right)} \le 2  \mathcal E,
	\end{align*}
	where it follows from \eqref{b1} that  $
	\frac{ 4\Big( \int_{\mathbb R^N} \Big|  \overline{\mathscr B}_2 (z) \Big| dz \Big) \mathcal A \Gamma \left( \frac{3}{2} \right) |T_0|^{\al/2} }{\al \Gamma \left( 1+\frac{\al}{2} \right)}   \le 1.$
	The latter inequality is true for $j=n$ and, by induction, we deduce that \eqref{qt11} holds for  any $j \ge 0$. \\
	In the following,  we  show that  the following estimate holds for $j \ge 0$
	\begin{align} \label{important1}
		\sup_{x \in \mathbb R^N}	|v_{j+1}(t,x)- v_{j}(t,x)|   \le \frac{  (\mathcal C_\al t^{\frac{\al}{2}})^{j+1}}{\Gamma (\frac{\al j}{2}+\frac{\al}{2}+1)}  ,
	\end{align}
	where $ \mathcal C_\al$ is given by 
	\begin{equation} \label{calpha}
		\mathcal C_\al = 4 \Big( \int_{\mathbb R^N} \Big|  \overline{\mathscr B}_2 (z) \Big| dz \Big) \mathcal A \Gamma \left( \frac{3}{2} \right) \max \left( \frac{\mathcal E}{\al},~~\frac{\Gamma(\al/2)}{2\Gamma(\al/2+1)} \right). 
	\end{equation}
	Indeed, for $j=0$, using \eqref{K22} and \eqref{condF}, we find that
	\begin{align*}
		\sup_{x \in \mathbb R^N}	\Big|v_{1}(t,x)- v_{0}(t,x)\Big |  &\le \Bigg\|\int_0^t \Delta  \mathbb K_{2,\al} (t-\tau) F(v_{0}(\tau,x))ds\Bigg\|_{L^\infty}\nn\\
		&\le \int_0^t  \|  \Delta  \mathbb K_{2,\al} (t-\tau,x)  \|_{L^1(\mathbb R^N)}  \|  F(v_{0}(\tau,x)) \|_{L^\infty}   d\tau \nn\\
		&\le  \frac{ 2 \mathcal E\Big( \int_{\mathbb R^N} \Big|  \overline{\mathscr B}_2 (z) \Big| dz \Big) \mathcal A \Gamma \left( \frac{3}{2} \right)}{\Gamma \left( 1+\frac{\al}{2} \right)}  \int_0^t (t-\tau)^{\al/2-1}     d\tau\nn\\
		& =  \frac{ 4 \mathcal E t^{\al/2}\Big( \int_{\mathbb R^N} \Big|  \overline{\mathscr B}_2 (z) \Big| dz \Big) \mathcal A \Gamma \left( \frac{3}{2} \right)}{\al \Gamma \left( 1+\frac{\al}{2} \right)}   \le \frac{t^{\al/2} \mathcal C_\al }{\Gamma(\al/2+1)} ,
	\end{align*}
	which means that \eqref{important1} holds for $j=0$. Suppose that \eqref{important1}  holds for
	$j \le n-1$,  where $n \ge 2$ is a positive integer. From \eqref{iteration}, we find that
	
	\begin{align} \label{g1}
		\sup_{x \in \mathbb R^N} \Big|v_{n}(t,x)- v_{n-1}(t,x)\Big| &\le \Bigg\|\int_0^t \Delta  \mathbb K_{2,\al} (t-\tau) \Big( F(v_{n-1}(\tau,x)- F(v_{n-2}(s,x) \Big)  d\tau \Bigg\|_{L^\infty}\nn\\
		&\le \int_0^t  \Big\|  \Delta  \mathbb K_{2,\al} (t-\tau,x)  \Big\|_{L^1(\mathbb R^N)}  \Big\| F(v_{n-1}(\tau,x)- F(v_{n-2}(\tau,x)\Big\|_{L^\infty}   d\tau \nn\\
		&  \le  \frac{ 2 \mathcal E\Big( \int_{\mathbb R^N} \Big|  \overline{\mathscr B}_2 (z) \Big| dz \Big) \mathcal A \Gamma \left( \frac{3}{2} \right)}{\Gamma \left( 1+\frac{\al}{2} \right)}      \int_0^t (t-\tau)^{\al/2-1}  \|v_{n-1}(\tau,.)- v_{n-2}(\tau,.)\|_{L^\infty}   d\tau \nn\\
		&    \le \frac{ 2 \mathcal E\Big( \int_{\mathbb R^N} \Big|  \overline{\mathscr B}_2 (z) \Big| dz \Big) \mathcal A \Gamma \left( \frac{3}{2} \right)}{\Gamma \left( 1+\frac{\al}{2} \right)}      \int_0^t  \frac{ (t-\tau)^{\al/2-1} (\mathcal C_\al t^{\frac{\al}{2}})^{n-1}}{\Gamma (\frac{\al n}{2}-\frac{\al}{2}+1)}   d\tau.
	\end{align}
	Noting that $\Gamma(\al) \le 1$,  we observe the following:
	\begin{align*}
		\underline{ \text  {\it The RHS of \eqref{g1}}} &= \frac{ 2 \mathcal E\Big( \int_{\mathbb R^N} \Big|  \overline{\mathscr B}_2 (z) \Big| dz \Big) \mathcal A \Gamma \left( \frac{3}{2} \right)}{\Gamma \left( 1+\frac{\al}{2} \right)}   \frac{  (\mathcal C_\al)^{n-1}}{\Gamma (\frac{\al n}{2}-\frac{\al}{2}+1)}   \int_0^t (t-\tau)^{\al/2-1}  \tau^{\frac{\al n-\al}{2}} d\tau \nn\\
		&= \frac{ 2 \mathcal E\Big( \int_{\mathbb R^N} \Big|  \overline{\mathscr B}_2 (z) \Big| dz \Big) \mathcal A \Gamma \left( \frac{3}{2} \right)}{\Gamma \left( 1+\frac{\al}{2} \right)}   \frac{  (\mathcal C_\al)^{n-1}}{\Gamma (\frac{\al n}{2}-\frac{\al}{2}+1)}  t^{\frac{\al n}{2}} \frac{\Gamma(\al/2)\Gamma(\frac{\al n-\al }{2}+1)}{\Gamma(\frac{\al n}{2}+1)}\nn\\
		& \le \frac{ 2 \Gamma(\al/2) \mathcal E\Big( \int_{\mathbb R^N} \Big|  \overline{\mathscr B}_2 (z) \Big| dz \Big) \mathcal A \Gamma \left( \frac{3}{2} \right)}{\Gamma \left( 1+\frac{\al}{2} \right)}   \frac{ (\mathcal C_\al )^{n-1}}{\Gamma(\frac{\al n}{2}+1)}   t^{\frac{\al n}{2}} ~~\le~~  \frac{ (\mathcal C_\al t^{\al/2} )^{n}}{\Gamma(\frac{\al n}{2}+1)} ,
	\end{align*}
	where in the last inequality, we have used from \eqref{calpha} that  $$\frac{ 2 \Gamma(\al/2) \mathcal E\Big( \int_{\mathbb R^N} \Big|  \overline{\mathscr B}_2 (z) \Big| dz \Big) \mathcal A \Gamma \left( \frac{3}{2} \right)}{\Gamma \left( 1+\frac{\al}{2} \right)}    \le \mathcal C_\al.$$
	By the induction method, we derive that the estimate
	\eqref{important1} holds for any $j\ge 1.$
	It  follows from  \eqref{important1} that
	\begin{align} \label{hold}
		\opnorm{v_{j+1}-v_j}_{\mathscr P_{ T_0}} &= \sup_{(t,x) \in \mathscr P_{ T_0} }	\Big|v_{j+1}(t,x)- v_{j}(t,x)\Big|\le  \sup_{0 \le t \le  T_0}\frac{  (\mathcal C_\al t^{\frac{\al}{2}})^{j+1}}{\Gamma (\frac{\al j}{2}+\frac{\al}{2}+1)} =\frac{  (\mathcal C_\al T_0^{\frac{\al}{2}})^{j+1}}{\Gamma (\frac{\al j}{2}+\frac{\al}{2}+1)}.
	\end{align}
	Since \eqref{hold}, we deduce that for $m> n$
	\begin{align} \label{c1}
		\opnorm{v_{m}-v_n}_{\mathscr P_{ T_0}} &= \sup_{(t,x) \in \mathscr P_{ T_0} }	\Big|v_{m}(t,x)- v_{n}(t,x)\Big|\le \sum_{j=n}^m \big\|v_{j+1}(t,x)- v_{j}(t,x)\big\|_{L^\infty}\le \sum_{j=n}^m  \frac{  (\mathcal C_\al T_0^{\frac{\al}{2}})^{j+1}}{\Gamma (\frac{\al j}{2}+\frac{\al}{2}+1)} .  
	\end{align}
	In the next step,	we claim that the infinite sum $\sum_{j=1}^\infty \frac{  (\mathcal C_\al T_0^{\frac{\al}{2}})^{j+1}}{\Gamma (\frac{\al j}{2}+\frac{\al}{2}+1)}   $ is convergent. 
	Due to the definition of $T_0$ as in \eqref{b1}, we find  that
	$
	T_0^{\al/2} \le   \frac{\al \Gamma(\al/2+1)} {4\Big( \int_{\mathbb R^N} \Big|  \overline{\mathscr B}_2 (z) \Big| dz \Big) \mathcal A \Gamma \left( \frac{3}{2}  \right)\Gamma(\al/2) } .
	$
	By relying on \eqref{calpha}, we can easily achieve that
	\begin{align*}
		(\mathcal C_\al T_0^{\al/2})^{j+1} \le \left( 4 \Big( \int_{\mathbb R^N} \Big|  \overline{\mathscr B}_2 (z) \Big| dz \Big) \mathcal A \Gamma \left( \frac{3}{2} \right)  \frac{\Gamma(\al/2)}{2\Gamma(\al/2+1)}  T_0^{\al/2} \right)^{j+1} \le \left( \frac{\alpha}{2} \right)^{j+1}.
	\end{align*}
	Since $\al>1/2$, we know that $\al j+\al+1 >2$ for $j \ge 1$. Due to the fact that the function $\Gamma(z)$ is  increasing  for $ z>2$, we find  that $\Gamma (\al j+\al+1) > \Gamma (2\al+1)$. It follows from \eqref{c1} that for $m > n \ge \overline{M}$
	\begin{align*}
		\big\|v_{m}(t,x)- v_{n}(t,x)\big\|_{L^\infty}  &\le \frac{1}{\Gamma (2\al+1)} \sum_{j=n}^m \left( \frac{\alpha}{2} \right)^{j+1}\le \frac{1}{\Gamma (2\al+1)} \sum_{j=\overline{M}}^\infty \left( \frac{\alpha}{2} \right)^{j+1} \\
		&\le \frac{2}{(2-\al) \Gamma (2\al+1)}\left(\frac{\alpha}{2} \right)^{\overline{M}+1}.
	\end{align*}
	Now, given any $\ep>0$, we can pick $\overline{M}$,  depending on $\ep$, such that
	$
	\frac{2}{(2-\al) \Gamma (2\al+1)}\left(\frac{\alpha}{2} \right)^{\overline{M}+1} < \ep.
	$
	Some of above observations allow us to conclude that the sequence $\{v_n\}$ is a Cauchy one in the space $L^\infty(\mathbb R^N)$. Therefore, there exists a function $v(t,x)$ which is the limitation of the sequence $\{v_n\}$ on the strip $	\mathscr P_{ T_0}$. It is obvious to see that $v$ is a continuous solution of the integral equation \eqref{int1} on the strip $	\mathscr P_{ T_0}$.
	Next, we examine the regularity  of the solution  $u$. We only need to derive the following estimation. 
	For each $1\le k \le L,~~n \ge 1$, there exists $\mathscr Q_k$ which is a continuous increasing function  of $t-t_k$ such that,
	for each $0< t_1 < t_2<...<t_N <t  \le  T_0$,  the following estimate holds true
	\begin{align*}
		\|D^k v_n(t,x)\|_{L^\infty} \le (t-t_k)^{-\frac{\al k}{4}} \mathcal Q_k (\mathcal E, \al, t_k-t_1, t-t_k), \quad \quad t_k < t \le T_0. 
	\end{align*}
	From the formula \eqref{iteration}, we find that
	\begin{align} \label{iteration1}
		D v_n(t,x)= D \mathbb Z_{1,\al} (t- t_1) v_n (t_1, x)+ \int_{t_1}^t D^3 \mathbb Z_{2,\al} (t-\tau) F(v_{n-1}(\tau,x))d\tau,`~~t_1 \le t \le T_0.
	\end{align}
	This implies that
	\begin{align} \label{c11}
	\sup_{x \in \mathbb R^N }	\Big| D v_{n}(t,x)\Big|&\le \sup_{x \in \mathbb R^N }	\Big| D \mathbb Z_{1,\al} (t- t_1) v_n (t_1, x) \Big| + \sup_{x \in \mathbb R^N }	\Bigg| \int_{t_1}^t D^3 \mathbb Z_{2,\al} (t-\tau) F(v_{n-1}(\tau,x))d\tau \Bigg| \nn\\
		&=\underbrace{ \sup_{x \in \mathbb R^N } \Big| D \Big(\mathbb K_{1,\al} (t-t_1,x) * v_n(t_1,x) \Big)  \Big|}_{(I)}+ \underbrace{  \sup_{x \in \mathbb R^N } \int_{t_1}^t \left| D^3 \mathbb Z_{2,\al} (t-\tau) F(v_{n-1}(\tau,x))  \right| d\tau}_{(II)}.
	\end{align}
	Using the fact that $
	\frac{d}{dx} \Big(f(x)*g(x) \Big)= \Big( \frac{d}{dx}f(x) \Big)* g(x)
	$ and thanks to Lemma \eqref{lemqt},  the term $(I)$ is bounded by 
	\begin{align} \label{g22}
		(I)&= \sup_{x \in \mathbb R^N }  \Big| D \Big(\mathbb K_{1,\al} (t-t_1,x) * v_n(t_1,x) \Big)  \Big| =  \Big\|  D \mathbb K_{1,\al} (t-t_1, .) * v_n(t_1,.) \Big\|_{L^\infty} \nn\\
		&\le \|   D \mathbb K_{1,\al} (t-t_1, .)  \|_{L^1(\mathbb R^N)}  \|   v_n(t_1,.) \|_{L^\infty}~~\le~~ 2 \mathcal  E \mathscr C_{1,1} (\al, N) (t-t_1)^{-\frac{\al }{4}}  
	\end{align}
	Using  \eqref{bound} and the second part of  Lemma \eqref{lemqt} with $p=1, k=3$,  we find that the  term $(II)$ is bounded by
	\begin{align} \label{g2}
		(II) &\le  \int_{t_1}^t  \left\| D^3 \mathbb Z_{2,\al} (t-\tau) F(v_{n-1}(\tau,x))  \right\|_{L^\infty} d\tau\nn\\
		&\le   \int_{t_1}^t   \big\|  D^3 \mathbb K_{2,\al} (t-\tau,.)  \big\|_{L^1(\mathbb R^N)}  \|  F(v_{n-1}(\tau,.)) \|_{L^\infty}   d\tau \nn\\
		&\le \mathcal A \sup_{(t,x) \in [0, T_0] \times \mathbb R^N }    |v_{n-1}(t,x)| \int_{t_1}^t   \Big\|  D^3 \mathbb K_{2,\al} (t-\tau,.)  \Big\|_{L^1(\mathbb R^N)}    d\tau\nn\\
		&\le   2 \mathcal E \mathcal A  \overline{ \mathscr C_{3,1}} (\al, N)  \int_{t_1}^t  (t-\tau)^{\al-\frac{\al N}{4}-1+\frac{\al N}{4}-\frac{3\al }{4}}  d\tau~=~ \frac{ 8\mathcal E \mathcal A  \overline{ \mathscr C_{3,1}} (\al, N) }{\al} (t-t_1)^{\frac{\al}{4}}. 
	\end{align}
	Combining \eqref{c11}, \eqref{g22} and \eqref{g2}, we deduce that there exists $\mathscr Q_1$ which is a continuous increasing function  of $t-t_1$ such that
	\begin{align}  \label{j1}
		\sup_{x \in \mathbb R^N }	\Big| D v_{n}(t,x)\Big| \le (t-t_1)^{\frac{-\al}{4}} \mathscr Q_1 (\mathcal E, \al,  t-t_1). 
	\end{align} 
	From the formula \eqref{iteration}, we find that
	\begin{align} \label{iteration2}
		D^2 v_n(t,x)= D^2 \mathbb Z_{1,\al} (t- t_2) v_n (t_1, x)+ \int_{t_2}^t D^4 \mathbb Z_{2,\al} (t-\tau) F(v_{n-1}(\tau,x))d\tau,`~~t_2 \le t \le T_0.
	\end{align}	
	By a similar argument as above, we find that
	\begin{align} \label{c111}
		\sup_{x \in \mathbb R^N }	\Big| D^2 v_{n}(t,x)\Big|&\le  \sup_{x \in \mathbb R^N }	\Big| D^2 \mathbb Z_{1,\al} (t- t_2) v_n (t_2, x) \Big| + \sup_{x \in \mathbb R^N }	\Bigg| \int_{t_2}^t D^4 \mathbb Z_{2,\al} (t-\tau) F(v_{n-1}(\tau,x))d\tau \Bigg| \nn\\
		&\nn\le   \Big\|   D^2 \mathbb K_{1,\al} (t-t_2, .)  \Big\|_{L^1(\mathbb R^N)}  \|   v_n(t_2,.) \|_{L^\infty} \nn\\
		&+  \int_{t_2}^t   \big\|  D^3 \mathbb K_{2,\al} (t-\tau,.)  \big\|_{L^1(\mathbb R^N)}  \|  DF(v_{n-1}(\tau,.)) \|_{L^\infty}   d\tau \nn\\
		&\nn\le  2 \mathcal  E \mathscr C_{2,1} (\al, N) (t-t_2)^{-\frac{\al }{2}}  \nn\\
		&+  (12 \mathcal E^2+1) \overline{ \mathscr C_{3,1}} (\al, N) \int_{t_2}^t \mathscr Q_1 (\mathcal E, \al,  \tau -t_1)  (\tau -t_1)^{\frac{-\al}{4}} (t-\tau)^{\frac{\al}{4}-1} d\tau,
	\end{align}
	where it follows from \eqref{j1} that
	\begin{align*}
		\big\|  DF(v_{n-1}(\tau,.)) \big\|_{L^\infty} &\le \|3 v_{n-1}^2-1\|_{L^\infty} \|  D(v_{n-1}(\tau,.)) \|_{L^\infty}\le (12 \mathcal E^2+1)  (\tau -t_1)^{\frac{-\al}{4}} \mathscr Q_1 (\mathcal E, \al,  \tau -t_1).
	\end{align*}
	Now, we handle the integral term on the right hand side of the expression \eqref{c111}. It is noted that $(\tau -t_1)^{\frac{-\al}{4}}  \le (t_2 -t_1)^{\frac{-\al}{4}} $ for any $\tau \ge t_2$, and we find that
	\begin{align*}
		\int_{t_2}^t \mathscr Q_1 (\mathcal E, \al,  \tau -t_1)  (\tau -t_1)^{\frac{-\al}{4}} (t-\tau)^{\frac{\al}{4}-1} d\tau \le \mathscr Q_1 (\mathcal E, \al,  t_2 -t_1)(t_2 -t_1)^{\frac{-\al}{4}} \int_{t_2}^t (t-\tau)^{\frac{\al}{4}-1} d\tau
	\end{align*}
	where we recall that $\mathscr Q_1$  is a continuous increasing function  of $t-t_1$. Therefore, it follows from the latter above estimate that
	\begin{align} \label{c112}
		\int_{t_2}^t \mathscr Q_1 (\mathcal E, \al,  \tau -t_1)  (\tau -t_1)^{\frac{-\al}{4}} (t-\tau)^{\frac{\al}{4}-1} d\tau \le \frac{4\mathscr Q_1 (\mathcal E, \al,  t_2 -t_1)(t_2 -t_1)^{\frac{-\al}{4}} }{\al } (t-t_2)^{\al/4}.
	\end{align}
	Combining \eqref{c111} and \eqref{c112}, we arrive at
	\begin{align*}
		\sup_{x \in \mathbb R^N }	\Big| D^2 v_{n}(t,x)\Big| \le \mathscr Q_2 (\mathcal E, \al,  t_2 -t_1, t-t_2)  (t-t_2)^{-\frac{\al }{2}},
	\end{align*}
	where
	\begin{equation*}
		\mathscr Q_2 (\mathcal E, \al,  t_2 -t_1, t-t_2) =2 \mathcal  E \mathscr C_{2,1} (\al, N)+ \frac{4\mathscr Q_1 (\mathcal E, \al,  t_2 -t_1)(t_2 -t_1)^{\frac{-\al}{4}} }{\al } (t-t_2)^{3\al/4}.
	\end{equation*}
	It is easy to verify that $\mathscr Q_2$ as above is a continuous increasing function  of $t-t_2$.
	By a similar way as above, we can verify that $\mathscr Q_k$ as above is a continuous increasing function  of $t-t_k$ for any $k $ is a natural number such that $k \ge 2$. This completes our proof. 
\end{proof}

\section{Appendix}
\begin{definition}\label{Def of Gamma,Wright}
	Let $ \alpha $ be a complex number  whose real part is positive. The Gamma function can be formulated as
$
		\Gamma(\alpha)=\int_{0}^{\infty}\frac{x^{\alpha-1}}{e^x}dx,
$
	and the M-Wright type function $\mathcal{M}_{\alpha}$ is given by
$
		\mathcal{M}_{\alpha}(z)=\sumuse\frac{(-z)^m}{m!\Gamma\sqrbrackets{-\alpha m+(1-\alpha)}}.
$
\end{definition}
\begin{lemma}{(See \cite[Proposition 2]{Planas} or \cite[Appendix F]{Mainardi's Book})}\label{lem21}
Let $\al \in (0,1)$ and $\theta>-1$. Then, the following properties holds
\begin{align}
\mathcal M_\al(\nu) \ge 0, ~~ \forall \nu \ge 0, \quad \textrm{and} \quad \int_0^\infty \nu^\theta \mathcal M_\al(\nu) d\nu = \frac{\Gamma(\theta+1)}{\Gamma(\theta\alpha+1)}, ~~ \forall \theta>-1.\label{PhiProp}
\end{align}
{\begin{proposition}
		Let $ a,b>0 $ and $ q\ge1 $. Then the following inequality holds
		\begin{align*}
			(a+b)^p\le 2^{q-1}\left(a^q+b^q\right).
		\end{align*}
	\end{proposition}
		\begin{proof}
			From the fact that $ q(q-1)x^{q-2}\ge0 $ for any $ x>0 $ and $ q\ge1 $, we assert that the one variable function $ f(x)=x^q,~q>0 $ is a convex function. It follows that
			\begin{align*}
				f\left(\frac{\sum_{k=1}^{M}a_k}{M}\right)\le\frac{\sum_{k=1}^{M}f(a_k)}{M}.
			\end{align*}
			This one gives us the desired result.
		\end{proof}}
\end{lemma}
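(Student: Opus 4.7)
The plan is to prove non-negativity and the moment formula separately, using only the series definition of $\mathcal{M}_\alpha$ together with Hankel's representation of the reciprocal Gamma function. The backbone of both parts is the contour representation
\[
\mathcal{M}_\alpha(\nu) = \frac{1}{2\pi i}\int_{Ha} e^{\sigma - \nu \sigma^{\alpha}}\sigma^{\alpha - 1}\,d\sigma,
\]
where $Ha$ is a Hankel contour coming from $-\infty$ below the real axis, encircling the origin, and returning to $-\infty$ above. I would derive this by inserting Hankel's identity $\Gamma(z)^{-1} = (2\pi i)^{-1}\int_{Ha} e^{\sigma}\sigma^{-z}\,d\sigma$ into each coefficient of the defining series $\sum_m (-\nu)^m/[m!\,\Gamma(1-\alpha(m+1)+\alpha)]$ and swapping sum and contour integral; the swap is legitimate because the resulting series $\sum_m (-\nu \sigma^{\alpha})^m/m!$ sums to $e^{-\nu \sigma^{\alpha}}$ uniformly on compact arcs of $Ha$, while the exponential damping from $\mathrm{Re}\,\sigma \to -\infty$ controls the tails.

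For the moment formula I would then substitute this representation into $\int_0^{\infty}\nu^{\theta}\mathcal{M}_\alpha(\nu)\,d\nu$, justify Fubini by the joint integrability coming from $\mathrm{Re}\,\sigma^{\alpha} > 0$ on the relevant portion of $Ha$, and evaluate the inner integral via the elementary $\int_0^{\infty}\nu^{\theta}e^{-\nu \sigma^{\alpha}}\,d\nu = \Gamma(\theta+1)\sigma^{-\alpha(\theta+1)}$, valid for $\theta > -1$. What remains is
\[
\frac{\Gamma(\theta+1)}{2\pi i}\int_{Ha} e^{\sigma}\sigma^{\alpha - 1 - \alpha(\theta+1)}\,d\sigma = \frac{\Gamma(\theta+1)}{2\pi i}\int_{Ha} e^{\sigma}\sigma^{-(\alpha \theta + 1)}\,d\sigma,
\]
which is again a Hankel integral, so a second application of Hankel's formula collapses it to $\Gamma(\theta+1)/\Gamma(\alpha\theta + 1)$, as required.

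For non-negativity I would deform $Ha$ to hug the negative real axis: parametrizing the two rays as $\sigma = \rho e^{\pm i\pi}$ with $\rho > 0$ and letting the circular piece at the origin shrink, the imaginary contributions from the two rays cancel and one obtains a real representation of the form $\mathcal{M}_\alpha(\nu) = \pi^{-1}\int_0^{\infty} e^{-\rho}\rho^{\alpha - 1}\sin\bigl(\alpha\pi - \nu \rho^{\alpha}\sin(\alpha\pi)\bigr)\,d\rho$ (up to a routine sign bookkeeping I would double-check). A cleaner alternative, which I would fall back on if the sign bookkeeping becomes opaque, is to appeal to the fact that $\mathcal{M}_\alpha$ is the probability density of $t^{-\alpha}$ times a one-sided $\alpha$-stable random variable, a characterization one reads off by Laplace-inverting $E_\alpha(-s) = \int_0^{\infty}e^{-s\nu}\mathcal{M}_\alpha(\nu)\,d\nu$; non-negativity is then tautological.

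The main obstacle is the double interchange of limits in the derivation of the Hankel representation: the numerical coefficients $1/\Gamma(1-\alpha - \alpha m)$ grow factorially (by the reflection formula), so the original series is not absolutely convergent in a naively useful sense, and one really does need to split $Ha$ into a compact piece plus two exponentially damped rays before applying dominated convergence. Once that single technical point is pinned down, the moment identity is essentially algebraic and non-negativity follows either from the contour or from the stable-distribution identification.
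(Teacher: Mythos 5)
First, note that the paper does not actually prove this lemma: it is imported verbatim from the cited references (de Carvalho-Neto--Planas, Prop.~2, and Mainardi's Appendix~F), and the only argument supplied in the statement block is the short convexity proof of the auxiliary inequality $(a+b)^q\le 2^{q-1}(a^q+b^q)$, which your proposal does not touch. Your derivation of the moment identity is correct and is essentially the argument of the cited Appendix~F: insert the Hankel formula into the series to get $\mathcal M_\al(\nu)=\frac{1}{2\pi i}\int_{Ha}e^{\sigma-\nu\sigma^{\al}}\sigma^{\al-1}\,d\sigma$, integrate in $\nu$, and collapse the remaining Hankel integral to $1/\Gamma(\al\theta+1)$. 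Two small cautions there: the Gamma argument in the series you quote should be $\Gamma\left(1-\al(m+1)\right)=\Gamma(1-\al-\al m)$, not $\Gamma(1-\al(m+1)+\al)$ (your contour formula corresponds to the former, so this is only a transcription slip); and the Fubini step genuinely requires choosing the Hankel rays at an angle $\phi$ with $\pi/2<\phi<\min\{\pi,\pi/(2\al)\}$, since on the rays at angle $\pi$ one has $\operatorname{Re}\sigma^{\al}=|\sigma|^{\al}\cos(\al\pi)<0$ for $\al>1/2$ and the inner $\nu$-integral diverges. Such a choice exists for every $\al\in(0,1)$, but it is incompatible with the negative-real-axis deformation you use later, so ``the relevant portion of $Ha$'' is hiding a real constraint.

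The genuine gap is in the non-negativity half. The real representation obtained by collapsing $Ha$ onto the negative real axis is $\mathcal M_\al(\nu)=\pi^{-1}\int_0^{\infty}e^{-\rho-\nu\rho^{\al}\cos(\al\pi)}\rho^{\al-1}\sin\left(\al\pi-\nu\rho^{\al}\sin(\al\pi)\right)d\rho$, and the sine factor changes sign as $\rho$ runs over $(0,\infty)$ once $\nu\rho^{\al}\sin(\al\pi)$ exceeds $\al\pi$; the integrand is not sign-definite, so no amount of bookkeeping extracts $\mathcal M_\al\ge 0$ from this formula. Your fallback --- identifying $\mathcal M_\al$ with a rescaled one-sided $\al$-stable density via $E_\al(-s)=\int_0^{\infty}e^{-s\nu}\mathcal M_\al(\nu)\,d\nu$ --- does give the result, but it is not tautological: it presupposes either that $E_\al(-s)$ is completely monotone (Pollard's theorem, itself nontrivial) so that Bernstein's theorem applies, or an independent construction of the stable subordinator whose density is known to be non-negative. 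Since the whole point of this half of the lemma is that non-negativity, appealing to the stable law without one of those inputs is circular. To close the proof you should either cite Pollard and Bernstein explicitly, or reproduce one of the standard positivity arguments (e.g.\ the Feller/Mikusi\'nski contour argument for the one-sided stable density). As it stands, the moment formula is proved and the non-negativity is asserted rather than established.
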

\begin{lemma}(Young's convolution inequality) Let $ p,q,r\in[1,\infty] $ such that
\begin{align*}
1+\frac{1}{r}=\frac{1}{p}+\frac{1}{q}.
\end{align*} 
Then, the inequality 
$
\norm{u*v}_{L^r}\le\norm{u}_{L^p}\norm{v}_{L^q}
$
holds for every $ u\in\Lp $ and $ v\in\Lq $.
\end{lemma}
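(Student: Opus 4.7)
The plan is to prove Young's convolution inequality by a three-way factorization of the integrand followed by the generalized H\"older inequality.

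First I would dispose of the degenerate endpoints. When $r=\infty$, the hypothesis forces $1/p+1/q=1$ and the inequality reduces to a pointwise application of the two-function H\"older inequality to $(u*v)(x)=\int_{\R^N}u(y)v(x-y)dy$, which immediately yields $|(u*v)(x)|\leq\|u\|_{L^p}\|v\|_{L^q}$ uniformly in $x$. When $p=1$ (symmetrically $q=1$) one has $r=q$, and the conclusion follows from Minkowski's inequality for integrals, or equivalently from Fubini applied to $\bigl\|\int|u(y)|\,|v(\cdot-y)|\,dy\bigr\|_{L^q}$. From here on I assume $1<p,q,r<\infty$; the constraint $1+1/r=1/p+1/q$ together with $p,q\geq 1$ then gives $p\leq r$ and $q\leq r$, so the exponents that appear below are non-negative.

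The main step is the algebraic splitting
$|u(y)v(x-y)|=\bigl(|u(y)|^{p}|v(x-y)|^{q}\bigr)^{1/r}\,|u(y)|^{1-p/r}\,|v(x-y)|^{1-q/r}$.
I would then apply H\"older's inequality with the three exponents $r$, $a:=pr/(r-p)$ and $b:=qr/(r-q)$. A short arithmetic check, using $1/p+1/q=1+1/r$, confirms that $1/r+1/a+1/b=1$, so the three-function H\"older inequality applies to the $y$-integral defining $(u*v)(x)$. Since $(1-p/r)a=p$ and $(1-q/r)b=q$, the middle and right factors integrate to $\|u\|_{L^p}^{p/a}=\|u\|_{L^p}^{1-p/r}$ and $\|v\|_{L^q}^{1-q/r}$ respectively. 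Raising to the $r$-th power yields
$|(u*v)(x)|^r\leq\|u\|_{L^p}^{r-p}\,\|v\|_{L^q}^{r-q}\int_{\R^N}|u(y)|^{p}|v(x-y)|^{q}\,dy$.

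To finish, I would integrate in $x$ and apply Fubini together with the translation invariance of Lebesgue measure: $\int_{\R^N}\int_{\R^N}|u(y)|^{p}|v(x-y)|^{q}\,dy\,dx=\|u\|_{L^p}^{p}\|v\|_{L^q}^{q}$. Combining with the previous display and taking the $r$-th root produces the claim. The only delicate point is the bookkeeping in the middle paragraph: one must verify simultaneously that the chosen dual exponents satisfy $1/r+1/a+1/b=1$ and that the induced powers recover exactly $\|u\|_{L^p}^{r-p}$ and $\|v\|_{L^q}^{r-q}$ after H\"older. Once that algebraic lineup is confirmed, no further obstacle remains.
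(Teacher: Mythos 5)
Your proof is correct: the three-way factorization $|u(y)v(x-y)|=\bigl(|u(y)|^{p}|v(x-y)|^{q}\bigr)^{1/r}|u(y)|^{1-p/r}|v(x-y)|^{1-q/r}$, the generalized H\"older inequality with exponents $r$, $pr/(r-p)$, $qr/(r-q)$, and the final Fubini/translation-invariance step all check out, and the endpoint cases $r=\infty$ and $p=1$ (or $q=1$) are correctly reduced to H\"older and Minkowski respectively. Note that the paper states this lemma in its appendix as a classical fact and gives no proof at all, so there is nothing to compare against; your argument is the standard textbook proof. One small point worth making explicit: after discarding the endpoints, the cases $p=r$ and $q=r$ (where $a$ or $b$ would be undefined) cannot occur, since $p=r$ forces $q=1$ via the scaling relation and is therefore already covered, so in the main case the inequalities $p<r$ and $q<r$ are strict and the dual exponents are genuinely finite.
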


\begin{lemma}
(Fractional Gr\"onwall inequality) Let $ m,n$ be positive constants and $ \zeta\in(0,1). $ Assume that function $ \p\in L^{\infty,*}(0,T] $ satisfies the following inequality
\begin{align*}
\p(t)\le m+n\int_{0}^{t}(t-\tau)^{\zeta-1}\p(r)dr,\quad\text{for all~}t\in(0,T],
\end{align*} 
then, the result below is satisfied
\begin{align*}
\p(t)\le mE_{\zeta,1}\parentheses{n\Gamma(\zeta)t^{\zeta}}.
\end{align*}
\end{lemma}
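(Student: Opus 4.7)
The plan is to prove this fractional Grönwall inequality via iterated substitution of the integral inequality into itself, which is the standard strategy for Henry-type estimates. Define the linear operator $T$ acting on nonnegative functions by
\begin{equation*}
(T\psi)(t) = n\int_0^t (t-\tau)^{\zeta-1}\psi(\tau)\,d\tau,
\end{equation*}
so that the hypothesis reads $\varphi \le m + T\varphi$. Substituting the inequality into itself $N$ times will yield
\begin{equation*}
\varphi(t) \le \sum_{k=0}^{N-1} (T^k m)(t) + (T^N\varphi)(t),
\end{equation*}
where $T^0 m = m$ is viewed as a constant function. The plan then reduces to two tasks: compute $T^k m$ explicitly and show that the remainder $T^N \varphi$ vanishes as $N \to \infty$.

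For the first task, I would argue by induction on $k$ that
\begin{equation*}
(T^k m)(t) = \frac{m\,\bigl(n\Gamma(\zeta)\bigr)^k\, t^{k\zeta}}{\Gamma(k\zeta + 1)}.
\end{equation*}
The base case $k=0$ is trivial, and the inductive step relies on the Beta-function identity
\begin{equation*}
\int_0^t (t-\tau)^{\zeta-1}\tau^{k\zeta}\,d\tau = t^{(k+1)\zeta}\,\frac{\Gamma(\zeta)\Gamma(k\zeta+1)}{\Gamma((k+1)\zeta + 1)},
\end{equation*}
which causes the $\Gamma(k\zeta+1)$ in the denominator of the inductive hypothesis to cancel against the numerator coming from the Beta integral, leaving exactly $\Gamma((k+1)\zeta+1)$ in the denominator and producing the extra factor $n\Gamma(\zeta)t^\zeta$. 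Summing over $k$ then gives
\begin{equation*}
\sum_{k=0}^{\infty}(T^k m)(t) = m\sum_{k=0}^{\infty}\frac{\bigl(n\Gamma(\zeta)t^\zeta\bigr)^k}{\Gamma(k\zeta+1)} = m\,E_{\zeta,1}\!\bigl(n\Gamma(\zeta)t^\zeta\bigr),
\end{equation*}
which matches the desired right-hand side by the definition of the Mittag-Leffler function.

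For the second task (controlling the remainder), since $\varphi \in L^{\infty,*}(0,T]$ the quantity $M := \sup_{t\in(0,T]}\varphi(t)$ is finite, and the same induction as above gives the pointwise bound
\begin{equation*}
(T^N \varphi)(t) \le \frac{M\,\bigl(n\Gamma(\zeta)\bigr)^N t^{N\zeta}}{\Gamma(N\zeta+1)} \le \frac{M\,\bigl(n\Gamma(\zeta)T^\zeta\bigr)^N}{\Gamma(N\zeta+1)}.
\end{equation*}
Because $\Gamma(N\zeta+1)$ grows faster than any exponential in $N$ (Stirling), this remainder tends to $0$ as $N \to \infty$ uniformly on $(0,T]$. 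Letting $N \to \infty$ in the iterated inequality then yields $\varphi(t) \le mE_{\zeta,1}(n\Gamma(\zeta)t^\zeta)$.

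The main obstacle, if any, is purely bookkeeping: verifying the Beta-integral cancellation cleanly so the induction produces exactly $1/\Gamma(k\zeta+1)$ (and not a spurious $\zeta$-factor), and confirming that $\varphi \in L^{\infty,*}(0,T]$ is strong enough to make $T^N\varphi$ meaningful and uniformly bounded at each iteration stage; both are routine once one writes the Beta integral in terms of $\Gamma$.
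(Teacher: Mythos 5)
Your proof is correct: the monotone iteration of the positive operator $T$, the Beta-function induction yielding $(T^k m)(t)=m\bigl(n\Gamma(\zeta)\bigr)^k t^{k\zeta}/\Gamma(k\zeta+1)$, and the Stirling-type decay of $1/\Gamma(N\zeta+1)$ that kills the remainder $T^N\varphi$ constitute the standard and complete Henry-type argument for this inequality. The paper states this lemma in its appendix without any proof, so there is no in-paper argument to compare against; your write-up supplies exactly what is missing.
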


{\bf Acknowledgements.}  The authors would like to thank the anonymous referees for
their valuable comments leading to the improvement of our manuscript. This work was supported by Vietnam National Foundation for Science
and Technology Development (NAFOSTED) under grant number 101.02-2019.09. {The research  of T. C. has been partially supported by the Spanish Ministerio de Ciencia, Innovaci\'on y Universidades (MCIU), Agencia Estatal de Investigaci\'on (AEI) and Fondo Europeo de Desarrollo Regional (FEDER) under the project PGC2018-096540-B-I00, and by Junta de Andaluc\'{\i}a (Consejer\'{\i}a de Econom\'{\i}a y Conocimiento)   and FEDER under projects  US-1254251 and P18-FR-4509}.

\end{document}